\renewcommand{\d}{\mathrm d}
\newcommand{\op}{\mathrm{op}}
\newcommand{\ext}{\mathrm{ext}}
\newcommand{\R}{\mathbb R}
\newcommand{\wt}{\widetilde}
\newcommand{\wh}{\widehat}
\newcommand{\ol}{\overline}
\renewcommand{\Re}{\operatorname{Re}}
\newcommand{\Ai}{\operatorname{Ai}}
\newcommand{\Herm}{\operatorname{Herm}}
\newcommand{\Res}{\operatorname{Res}}
\newcommand{\id}{\mathbbm{1}}
\renewcommand{\O}{\mathcal{O}}
\renewcommand{\P}{\mathbf P}
\newcommand{\I}{\mathrm i}
\newtheorem{proposition}{Proposition}[section]
\newtheorem{theorem}[proposition]{Theorem}
\newtheorem{lemma}[proposition]{Lemma}
\newtheorem{conjecture}[proposition]{Conjecture}
\theoremstyle{definition}
\newtheorem{remark}[proposition]{Remark}
\numberwithin{equation}{section}
\author{Patrik L.\ Ferrari\thanks{Institute for Applied Mathematics, Bonn University, Endenicher Allee 60, 53115 Bonn,
Germany. E-mail: {\tt ferrari@uni-bonn.de}} \and
B\'alint Vet\H o\thanks{MTA\,--\,BME Stochastics Research Group and Budapest University of Technology and Economics, Egry J.\ u.\ 1, 1111 Budapest, Hungary. E-mail: {\tt vetob@math.bme.hu}}}
\title{The hard-edge tacnode process for Brownian motion}
\date{}
\begin{document}

\maketitle
\sloppy

\begin{abstract}
We consider $N$ non-intersecting Brownian bridges conditioned to stay below a fixed threshold.
We consider a scaling limit where the limit shape is tangential to the threshold.
In the large $N$ limit, we determine the limiting distribution of the top Brownian bridge conditioned to stay below a function as well as the limiting correlation kernel of the system.
It is a one-parameter family of processes which depends on the tuning of the threshold position on the natural fluctuation scale.
We also discuss the relation to the six-vertex model and to the Aztec diamond on restricted domains.
\end{abstract}

\section{Introduction}
Non-intersecting walks have appeared naturally in the descriptions of many physical systems as well as in mathematics.
To mention just a few examples, the polynuclear growth model (describing the growth of an interface) is based on the representation as non-intersecting random walks~\cite{PS02,Jo01},
the Aztec diamond (and similar combinatorial models of random tiling) has a similar mathematical description~\cite{Jo03,BF15},
Markov chains on Young diagrams related to the Plancherel measure~\cite{BO04,Bor11},
and the evolution of eigenvalues of random matrices as the GUE Dyson's Brownian motion~\cite{Dys62} can be expressed and analyzed as non-intersecting Brownian motions~\cite{EM97,FN98}.
The analysis was possible because of the determinantal structure of correlation functions~\cite{EM97,Bor98,RB04}.

In this paper, we study non-intersecting Brownian motions starting and ending at a fixed position with the extra constraint that they stay below a given threshold as illustrated in Figure~\ref{FigTacnode}.
\begin{figure}
\begin{center}
\includegraphics[height=6cm]{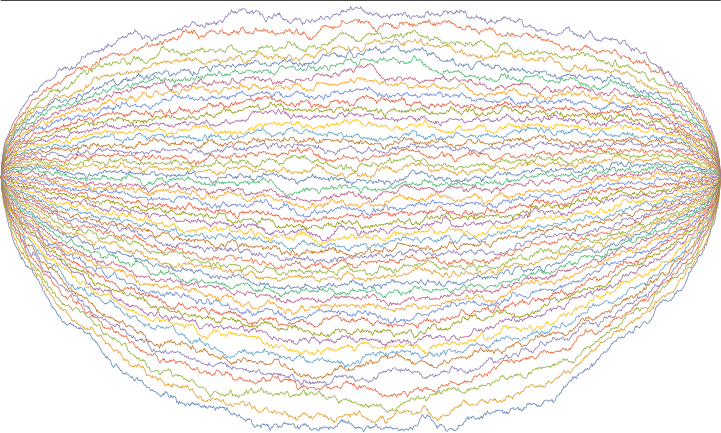}
\caption{Illustration of $N=50$ non-intersecting Brownian motions conditioned to stay below the black threshold.}
\label{FigTacnode}
\end{center}
\end{figure}
The motivation for these investigations is twofold:

\smallskip
(a) The six-vertex model with domain wall boundary conditions (DWBC) can be expressed as a system of non-intersecting line ensembles
(in discrete space and time)~\cite{FS06} with fixed starting and ending points.
In particular, at the free-fermion line, there is a mapping to the Aztec diamond~\cite{ZJ00} and thus by~\cite{Jo03}
we know that the border of the lines are described in the limit of large system by the Airy$_2$ process~\cite{PS02}.
Recent studies of limit shapes (not only for the free-fermion case) consider also geometries beyond the classical DWBC~\cite{CP13b,CP15,CPS16,CPS16b}.
This raised the natural question on the description of the limit process for the border of the line ensemble for $L$-shaped domains or
for pentagonal domains obtained from a square by removing a triangle at the corner.
Although we do not do the analysis for this discrete case, if the removed triangular piece is tangential to the the limit shape of the lines for the DWBC,
then under appropriate scaling, the limiting process should be exactly the one we study in this paper.
See Section~\ref{s:6vertex} for further discussions.

\smallskip
(b) Non-intersecting Brownian motions have attracted a lot of interest also because of their relations to the eigenvalues of Hermitian random matrices
subjected to Dyson's Brownian motion~\cite{AvMD08,TW03b,FN98,BFPSW09,KT04,KT07,TW06,KT07b,BK06}.
Discrete versions have been studied as \mbox{well~\cite{NKT02,Nag03,Jo02b,PS02,FS03}}.
More recently, the situation where the limit shape of two sets of non-intersecting Brownian motions just touch in a tacnode geometry has been studied,
first in a random walk setting in~\cite{FAvM10}, then via a $4\times 4$ Riemann--Hilbert problem~\cite{DKZ10} and with a more direct approach in~\cite{Joh10,FV11}.
The equality of the formulas for the correlation kernel of the tacnode process obtained in~\cite{DKZ10} and in~\cite{FV11} was verified directly in~\cite{D12}.
The tacnode was observed also in random tiling models~\cite{AJvM14,ACJvM13}.

The tacnode geometry occurs also if the non-intersecting trajectories are conditioned to stay positive and to start and end away from $0$ at a distance so that the limit shape becomes tangential to $0$.
This has been studied in~\cite{D13,DV14} for the case of non-intersecting squared Bessel processes.
Since Brownian motion conditioned to stay positive is a Bessel process of parameter $1/2$,
the kernel of the hard-edge tacnode process for non-intersecting Brownian motions can be obtained from~\cite{D13} in terms of the solution of a $4\times4$ Riemann--Hilbert problem.
However, finding explicit formulas for the tacnode limit process for Brownian motions conditioned to stay positive remained open
due to the fact that the hard-edge tacnode kernel was found in~\cite{DV14} explicitly only for non-intersecting squared Bessel processes with integer parameter.

We mention that if the starting points of Brownian bridges (or more generally Bessel processes) are set to $0$, the ending points are the same for all paths and it is scaled with the number of paths,
then the limit shape of the non-intersecting paths conditioned to stay positive separates from $0$ at some time in $(0,1)$.
In the neighbourhood of the point of separation, the hard-edge Pearcey process appears~\cite{DF08,KMW14,DV14}.

In this paper, we consider $N$ Brownian bridges starting from $0$ at time $0$ and ending at $0$ at time $1$.
We condition the Brownian bridges not to intersect for times $t\in (0,1)$ and denote by $B_N(t)$ the position of the top bridge at time $t$.
This is also known as Brownian watermelon and it is well-known that under appropriate scaling, $B_N$ converges to the Airy$_2$ process ${\mathcal A}_2$:
\begin{equation}
2N^{1/6}\left(B_N\left(\tfrac12(1+u N^{-1/3})\right)-\sqrt{N}\right)\to {\mathcal A}_2(u)-u^2
\end{equation}
as $N\to\infty$.
Therefore, if we consider the Brownian watermelon conditioned to stay below a threshold of height $\sqrt{N}+\tfrac12 R N^{-1/6}$,
then the probability that the conditioning is effective is in $(0,1)$ also in the $N\to\infty$ limit.
Thus we will see a new non-trivial limit process which we call \emph{hard-edge tacnode process for Brownian motions}.
This process is characterized by its finite dimensional distributions as given in Theorem~\ref{thm:asymptotics}.
When $R\to\infty$, the constraint becomes irrelevant and the top path will be the Airy$_2$ process (see the discussion after Theorem~\ref{thm:asymptoticsB}).
When $R\to-\infty$, after appropriate rescaling, the limit process should be the one with extended Bessel kernel~\cite{TW03b}
which was also derived for non-intersecting Brownian excursions studied in~\cite{TW07b}.

The derivation of our result does not use the standard determinantal point process approach~\cite{Jo05},
rather we start with a Fredholm determinant expression with path integral kernel obtained in~\cite{NR15,BCR13}
which gives the probability that the top path of $N$ Brownian bridges stays below a given function over an open time subinterval of $[0,1]$, see Proposition~\ref{prop:NguyenRemenik}.
First we extend the conditioning to the full time interval (see Theorem~\ref{thm:NBBbelowh}).
The finite dimensional distributions are then written as ratios of probabilities for two threshold functions leading to Theorem~\ref{thm:DistrH}.
Using~\cite{BCR13}, we can rewrite the Fredholm determinant of a path integral kernel to a Fredholm determinant of an extended kernel
which is indeed the correlation kernel as shown in Theorem~\ref{thm:corrkernel}.
(\hspace{-0.1em}\cite{BCR13} is a generalization of what was present in~\cite{PS02}.
The importance of~\cite{PS02} was rediscovered and extended in~\cite{CQR11} in the setting of the Airy processes.)
Notice that with the present method, we directly get formulas for quantities such as distribution of the maximum of $B_N$ (conditioned to stay below the threshold).
This quantity is not directly accessible by the standard method leading to the finite dimensional distributions.
Finally we perform the asymptotic analysis for the correlation kernel (see Theorem~\ref{thm:asymptotics})
and we give the limit of the probability that the top path of the non-intersecting Brownian bridges stays below a rescaled function (see Theorem~\ref{thm:asymptoticsB}).

After the appearance of the first version of this paper, non-intersecting Brownian bridges with reflecting and absorbing walls were studied in~\cite{LW17} by the method of orthogonal polynomials.
Their correlation kernel of the hard-edge tacnode process by the solution of a $2\times2$ Riemann--Hilbert problem both for reflecting and absorbing walls is less explicit than our formulation.
In a second step they show that the kernel $\wh K^{\ext}(T_1,U_1;T_2,U_2)$ in Theorem~\ref{thm:asymptotics} below is the odd part of the soft-edge tacnode process of~\cite{FV11}
thus proving the equivalence of the kernel $\wh K^{\ext}(T_1,U_1;T_2,U_2)$ and their formula for the hard-edge tacnode process in the case of absorbing walls.

\paragraph{Outline:}
In Section~\ref{s:results}, we define the model and present the results of this paper.
Section~\ref{s:6vertex} contains a short discussion on the relation with the six-vertex model.
In Section~\ref{s:mp}, we determine the multipoint distribution of $B_N$ conditioned to stay below a constant threshold.
Section~\ref{s:extension} contains the extension of the formula of~\cite{NR15} to the full time interval.
In Section~\ref{s:directcorr}, we prove the formula for the correlation kernel.
The large $N$ asymptotic analysis is performed in Section~\ref{s:asymptotics}.
Finally, Section~\ref{s:lemmas} contains the proof of several technical lemmas.

\paragraph{Acknowledgements:}
The authors are grateful for discussions with F.\ Colomo and A.\ Sportiello about their work and to both ICERM and the Galileo Galilei Institute
which provided the platform to make such discussions possible.
The work of P.L.~Ferrari is supported by the German Research Foundation via the SFB 1060--B04 project.
The work of B.\ Vet\H o is supported by OTKA (Hungarian National Research Fund) grant K100473.
His work is supported by the \'UNKP--16--4--III.\ New National Excellence Program of the Ministry of Human Capacities.
He is grateful for the Postdoctoral Fellowship of the Hungarian Academy of Sciences and for the Bolyai Research Scholarship.

\section{Model and main results}\label{s:results}

\subsubsection*{The model}
The model considered in this paper is the following system of $N$ non-intersecting Brownian bridges.
Consider $N$ standard Brownian bridges $B_1(t),\dots,B_N(t)$ which start from zero at time $t=0$ and end at zero at time $t=1$, and condition them on having no intersection in $t\in(0,1)$ in Doob's sense.
To denote the paths, we use the convention \mbox{$B_1(t)\le\dots\le B_N(t)$} with strict inequality for $t\in(0,1)$.

The starting point of the work is a formula for the distribution of the top path $B_N(t)$ conditioned to stay below a given function, based on \cite{BCR13} and \cite{NR15}.
To state it, we need some notations.
Let $H_n(x)$ denote the $n$th Hermite polynomial defined by
\begin{equation}
H_n(x)=(-1)^ne^{x^2}\frac{\d^n}{\d x^n}e^{-x^2}
\end{equation}
which form an orthogonal system with respect to the weight $e^{-x^2}\d x$ on $\R$, i.e.
\begin{equation}
\int_\R H_n(x)H_m(x)e^{-x^2}\d x=\sqrt\pi2^nn!\delta_{n,m}.
\end{equation}
Define the harmonic oscillator functions
\begin{equation}\label{defphi}
\varphi_n(x)=\pi^{-1/4}2^{-n/2}(n!)^{-1/2}e^{-x^2/2}H_n(x)
\end{equation}
and the Hermite kernel
\begin{equation}\label{defKHerm}
K_{\Herm,N}(x,y)=\sum_{n=0}^{N-1}\varphi_n(x)\varphi_n(y).
\end{equation}
With the Laplacian $\Delta$ on $\R$, let
\begin{equation}\label{defD}
D=-\frac12(\Delta-x^2+1)
\end{equation}
be the differential operator for which the eigenfunctions are the harmonic oscillator functions, that is, $D\varphi_n=n\varphi_n$.
Then $K_{\Herm,N}$ is a projection to the space spanned by the eigenfunctions $\varphi_0,\dots,\varphi_{N-1}$.

For some $0<a<b<1$, let $H^1([a,b])$ be the set of square integrable functions with square integrable derivative.
The following statement is a consequence of Propositions~2.1 (which goes back to Proposition~4.3 of~\cite{BCR13}) and Proposition~2.2 in~\cite{NR15}.
\begin{proposition}[Nguyen-Remenik~\cite{NR15}]\label{prop:NguyenRemenik}
Let $0<a<b<1$ and $h\in H^1([a,b])$ and denote by $B_N(t)$ the top path of $N$ non-intersecting Brownian bridges.
Then
\begin{equation}\label{condbelowh}
\P\left(B_N(t)<h(t)\mbox{ for }t\in[a,b]\right)=\det\left(\id-K_{\Herm,N}+\Theta_{A,B}e^{(B-A)D}K_{\Herm,N}\right)_{L^2(\R)}
\end{equation}
where
$A=\frac12\ln\frac a{1-a}$, $B=\frac12\ln\frac b{1-b}$, and $D$ is the differential operator defined in \eqref{defD}.
Further,
\begin{multline}\label{defTheta}
\Theta_{A,B}(x,y)=e^{(y^2-x^2)/2+B}\frac{\exp\left(-\frac{(e^By-e^Ax)^2}{4(\beta-\alpha)}\right)}{\sqrt{4\pi(\beta-\alpha)}}\\
\times\P_{\wh b(\alpha)=e^Ax,\wh b(\beta)=e^By}\left(\wh b(\tau)\le\frac{1+4\tau}{\sqrt2}h\left(\frac{4\tau}{1+4\tau}\right)\mbox{ for }\tau\in[\alpha,\beta]\right)
\end{multline}
where $\alpha=\frac14e^{2A}=\frac14\frac a{1-a}$ and $\beta=\frac14e^{2B}=\frac14\frac b{1-b}$.
In \eqref{defTheta}, $\wh b(\tau)$ denotes a Brownian bridge with diffusion coefficient $2$ starting at $\wh b(\alpha)=e^Ax$ and ending at $\wh b(\beta)=e^By$.
\end{proposition}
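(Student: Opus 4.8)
The plan is to derive \eqref{condbelowh}–\eqref{defTheta} from the two cited statements in \cite{NR15} by carefully tracking the change of variables that maps Brownian bridges on $[0,1]$ to stationary Ornstein--Uhlenbeck-type paths, and then the further time change that maps the OU picture to a Brownian bridge with diffusion coefficient $2$. The starting observation is that Proposition~2.1 of \cite{NR15} (the Brownian--Gibbs / path-integral formula going back to Proposition~4.3 of \cite{BCR13}) expresses $\P(B_N(t)<h(t) \text{ on } [a,b])$ as a Fredholm determinant of the form $\det(\id - K_{\Herm,N} + \Theta\, e^{(B-A)D} K_{\Herm,N})$, where the kernel $\Theta$ is itself a \emph{hitting probability} for an auxiliary diffusion conditioned at the endpoints corresponding to times $a$ and $b$; Proposition~2.2 then identifies that auxiliary diffusion explicitly. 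So the work is essentially bookkeeping: import the formula and make the substitutions explicit.

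First I would recall the standard bridge-to-OU correspondence: a standard Brownian bridge $B(t)$ on $[0,1]$, when written as $B(t) = \sqrt{t(1-t)}\, X(\ln\frac{t}{1-t})$ after an exponential time change, becomes (up to normalization) a stationary process whose generator is the harmonic oscillator $D$ of \eqref{defD} — this is exactly why $K_{\Herm,N}$ and the semigroup $e^{sD}$ appear. Under this map the threshold $h(t)$ on $[a,b]$ becomes a threshold on the log-time interval $[A,B]$ with $A=\frac12\ln\frac a{1-a}$, $B=\frac12\ln\frac b{1-b}$, which is the first pair of definitions to check. Next, the conjugation factors $e^{(y^2-x^2)/2}$ and the prefactor $e^{(y^2-x^2)/2+B}\exp(-(e^By-e^Ax)^2/4(\beta-\alpha))/\sqrt{4\pi(\beta-\alpha)}$ in \eqref{defTheta} are precisely the Gaussian/Girsanov weights produced when one rewrites the OU transition density relative to the reference Hermite measure and then undoes a second time change $\tau = \frac14 e^{2s}$ (so that $\alpha = \frac14 e^{2A}$, $\beta = \frac14 e^{2B}$), which turns the OU paths into genuine Brownian bridges with diffusion coefficient $2$ started at $e^Ax$ and ending at $e^By$. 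The argument $\frac{1+4\tau}{\sqrt2}\,h\!\left(\frac{4\tau}{1+4\tau}\right)$ of the hitting event in \eqref{defTheta} is then obtained by pushing the original threshold $h$ through the composition of the two time changes and the space rescaling $B(t)\mapsto \wh b(\tau)$; I would verify this by directly composing $t\mapsto \tau=\frac14\frac{t}{1-t}$ (equivalently $t=\frac{4\tau}{1+4\tau}$) with the space normalization, checking the boundary cases $\tau=\alpha,\beta$ against $x,y$.

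The main obstacle — and the one place where one must be genuinely careful rather than merely mechanical — is reconciling the \emph{normalizations and conjugations} across the three representations. \cite{NR15} and \cite{BCR13} each fix particular conventions for the harmonic oscillator functions $\varphi_n$, for the semigroup normalization (whether it is $e^{sD}$ with $D$ as in \eqref{defD} or a shifted/scaled version), and for which side of the kernel carries the Gaussian conjugation factor $e^{\pm x^2/2}$; an off-by-a-constant in any of these propagates into the prefactor of \eqref{defTheta} and into the diffusion coefficient of $\wh b$. Concretely, I would (i) write the OU transition kernel $e^{sD}(x,y)$ in closed form (Mehler's formula) and check it reproduces $e^{(B-A)D}$ acting on $K_{\Herm,N}$ with the claimed $A,B$; (ii) factor out the Gaussian weights to pass from the OU bridge measure to the diffusion-$2$ Brownian bridge measure, matching the constant $\exp(-(e^By-e^Ax)^2/4(\beta-\alpha))/\sqrt{4\pi(\beta-\alpha)}$ term by term; and (iii) confirm the stated threshold transformation by substitution. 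Everything else — measurability of the hitting event, the $H^1$ regularity assumption being inherited under the smooth time change, integrability so that the Fredholm determinant is well defined — is routine and can be quoted from \cite{NR15,BCR13}. Since the statement is explicitly attributed to \cite{NR15}, the honest ``proof'' is this translation, and I would present it as such rather than reproving the Brownian--Gibbs formula from scratch.
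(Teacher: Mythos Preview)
Your proposal is correct and matches the paper's own treatment: the paper does not give an independent proof of this proposition at all, but simply attributes it to Propositions~2.1 and~2.2 of \cite{NR15} (the former going back to Proposition~4.3 of \cite{BCR13}). Your plan to import the Fredholm determinant formula from \cite{NR15,BCR13} and then verify the explicit form of $\Theta_{A,B}$ by tracking the bridge-to-OU and OU-to-diffusion-$2$-bridge changes of variables is exactly the translation the paper presumes, and your identification of the normalization/conjugation bookkeeping as the only delicate point is accurate.
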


\subsubsection*{Finite $N$ result}
First of all, we extend Proposition~\ref{prop:NguyenRemenik} so that the condition for the $N$ non-intersecting Brownian bridges to stay below a function can be imposed for the whole $[0,1]$.
Since we are ultimately interested in the distribution of $N$ non-intersecting Brownian bridges conditioned to stay below a constant,
we consider functions $h$ such that for some $0<t_1<t_2<1$ and $r>0$,
\begin{equation}\label{defhr}
h(t)\le r\quad\mbox{for}\quad t\in[0,1]\qquad\mbox{and}\qquad h(t)=r\quad\mbox{for}\quad t\in[0,1]\setminus(t_1,t_2).
\end{equation}
Motivated by the definition \eqref{defTheta}, let
\begin{equation}\label{deftauhtilde}
\tau_i=\frac14\frac{t_i}{1-t_i}\quad\mbox{for}\quad i=1,2\qquad\mbox{and}\qquad\wt h(\tau)=\frac{1+4\tau}{\sqrt2}\left[h\left(\frac{4\tau}{1+4\tau}\right)-r\right].
\end{equation}
Further, for such a function $h$, define
\begin{multline}\label{defT1}
T_{\alpha_1,\alpha_2}^h(u,v)\\
=\frac{\d}{\d v} \P_{\wt b(\alpha_1)=u}\left(\wt b(\tau)\le0\mbox{ for }\tau\in[\alpha_1,\alpha_2],\wt b(\tau)\le\wt h(\tau)\mbox{ for }\tau\in(\tau_1,\tau_2),\wt b(\alpha_2)\leq v\right)
\end{multline}
where $\tau_1,\tau_2\in [\alpha_1,\alpha_2]$ and $\wt h$ are as in \eqref{deftauhtilde}. The Brownian motion $\wt b$ above has diffusion coefficient $2$.

For any $u,v\in\R$ and $n,m$ integers, introduce the functions
\begin{align}
\Phi_\tau^n(u)&=\frac1{\pi\I}\int_{\I\R}\d W\,W^ne^{\tau\left(\sqrt2r-2W\right)^2-\sqrt2rW}(f_W(u)-f_W(-u)),\label{defPhi}\\
\Psi_\tau^m(v)&=\frac1{2\pi\I}\oint_{\Gamma_0}\d Z\,Z^{-(m+1)}e^{-\tau\left(\sqrt2r-2Z\right)^2+\sqrt2rZ}(g_Z(v)-g_Z(-v))\label{defPsi}
\end{align}
with
\begin{equation}\label{deffg}
f_W(u)=e^{(\sqrt2r-2W)u}\quad\mbox{and}\quad g_Z(v)=e^{-(\sqrt2r-2Z)v}
\end{equation}
and define the kernel
\begin{equation}\label{defK0}
K_0(n,m)=\frac1{2\pi\I}\oint_{\Gamma_0}\d Z\,\frac{(\sqrt2r-Z)^n}{Z^{m+1}}e^{-2r^2+2\sqrt2rZ}.
\end{equation}
They satisfy the following compatibility conditions (see Section~\ref{s:lemmas} for the proof).
\begin{proposition}\label{prop:compatibility}
Let $\phi_t(x,y)=\frac{1}{\sqrt{2\pi t}} \exp\left(-(x-y)^2/2t\right)$ and set
\begin{equation}\label{defT2}
T_{\tau_1,\tau_2}(x,y)=\phi_{2(\tau_2-\tau_1)}(y-x)-\phi_{2(\tau_2-\tau_1)}(y+x)
\end{equation}
for any $x,y\in\R$. Then, for any $0<\tau_1<\tau_2$, $0<\tau$ and $u,v\in\R$, the following compatibility relations are satisfied:
\begin{align}
\int_{\R_-}\d u\,\Phi_{\tau_1}^n(u)T_{\tau_1,\tau_2}(u,v)&=\Phi_{\tau_2}^n(v),\label{PhiT}\\
\int_{\R_-}\d v\,T_{\tau_1,\tau_2}(u,v)\Psi_{\tau_2}^m(v)&=\Psi_{\tau_1}^m(u),\label{TPsi}\\
\int_{\R_-}\d u\,\Phi_\tau^n(u)\Psi_\tau^m(u)&=(\id-K_0)(n,m).\label{PhiPsi}
\end{align}
\end{proposition}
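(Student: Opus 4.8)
The three identities in Proposition~\ref{prop:compatibility} are all of the same nature: they assert that the families $\Phi_\tau^n$ and $\Psi_\tau^m$, defined through contour integrals over $W\in\I\R$ and $Z\in\Gamma_0$ with Gaussian-in-the-parameter weights, are intertwined by the reflected heat kernel $T_{\tau_1,\tau_2}$ on the half-line and that their pairing reproduces $\id-K_0$. The plan is to verify each relation by a direct computation: substitute the integral representations, interchange the ($W$- or $Z$-) contour integral with the spatial integral over $\R_-$, and evaluate the resulting Gaussian/exponential integral in the space variable in closed form. The reflection structure $T_{\tau_1,\tau_2}(x,y)=\phi_{2(\tau_2-\tau_1)}(y-x)-\phi_{2(\tau_2-\tau_1)}(y+x)$ is designed to match the antisymmetric combinations $f_W(u)-f_W(-u)$ and $g_Z(v)-g_Z(-v)$, so that the method of images makes the spatial integration over the half-line reduce to a full-line Gaussian integral of $e^{cx}$ against $\phi$, which is elementary.

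For \eqref{PhiT}, I would insert \eqref{defPhi} for $\Phi_{\tau_1}^n$ and compute $\int_{\R_-}\d u\,(f_W(u)-f_W(-u))\,T_{\tau_1,\tau_2}(u,v)$. Writing $c=\sqrt2r-2W$, this is $\int_{\R_-}\d u\,(e^{cu}-e^{-cu})(\phi_{2(\tau_2-\tau_1)}(v-u)-\phi_{2(\tau_2-\tau_1)}(v+u))$; splitting the four terms and changing $u\mapsto-u$ in the reflected pieces shows it equals $\int_\R\d u\,e^{cu}\phi_{2(\tau_2-\tau_1)}(v-u)-\int_\R\d u\,e^{cu}\phi_{2(\tau_2-\tau_1)}(v+u)$. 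Each is a Gaussian integral, giving $e^{(\tau_2-\tau_1)c^2}(e^{cv}-e^{-cv})=e^{(\tau_2-\tau_1)(\sqrt2r-2W)^2}(f_W(v)-f_W(-v))$. Multiplying back by $W^n e^{\tau_1(\sqrt2r-2W)^2-\sqrt2rW}/(\pi\I)$ and integrating over $W\in\I\R$, the exponents combine to $\tau_2(\sqrt2r-2W)^2-\sqrt2rW$, which is exactly $\Phi_{\tau_2}^n(v)$. The relation \eqref{TPsi} is the transpose: with $d=\sqrt2r-2Z$, the same image-method manipulation gives $\int_{\R_-}\d v\,T_{\tau_1,\tau_2}(u,v)(g_Z(v)-g_Z(-v))=e^{(\tau_2-\tau_1)d^2}(g_Z(u)-g_Z(-u))$, and multiplying by $Z^{-(m+1)}e^{-\tau_2 d^2+\sqrt2rZ}/(2\pi\I)$ and integrating over $\Gamma_0$ shrinks $\tau_2$ to $\tau_1$, yielding $\Psi_{\tau_1}^m(u)$; here one must also check that the contour $\Gamma_0$ around $0$ is unaffected and that the ordering of integrations is legitimate (the $\Gamma_0$-integral is over a compact contour, so Fubini is immediate).

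For \eqref{PhiPsi}, I would write the product $\Phi_\tau^n(u)\Psi_\tau^m(u)$ as a double integral over $W\in\I\R$ and $Z\in\Gamma_0$, and compute $\int_{\R_-}\d u\,(f_W(u)-f_W(-u))(g_Z(u)-g_Z(-u))$; with $c=\sqrt2r-2W$, $d=\sqrt2r-2Z$ the integrand expands into exponentials $e^{\pm(c-d)u}$ and $e^{\pm(c+d)u}$, and the half-line integrals converge provided the contours are positioned so that $\Re(c-d)$ has a definite sign — this is where one must be careful about the relative location of the $W$- and $Z$-contours (one typically takes $\Re Z$ small positive so that $\Re(d)<\Re(\sqrt2 r)$ while $\Re(c)=\sqrt2 r$, making $\Re(c-d)>0$). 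One obtains a rational function of $c-d$ and $c+d$, i.e. of $W$ and $Z$; the $W$-integral over $\I\R$ can then be closed and evaluated by residues, collapsing the double integral to a single $Z$-integral over $\Gamma_0$, which should reproduce $\delta_{n,m}-K_0(n,m)$ with $K_0$ as in \eqref{defK0}. The main obstacle is precisely this last step: keeping track of which poles in $W$ are enclosed, ensuring the arc contributions vanish, and matching the residue sum against the two pieces $\id$ and $-K_0$ — the identity part should come from the ``diagonal'' pole structure while the $K_0$ part carries the Gaussian factor $e^{-2r^2+2\sqrt2rZ}$ and the ratio $(\sqrt2r-Z)^n/Z^{m+1}$. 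Everything else is routine Gaussian integration and contour bookkeeping.
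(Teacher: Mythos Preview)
Your treatment of \eqref{PhiT} and \eqref{TPsi} is correct and coincides with the paper's: the image-method reduction from $\R_-$ to $\R$ followed by the Gaussian integral $\int_\R e^{cu}\phi_{2(\tau_2-\tau_1)}(v\mp u)\,\d u=e^{(\tau_2-\tau_1)c^2}e^{\pm cv}$ is exactly the content of \eqref{fphi}--\eqref{phig} in Lemma~\ref{lemma:fgphi}.

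For \eqref{PhiPsi} there is a genuine gap. Expanding $(f_W(u)-f_W(-u))(g_Z(u)-g_Z(-u))$ gives the four exponentials $e^{\pm(c-d)u}$, $e^{\pm(c+d)u}$, but on $\R_-$ their convergence requirements are pairwise incompatible: $e^{(c-d)u}$ needs $\Re(c-d)>0$ while $e^{-(c-d)u}$ needs $\Re(c-d)<0$, and likewise for $c+d$. No single placement of the $W$-contour makes all four half-line integrals converge; if you assign the formal value $\int_{\R_-}e^{au}\,\d u=1/a$ to each term the sum cancels to zero. Your suggested fix (take $\Re Z>0$ small so that $\Re(c-d)>0$) handles only one of the four terms.

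The paper's device is the following. The integrand is even in $u$, so $\int_{\R_-}=\tfrac12\int_\R$; after the substitution $u\mapsto-u$ in two of the four terms one is left with the two full-line quantities $\int_\R f_W(u)g_Z(u)\,\d u$ and $-\int_\R f_W(u)g_Z(-u)\,\d u$. Each of these is then made sense of by splitting back into $\R_-$ and $\R_+$ with \emph{different} deformations of the $W$-contour (to $-1+\I\R$ on $\R_-$ and to $1+\I\R$ on $\R_+$), so that the half-line integrals converge to $\pm\tfrac1{2(Z-W)}$ respectively; rejoining the two $W$-contours picks up precisely the residue at $W=Z$ (producing $\id$, see \eqref{fugu}) and at $W=\sqrt2r-Z$ (producing $K_0$, see \eqref{fug-u}). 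Your intuition that residues in $W$ give the $\id$ and $K_0$ pieces is correct, but the mechanism that makes the $u$-integral well-defined---either the evenness trick first, or a term-by-term deformation of the $W$-contour \emph{before} integrating in $u$---is the missing ingredient in your sketch.
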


We can now state the extension of Proposition~\ref{prop:NguyenRemenik} to the conditioning on the full time interval.
\begin{theorem}[Full time span conditioning]\label{thm:NBBbelowh}
Let the function $h\in H^1([0,1])$ satisfy~\eqref{defhr} for some $0<t_1<t_2<1$. Then
\begin{equation}\label{NBBbelowh}
\P(B_N(t)<h(t)\mbox{ for }t\in[0,1])=\det\left(\id-K_N^h\right)_{L^2(\{0,1,\dots,N-1\})}
\end{equation}
where the kernel $K_N^h$ is given by
\begin{equation}\label{defKN}
K_N^h(n,m)=\id(n,m)-\int_\R\d u\int_\R\d v\,\Phi_{\tau_1}^n(u)T_{\tau_1,\tau_2}^h(u,v)\Psi_{\tau_2}^m(v).
\end{equation}
\end{theorem}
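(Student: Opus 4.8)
The plan is to deduce \eqref{NBBbelowh} from Proposition~\ref{prop:NguyenRemenik} by enlarging the conditioning interval to all of $[0,1]$, i.e.\ by letting $a\downarrow0$ and $b\uparrow1$ in \eqref{condbelowh}. Fix $a,b$ with $0<a<t_1<t_2<b<1$; by \eqref{defhr} the function $h$ equals the constant $r$ on $[a,t_1]$ and on $[t_2,b]$. On the probabilistic side the events $\{B_N(t)<h(t)\ \forall t\in[a,b]\}$ decrease as $a\downarrow0,\ b\uparrow1$, and since $B_N$ has continuous paths with $B_N(0)=B_N(1)=0<r=h(0)=h(1)$, their intersection equals $\{B_N(t)<h(t)\ \forall t\in[0,1]\}$ up to a null set; hence the left-hand side of \eqref{condbelowh} converges to the left-hand side of \eqref{NBBbelowh}, and it remains to identify the limit of the Fredholm determinant.

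The first step is to reduce the $L^2(\R)$-determinant to an $N\times N$ one. Since $K_{\Herm,N}$ is the orthogonal projection onto $\mathrm{span}\{\varphi_0,\dots,\varphi_{N-1}\}$ and $D\varphi_m=m\varphi_m$, the operator in \eqref{condbelowh} equals $\id+\sum_{m=0}^{N-1}\big(e^{(B-A)m}\Theta_{A,B}\varphi_m-\varphi_m\big)\otimes\varphi_m$, and the finite-rank determinant identity $\det(\id+\sum_i u_i\otimes v_i)=\det(\delta_{ij}+\langle v_i,u_j\rangle)$ yields
\begin{equation*}
\det\big(\id-K_{\Herm,N}+\Theta_{A,B}e^{(B-A)D}K_{\Herm,N}\big)_{L^2(\R)}=\det\Big(\big[\,e^{(B-A)m}\langle\varphi_n,\Theta_{A,B}\varphi_m\rangle\,\big]_{n,m=0}^{N-1}\Big).
\end{equation*}
Because the determinant of an $N\times N$ matrix is continuous in its entries, \eqref{NBBbelowh} follows once one proves that, for every fixed $n,m\in\{0,\dots,N-1\}$,
\begin{equation}\label{plan:entry}
e^{(B-A)m}\langle\varphi_n,\Theta_{A,B}\varphi_m\rangle\ \longrightarrow\ \int_\R\!\int_\R\Phi^n_{\tau_1}(u)\,T^h_{\tau_1,\tau_2}(u,v)\,\Psi^m_{\tau_2}(v)\,\d u\,\d v\qquad(a\downarrow0,\ b\uparrow1),
\end{equation}
the right-hand side being precisely $(\id-K_N^h)(n,m)$.

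To treat \eqref{plan:entry} I would read off from \eqref{defTheta} that, apart from the explicit separable prefactor $e^{(y^2-x^2)/2+B}$, the kernel $\Theta_{A,B}(x,y)$ is the sub-probability density of a diffusion-coefficient-$2$ Brownian motion going from $e^Ax$ at time $\alpha=\tfrac14\tfrac a{1-a}$ to $e^By$ at time $\beta=\tfrac14\tfrac b{1-b}$ while staying below $\tfrac{1+4\tau}{\sqrt2}h(\tfrac{4\tau}{1+4\tau})$. Using the Markov property of this killed motion at the times $\tau_1,\tau_2$ and, on the intervals $[\alpha,\tau_1]$, $(\tau_1,\tau_2)$, $[\tau_2,\beta]$, shifting away the barrier $\tfrac{1+4\tau}{\sqrt2}r$ via the substitution $\tilde b(\tau)=b(\tau)-\tfrac{1+4\tau}{\sqrt2}r$ (a Girsanov factor accounts for the resulting drift $-2\sqrt2\,r$), so that the reflection principle and \eqref{defT2} apply on the outer intervals, one can — after redistributing the Girsanov exponentials among the three pieces — factor $e^{(B-A)m}\langle\varphi_n,\Theta_{A,B}\varphi_m\rangle=\int_{\R_-}\!\int_{\R_-}\Phi^{n,(A)}_{\tau_1}(u)\,T^h_{\tau_1,\tau_2}(u,v)\,\Psi^{m,(B)}_{\tau_2}(v)\,\d u\,\d v$, in which the middle kernel is literally $T^h_{\tau_1,\tau_2}$ of \eqref{defT1} (hence independent of $a,b$), $\Phi^{n,(A)}_{\tau_1}$ gathers the $x$-integral of $\varphi_n$ against the reflected heat kernel on $[\alpha,\tau_1]$ together with the accompanying exponential factors, and $\Psi^{m,(B)}_{\tau_2}$ gathers the $y$-integral of $\varphi_m$ against the reflected heat kernel on $[\tau_2,\beta]$, together with $e^{(B-A)m}$ and the accompanying exponentials. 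It then suffices to establish the two one-sided limits $\Phi^{n,(A)}_{\tau_1}\to\Phi^n_{\tau_1}$ as $a\downarrow0$ and $\Psi^{m,(B)}_{\tau_2}\to\Psi^m_{\tau_2}$ as $b\uparrow1$ in a mode of convergence strong enough to pass to the limit under the $(u,v)$-integral (the Gaussian decay of $T^h_{\tau_1,\tau_2}$ provides the domination). To identify the limits with \eqref{defPhi}–\eqref{defPsi}, with $f_W,g_Z$ as in \eqref{deffg}, I would insert the contour representation $H_k(x)=\frac{k!}{2\pi\I}\oint_{\Gamma_0}z^{-(k+1)}e^{2xz-z^2}\,\d z$ for the Hermite polynomials in $\varphi_n,\varphi_m$, carry out the remaining Gaussian integrations in $x$ and $y$ explicitly, and send $\alpha\to0$, resp.\ $\beta\to\infty$; the compatibility relations \eqref{PhiT}–\eqref{PhiPsi} of Proposition~\ref{prop:compatibility} serve as a consistency check and, a posteriori, show that \eqref{NBBbelowh} does not depend on the choice of $t_1,t_2$.

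The main obstacle is the pair of one-sided limits, and especially the right one $b\uparrow1$. There the image $[\tau_2,\beta]$ of $[t_2,b]$ has length $\beta-\tau_2\to\infty$ and the conditioning acts all the way out to $\tau=\beta$, which after the shift corresponds to a Brownian motion with negative drift conditioned to stay negative on the entire half-line $[\tau_2,\infty)$. The subtlety is that the limit is genuinely singular at the level of the individual factors: $e^{(B-A)m}\to\infty$, the Gaussian normalisation $\sqrt{4\pi(\beta-\alpha)}$ and the Girsanov weight both diverge as $\beta\to\infty$, and the relevant arguments of $\varphi_m$ run off into its Gaussian tail. In fact, in the naive pointwise limit $\Theta_{A,B}$ degenerates to rank one, and it is only the prefactor $e^{(B-A)D}$ that restores the full rank; one therefore cannot pass to the limit termwise, but must perform the $x$- and $y$-integrations exactly first and exhibit the cancellation of the divergent pieces, the survivor being the finite contour integral $\Psi^m_{\tau_2}$. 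The left limit $a\downarrow0$ requires the same bookkeeping but is milder, since no time span blows up there.
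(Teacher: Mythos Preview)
Your overall strategy---reduce the $L^2(\R)$ determinant to an $N\times N$ one, factor the Brownian transition via the Markov property at $\tau_1,\tau_2$, shift away the barrier, and identify the pieces with $\Phi^n,\Psi^m,T^h$---is exactly the paper's. But the central claim \eqref{plan:entry} is \emph{false} as written, and the gap is not just a matter of bookkeeping.

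Set $A=-L$, $B=L$. If one removes the projections $\bar P_{\sqrt2 r\cosh L}$ and performs the $x,y$ integrations exactly (using the two integral representations \eqref{phirepr1}--\eqref{phirepr2} of $\varphi_n,\varphi_m$), the outcome is
\[
e^{2Lm}\langle\varphi_n,\Theta_{-L,L}\varphi_m\rangle
\;=\;\delta_{n,m}\;-\;\sqrt{\tfrac{m!\,2^n}{n!\,2^m}}\,e^{L(m-n)}\,K_N^h(n,m)\;+\;\text{(projection error)};
\]
this is precisely the content of the paper's Proposition~\ref{prop:kerneleq}. The factor $e^{L(m-n)}$ is a diagonal conjugation, harmless for the determinant, but it makes the off-diagonal entries blow up for $m>n$ and vanish for $m<n$. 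Consequently your factors $\Phi^{n,(A)}_{\tau_1}$ and $\Psi^{m,(B)}_{\tau_2}$ cannot both converge to $\Phi^n_{\tau_1}$ and $\Psi^m_{\tau_2}$: if one did, the other would have to carry the $e^{\pm L}$ divergence. The ``cancellation of divergent pieces after exact $x,y$ integration'' that you anticipate does not happen at the level of a single matrix entry; the divergence is structural and only cancels across the full determinant.

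The paper's remedy is to \emph{symmetrize before} reducing to $N\times N$: one rewrites the Fredholm determinant as $\det(\id-K_{\Herm,N}+e^{LD}K_{\Herm,N}\Theta_{-L,L}e^{LD}K_{\Herm,N})$ using $e^{2LD}K_{\Herm,N}=(e^{LD}K_{\Herm,N})^2$, which amounts to looking at $e^{L(n+m)}\langle\varphi_n,\Theta_{-L,L}\varphi_m\rangle$ instead. After the same exact computation, the conjugation factor becomes $L$-independent, and the main term is \emph{exactly} $\sqrt{m!2^n/(n!2^m)}\,K_N^h(n,m)$ for every $L$---no limit is needed there at all (see the remark after Proposition~\ref{prop:kerneleq}). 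The limit $L\to\infty$ enters only to kill the contribution of the projections $\bar P_{\sqrt2 r\cosh L}$; the paper isolates this as an error operator $\Omega_L$ and shows $e^{LD}K_{\Herm,N}\Omega_Le^{LD}K_{\Herm,N}\to0$ in trace norm (Lemma~\ref{lemma:error}). Your program becomes correct once you insert this symmetrization and separate the projection error; the remaining identification of $\Phi^n,\Psi^m$ then drops out of the exact formula rather than from a limiting argument. A minor additional point: you propose using only the $\Gamma_0$-contour representation of the Hermite functions, but to make the Gaussian $x$-integral converge one needs the $\I\R$-representation \eqref{phirepr1} for $\varphi_n$ and the $\Gamma_0$-representation \eqref{phirepr2} for $\varphi_m$, as the paper does.
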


As a consequence, we get the following for the probability that the conditioned process remains below a given function.
\begin{theorem}\label{thm:DistrH}
Under the assumptions of Theorem~\ref{thm:NBBbelowh}, we have
\begin{multline}\label{NBBbelowhCond}
\P(B_N(t)<h(t)\mbox{ for }t\in[0,1]\bigm|B_N(t)<r\mbox{ for }t\in[0,1])\\
=\det\left(\id-K_{\tau_1}+T^h_{\tau_1,\tau_2}K_{\tau_2,\tau_1}\right)_{L^2(\R_-)}
\end{multline}
where $K_{\tau_1}=K_{\tau_1,\tau_1}$ and $K_{\tau_2,\tau_1}$ is given by
\begin{equation}\label{defKtau}
K_{\tau_2,\tau_1}(u,v)=\sum_{n,m=0}^{N-1}\Psi_{\tau_2}^n(u)\,(\id-K_0)^{-1}(n,m)\,\Phi_{\tau_1}^m(v).
\end{equation}
\end{theorem}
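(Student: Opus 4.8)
The plan is to derive Theorem~\ref{thm:DistrH} from Theorem~\ref{thm:NBBbelowh} by taking the ratio of two Fredholm determinants and using the compatibility relations of Proposition~\ref{prop:compatibility} to transport the determinant from the discrete space $L^2(\{0,\dots,N-1\})$ to the continuous space $L^2(\R_-)$. First I would write the conditional probability on the left-hand side of \eqref{NBBbelowhCond} as the quotient $\P(B_N(t)<h(t)\,\forall t)/\P(B_N(t)<r\,\forall t)$. The denominator is the special case $h\equiv r$ of Theorem~\ref{thm:NBBbelowh}: in that case $\wt h\equiv 0$, the constraint $\wt b(\tau)\le\wt h(\tau)$ on $(\tau_1,\tau_2)$ is vacuous, and $T^h_{\tau_1,\tau_2}$ collapses to $T_{\tau_1,\tau_2}$ from \eqref{defT2} (a simple Brownian bridge computation using the reflection principle for the absorbing half-line). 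So both numerator and denominator are determinants of the form $\det(\id - K_N^{(\cdot)})$ on $L^2(\{0,\dots,N-1\})$, differing only in whether the middle factor is $T^h_{\tau_1,\tau_2}$ or $T_{\tau_1,\tau_2}$.

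Next I would factor out the denominator. Writing $K_N^h = \id - \Phi_{\tau_1}\, T^h_{\tau_1,\tau_2}\, \Psi_{\tau_2}$ schematically (where $\Phi_{\tau_1}$ denotes the operator $L^2(\R_-)\to\ell^2$ with kernel $\Phi_{\tau_1}^n(u)$ and $\Psi_{\tau_2}$ the operator $\ell^2\to L^2(\R_-)$ with kernel $\Psi_{\tau_2}^m(v)$), we have
\begin{equation}
\det(\id-K_N^h)_{\ell^2} = \det\!\big(\Phi_{\tau_1}\,T^h_{\tau_1,\tau_2}\,\Psi_{\tau_2}\big)_{\ell^2}.
\end{equation}
By the cyclic property of the Fredholm determinant ($\det(\id+AB)=\det(\id+BA)$, applied with the composition written as $\id$ minus a product), this equals $\det(\Psi_{\tau_2}\,\Phi_{\tau_1}\,T^h_{\tau_1,\tau_2})_{L^2(\R_-)}$ — here I move to the continuous side. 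The crucial input now is relation \eqref{PhiPsi}: $\int_{\R_-}\Phi_\tau^n\Psi_\tau^m = (\id-K_0)(n,m)$, together with \eqref{PhiT} and \eqref{TPsi}, which let me rewrite $\Psi_{\tau_2}\Phi_{\tau_1}$ (with a $T_{\tau_1,\tau_2}$ inserted to bring both indices to the same time) in terms of the kernel $K_{\tau_2,\tau_1}$ of \eqref{defKtau}. Concretely, using \eqref{PhiT} to push $\Phi_{\tau_1}$ forward to time $\tau_2$ and \eqref{PhiPsi} with the $(\id-K_0)^{-1}$ weighting from \eqref{defKtau}, the product $\int_{\R_-} T_{\tau_1,\tau_2}(w,u)\,K_{\tau_2,\tau_1}(u,v)$-type expression should collapse to the identity kernel on $\R_-$ up to the structure $\id-K_{\tau_1}$, yielding the stated combination $\id - K_{\tau_1} + T^h_{\tau_1,\tau_2}K_{\tau_2,\tau_1}$ after dividing by the denominator determinant $\det(\id-K_N^r)$.

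The division step itself I would handle by observing that both determinants share the factor $\det(\id-K_0)_{\ell^2}$ (which is nonzero — this needs a brief justification, e.g. $K_0$ is strictly upper triangular or a contour-deformation argument shows $\id-K_0$ is invertible on the finite-dimensional space), and that after passing to $L^2(\R_-)$ the denominator becomes $\det(\id-K_{\tau_1}+T_{\tau_1,\tau_2}K_{\tau_2,\tau_1})$; then one checks this latter determinant equals $\P(B_N(t)<r\,\forall t)/\det(\id-K_0)$ and cancels against the normalization so that the ratio is exactly the right-hand side of \eqref{NBBbelowhCond}. The main obstacle I anticipate is bookkeeping the operator identities cleanly: one must be careful that the integrals over $\R_-$ in $T^h_{\tau_1,\tau_2}(u,v)$ (which is a $\frac{\d}{\d v}$ of a bridge probability, hence a measure/distribution in $v$, supported with the constraint $\wt b\le 0$) interact correctly with the half-line projections implicit in $\Phi^n_{\tau_1}(u)$ and $\Psi^m_{\tau_2}(v)$ being odd functions restricted to $\R_-$ — i.e. justifying that the compatibility relations \eqref{PhiT}–\eqref{PhiPsi}, stated for the reflected heat kernel $T_{\tau_1,\tau_2}$ on $\R_-$, may be freely composed with $T^h_{\tau_1,\tau_2}$, and that no boundary terms arise from the derivative in $v$ when integrating against $\Psi^m_{\tau_2}$. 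Once that compatibility is in place, the algebraic manipulation of Fredholm determinants is routine.
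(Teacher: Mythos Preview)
Your overall strategy—take the ratio of the two determinants from Theorem~\ref{thm:NBBbelowh} and use Proposition~\ref{prop:compatibility} to pass from $\ell^2(\{0,\dots,N-1\})$ to $L^2(\R_-)$—is exactly the paper's. But the step where you invoke the cyclic property does not work as written. You have $\id-K_N^h=\Phi_{\tau_1}T^h_{\tau_1,\tau_2}\Psi_{\tau_2}$ on the finite space, so $\det(\id-K_N^h)_{\ell^2}$ is the determinant of the \emph{bare product} $\Phi_{\tau_1}T^h_{\tau_1,\tau_2}\Psi_{\tau_2}$; the identity $\det(\id+AB)=\det(\id+BA)$ needs the $\id+\cdot$ structure, and there is no Fredholm determinant $\det(\Psi_{\tau_2}\Phi_{\tau_1}T^h_{\tau_1,\tau_2})_{L^2(\R_-)}$ of a finite-rank operator standing alone. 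Cycling at this point gives nothing.

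The fix—and this is what the paper does—is to divide \emph{before} cycling. From Proposition~\ref{prop:compatibility} one gets $K_N^r=K_0$ on the nose (your observation about the denominator is correct, and this is its precise form). Then by multiplicativity on the finite space,
\[
\frac{\det(\id-K_N^h)_{\ell^2}}{\det(\id-K_0)_{\ell^2}}
=\det\!\big(\id-(K_N^h-K_0)(\id-K_0)^{-1}\big)_{\ell^2},
\]
and now $K_N^h-K_0=\Phi_{\tau_1}\big(\ol P_0T_{\tau_1,\tau_2}\ol P_0-T^h_{\tau_1,\tau_2}\big)\Psi_{\tau_2}$ really is of the form $AB$, so the cyclic identity legitimately moves the determinant to $L^2(\R_-)$. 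Relation \eqref{TPsi} then turns $\ol P_0 T_{\tau_1,\tau_2}\ol P_0\,\Psi_{\tau_2}(\id-K_0)^{-1}\Phi_{\tau_1}$ into $K_{\tau_1}$, yielding \eqref{NBBbelowhCond} directly. Your worry about boundary terms from the $\tfrac{\d}{\d v}$ in $T^h$ is not an issue: $T^h_{\tau_1,\tau_2}(u,v)$ enters only as an integral kernel supported in $\R_-^2$. (Incidentally, $K_0$ is not upper triangular; invertibility of $\id-K_0$ follows from $\det(\id-K_0)=\P(B_N(t)<r,\,t\in[0,1])>0$.)
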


For $N$ non-intersecting Brownian bridges conditioned to stay below a constant level $r$ for $[0,1]$,
we know by the Karlin--McGregor type formulas and Eynard--Mehta \mbox{theorem~\cite{EM97,KM59}} that it forms a determinantal process.
We compute its correlation kernel which characterizes the finite dimensional distributions of the process.
Conditioning $N$ non-intersecting Brownian bridges to stay below $r$ corresponds to the $h\equiv r$ constant choice in \eqref{defhr}.
In this case, \eqref{defT1} becomes $\id_{u<0}T_{\alpha_1,\alpha_2}(u,v)\id_{v<0}$ by the reflection principle.
The correlation kernel of $N$ non-intersecting Brownian bridges conditioned to be below a constant level is given as follows.

\begin{theorem}[Correlation kernel]\label{thm:corrkernel}
The system of $N$ non-intersecting Brownian bridges conditioned to stay below the constant level $r$ for time $[0,1]$
forms a determinantal process with extended correlation kernel defined for $t_1,t_2\in[0,1]$ and $x_1,x_2\le r$ by
\begin{equation}\label{defKexttx}
K_\ext(t_1,x_1;t_2,x_2)=\frac1{\sqrt{2(1-t_1)(1-t_2)}}K^\ext(\tau_1,u_1;\tau_2,u_2)
\end{equation}
where we used the variables
\begin{equation}\label{deftauu}
\tau_i=\frac14\frac{t_i}{1-t_i},\qquad u_i=\frac{x_i-r}{\sqrt2(1-t_i)}
\end{equation}
due to \eqref{deftauhtilde} and the kernel
\begin{equation}\label{defKexttauu}
K^\ext(\tau_1,u_1;\tau_2,u_2)=-\id_{\tau_1<\tau_2}T_{\tau_1,\tau_2}(u_1,u_2)+\sum_{n,m=0}^{N-1}\Psi_{\tau_1}^n(u_1)(\id-K_0)^{-1}(n,m)\Phi_{\tau_2}^m(u_2).
\end{equation}
In particular, the gap probabilities of $N$ non-intersecting Brownian bridges conditioned to stay below level $r$ can be expressed for any $t_1,\dots,t_k\in[0,1]$ and $h_1,\dots,h_k\le r$ as
\begin{equation}\label{BNgapprob}
\P\left(B_N(t_1)<h_1,\dots,B_N(t_k)<h_k\bigm|B_N(t)<r, t\in[0,1]\right)
=\det(\id-QK^\ext)_{L^2(\{\tau_1,\dots,\tau_k\}\times\R_-)}
\end{equation}
with
\begin{equation}\label{defQeta}
Qf(\tau_i,u)=\id_{u\ge\eta_i}f(\tau_i,u),\qquad\tau_i=\frac14\frac{t_i}{1-t_i},\qquad\eta_i=\frac{1+4\tau_i}{\sqrt2}(h_i-r).
\end{equation}
\end{theorem}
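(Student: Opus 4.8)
The plan is to obtain Theorem~\ref{thm:corrkernel} from Theorem~\ref{thm:DistrH} by the standard device of reading off a correlation kernel from a Fredholm determinant of the form $\det(\id-K_{\tau_1}+T^h_{\tau_1,\tau_2}K_{\tau_2,\tau_1})$ as the test function $h$ is specialized to an indicator-type threshold. Concretely, I would take $h$ to be $r$ outside a union of small intervals around the observation times $t_1,\dots,t_k$ and equal to $h_i$ (or $-\infty$, i.e.\ a hard wall) on tiny neighbourhoods of $t_i$, let those neighbourhoods shrink, and track what the kernel $K^h_{\tau_1,\tau_2}$ and the operator $T^h$ converge to. In the constant case $h\equiv r$ the operator $T^h_{\alpha_1,\alpha_2}$ collapses to $\id_{u<0}T_{\alpha_1,\alpha_2}(u,v)\id_{v<0}$ by the reflection principle, as already noted in the excerpt; the point is that inserting the localized threshold at the times $t_i$ is precisely what produces the projection $Q$ in~\eqref{defQeta}, so the right-hand side of~\eqref{NBBbelowhCond} becomes $\det(\id-QK^\ext)$ on $L^2(\{\tau_1,\dots,\tau_k\}\times\R_-)$ after unfolding the single-interval expression into a $k$-fold one.

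The first concrete step is to rewrite the single-component Fredholm determinant $\det(\id-K_{\tau_1}+T^h_{\tau_1,\tau_2}K_{\tau_2,\tau_1})_{L^2(\R_-)}$ as a multi-component determinant indexed by the observation times, using the semigroup/compatibility relations of Proposition~\ref{prop:compatibility}. The key algebraic identity is that composing the transition kernels $T_{\tau_i,\tau_{i+1}}(u,v)$ along a chain reproduces $T_{\tau_1,\tau_k}$ via~\eqref{PhiT}--\eqref{TPsi}, and that $\Psi^n_{\tau}$ and $\Phi^m_\tau$ transport consistently through these kernels; this is exactly the mechanism by which a path-integral (or extended) kernel with the structure~\eqref{defKexttauu} arises. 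So I would posit the extended kernel $K^\ext(\tau_i,u_i;\tau_j,u_j)=-\id_{\tau_i<\tau_j}T_{\tau_i,\tau_j}(u_i,u_j)+\sum_{n,m}\Psi^n_{\tau_i}(u_i)(\id-K_0)^{-1}(n,m)\Phi^m_{\tau_j}(u_j)$ and verify by a direct block-determinant manipulation (of the type in~\cite{BF15,BCR13}) that $\det(\id-QK^\ext)$ on the product space equals the single-interval determinant of Theorem~\ref{thm:DistrH} with $h$ the appropriate step function. Then I would invoke the Karlin--McGregor / Eynard--Mehta theorem to assert that the conditioned bridge ensemble is determinantal, so that the kernel so identified must be (a conjugation of) its correlation kernel; the scalar prefactor $1/\sqrt{2(1-t_1)(1-t_2)}$ and the change of variables~\eqref{deftauu} are just the Jacobian and gauge transformation relating the $(t,x)$ and $(\tau,u)$ coordinates via~\eqref{deftauhtilde}.

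The main obstacle I anticipate is the passage from the single-threshold formula of Theorem~\ref{thm:DistrH} to the genuinely multi-time extended kernel — i.e.\ showing that the "path-integral kernel" determinant $\det(\id-K_{\tau_1}+T^h_{\tau_1,\tau_2}K_{\tau_2,\tau_1})$ equals the "extended kernel" determinant $\det(\id-QK^\ext)$. This is the content of the Brandão--Corwin--Remenik-type identity~\cite{BCR13} converting path-integral kernels into extended kernels, and making it rigorous here requires care about: (i) trace-class bounds on the operators $T^h$, $K_{\tau_i,\tau_j}$, and the series defining $K^\ext$, given the Gaussian decay of $\Phi^n_\tau,\Psi^m_\tau$ in~\eqref{defPhi}--\eqref{defPsi} but only after checking convergence of the $W$- and $Z$-contour integrals; (ii) the invertibility of $\id-K_0$ on $\ell^2(\{0,\dots,N-1\})$, which one gets from $K_0$ being strictly triangular or nilpotent-like in the $n,m$ indices (the contour integral~\eqref{defK0} picks out $n\le m$ contributions), so $(\id-K_0)^{-1}$ is a finite Neumann series; and (iii) justifying the limit in which the threshold intervals around the $t_i$ shrink to points, which needs a continuity/monotonicity argument for the Brownian-bridge probability defining $T^h$. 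Once these analytic points are secured, the identity is a finite-rank linear-algebra computation: expand the path-integral determinant by the resolvent identity, recognize the chain of $T_{\tau_i,\tau_{i+1}}$'s, and match against the block form of $\id-QK^\ext$.
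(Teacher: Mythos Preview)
Your route---specialise Theorem~\ref{thm:DistrH} to step-type thresholds, then convert the path-integral determinant into an extended-kernel determinant via the identity of~\cite{BCR13}---is exactly what the paper carries out in Section~\ref{s:mp} (Propositions~\ref{prop:pathintkernel} and~\ref{prop:useBCR15}). It yields the gap-probability identity~\eqref{BNgapprob}, the second assertion of the theorem. But the paper is explicit that this does \emph{not} establish the first assertion, that $K^\ext$ is the correlation kernel of the point process: ``This is weaker than proving that $K_\ext$ is the correlation kernel\dots\ We prove in Section~\ref{s:directcorr} that $K_\ext$ is actually the correlation kernel.''

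The gap in your argument is the final inference. You write that, knowing (via Karlin--McGregor/Eynard--Mehta) that the ensemble is determinantal with \emph{some} kernel $K'$, and knowing the gap probabilities~\eqref{BNgapprob} hold with $K^\ext$, one concludes $K^\ext$ is (a conjugate of) $K'$. This would be valid if you matched $\det(\id-\chi_E K^\ext)=\det(\id-\chi_E K')$ for \emph{all} bounded Borel sets $E$, since those determine all correlation functions. But~\eqref{BNgapprob} only supplies the equality for sets of the special form $E=\bigcup_i \{\tau_i\}\times[\eta_i,0]$, i.e.\ single intervals touching the barrier at each time. That information is equivalent to the finite-dimensional distributions of the \emph{top} path $B_N$ alone and does not pin down the joint law of the lower paths, hence does not determine the correlation kernel. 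The paper closes this gap by a direct computation in Section~\ref{s:directcorr}: it writes the joint density of all $N$ paths via Karlin--McGregor with the reflected transition kernel~\eqref{defTtilde} (Proposition~\ref{prop:jointdensity}), applies the Eynard--Mehta formula of~\cite{Joh10} to that product-of-determinants structure, and then identifies the resulting biorthogonal functions with $\Phi^n_\tau,\Psi^m_\tau$ through explicit Hermite-polynomial representations (Propositions~\ref{prop:Hermiterepr} and~\ref{prop:phi=phi}). None of this passes through Theorem~\ref{thm:DistrH}.

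A minor point: your claimed mechanism for the invertibility of $\id-K_0$ is wrong. From~\eqref{defK0} the residue at $Z=0$ picks out the $Z^m$-coefficient of $(\sqrt2 r-Z)^n e^{2\sqrt2 rZ}$, which is generically nonzero for all $m\ge0$ because of the exponential factor; $K_0$ is not triangular. Invertibility on $\ell^2(\{0,\dots,N-1\})$ follows instead from $\det(\id-K_0)=\P(B_N(t)<r,\,t\in[0,1])>0$, the $h\equiv r$ case of Theorem~\ref{thm:NBBbelowh}.
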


\subsubsection*{Large $N$ asymptotic result}
Next we take the number of Brownian paths $N\to\infty$.
We choose the scaling in a way that the following condition holds.
The probability that $N$ non-intersecting Brownian bridges stay below the rescaled threshold $r$ should stay asymptotically away from $0$ and $1$.
This means that we need to scale the threshold $r$ as well as time and space as follows:
\begin{equation}
t=\frac{1+TN^{-1/3}}{2},\quad r=\sqrt N+\frac{RN^{-1/6}}{2},\quad h=\sqrt N+\frac{(R+H) N^{-1/6}}{2}
\end{equation}
with $H\leq 0$. Let us first describe ingredients of the limiting correlation kernel.
For any parameter $s$, let
\begin{equation}
\Ai^{(s)}(x)=e^{2s^3/3+xs}\Ai(s^2+x).
\end{equation}
Then we introduce the functions
\begin{equation}\label{defPhihat}
\begin{aligned}
\wh\Phi_T^\xi(U)&=\Ai^{(T)}(R+\xi+U)-\Ai^{(T)}(R+\xi-U),\\
\wh\Psi_T^\zeta(U)&=\Ai^{(-T)}(R+\zeta+U)-\Ai^{(-T)}(R+\zeta-U),
\end{aligned}
\end{equation}
and the shifted GOE kernel
\begin{equation}
\wh K_0(\xi,\zeta)=2^{-1/3}\Ai(2^{-1/3}(2R+\xi+\zeta)).\label{defK0hat}
\end{equation}
The next theorem establishes the convergence of the rescaled kernel and the existence of the hard-edge tacnode process which is the limiting determinantal point process.

\begin{theorem}[The hard-edge tacnode process]\label{thm:asymptotics}
Consider the scaling
\begin{equation}\label{scaling}
t_i=\frac{1+T_iN^{-1/3}}2,\quad r=\sqrt N+\frac{RN^{-1/6}}2,\quad x_i=\sqrt N+\frac{(R+U_i)N^{-1/6}}2.
\end{equation}
Then the extended correlation kernel of $N$ non-intersecting Brownian bridges conditioned to stay below a constant level converges uniformly on compact sets, i.e.
\begin{equation}\label{Kextconv}
\lim_{N\to\infty}\frac{N^{-1/6}}2K_\ext(t_1,x_1;t_2,x_2)=\wh K^\ext(T_1,U_1;T_2,U_2)
\end{equation}
where the limiting kernel $\wh K^\ext$ is given by
\begin{equation}\label{defKexthat}
\wh K^\ext(T_1,U_1;T_2,U_2)=-\id_{T_1<T_2}T_{T_1,T_2}(U_1,U_2)+\int_{\R_+}\!\!\d\xi\int_{\R_+}\!\!\d\zeta\,\wh\Psi_{T_1}^\xi(U_1)(\id-\wh K_0)^{-1}(\xi,\zeta)\wh\Phi_{T_2}^\zeta(U_2)
\end{equation}
where $T_1,T_2\in\R$ and $U_1,U_2\le0$.

As a consequence, the \emph{hard-edge tacnode process $\mathcal T$} exists as the limit of $N$ non-intersecting Brownian bridges conditioned to stay below a constant level under the given scaling.
It is characterized by the following gap probabilities.
For any fixed integer $k$ and $T_1,\dots,T_k\in\R$ and for any compact set $E\subseteq\{T_1,\dots,T_k\}\times\R_-$,
\begin{equation}
\P(\mathcal T\cap E=\emptyset)=\det\big(\id-\wh K^\ext\big)_{L^2(E)}.
\end{equation}
\end{theorem}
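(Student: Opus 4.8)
The plan is to establish the convergence \eqref{Kextconv} by a steepest-descent analysis of the finite-$N$ kernel \eqref{defKexttauu} term by term, and then deduce the existence of the limiting process together with its gap probabilities by the standard Fredholm-determinant continuity argument. First I would rewrite the prefactor: under the scaling \eqref{scaling} one has $\tau_i = \frac14\frac{t_i}{1-t_i} \to \frac14$, and a short computation with $u_i$ from \eqref{deftauu} shows $u_i = \frac{x_i-r}{\sqrt2(1-t_i)} \sim \frac{U_i N^{-1/6}}{2\sqrt2}\cdot(1 + O(N^{-1/3}))$, so the change of variables that makes the Airy scaling appear is $\tau = \frac14 + \frac14 \cdot (\text{something})\, T N^{-1/3}$ and $u = c\, U N^{-1/6}$ for the appropriate constant $c$; the global factor $\frac{N^{-1/6}}{2}\cdot\frac{1}{\sqrt{2(1-t_1)(1-t_2)}}$ is what produces the Jacobians needed to turn the sums $\sum_{n,m=0}^{N-1}$ into integrals $\int_{\R_+}\d\xi\int_{\R_+}\d\zeta$ after setting $n = $ (a linear function of $\xi$ and $N$).

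The core of the argument is the asymptotics of the building blocks $\Phi_\tau^n$, $\Psi_\tau^m$, $K_0$ and the inverse $(\id-K_0)^{-1}$. For $\Phi_\tau^n$ in \eqref{defPhi}, I would substitute $W = $ (critical point) $+ N^{-1/3}\times$(local variable), with $n = $ (linear in $\xi, N$), $\tau = \frac14 + T N^{-1/3}$, $r = \sqrt N + \frac12 R N^{-1/6}$; the integrand $W^n e^{\tau(\sqrt2 r - 2W)^2 - \sqrt2 r W} f_W(\pm u)$ has, after taking logs, a cubic saddle, and a classical Airy-type steepest descent (deforming $\I\R$ to a contour through the saddle along the steepest-descent direction) gives $\Phi_\tau^n(u) \to (\text{Jacobian})\cdot \wh\Phi_T^\xi(U)$ with $\wh\Phi$ as in \eqref{defPhihat}, using the integral representation $\Ai(a) = \frac{1}{2\pi\I}\int e^{z^3/3 - az}\,\d z$ and the identity $\Ai^{(s)}(x) = e^{2s^3/3 + xs}\Ai(s^2+x)$ to absorb the linear-in-$T$ terms. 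The same computation with $\oint_{\Gamma_0} Z^{-(m+1)}\cdots$ handles $\Psi_\tau^m \to \wh\Psi_T^\zeta$, and the double saddle in $K_0(n,m)$ from \eqref{defK0} collapses onto the single Airy function \eqref{defK0hat} — this is the shifted-GOE kernel, exactly as the GOE kernel arises in hard-edge problems. The term $-\id_{\tau_1<\tau_2}T_{\tau_1,\tau_2}(u_1,u_2)$ converges to $-\id_{T_1<T_2}T_{T_1,T_2}(U_1,U_2)$ directly, since $T_{\tau_1,\tau_2}$ in \eqref{defT2} is a difference of heat kernels and under diffusive scaling $\tau_2-\tau_1 \sim \frac14(T_2-T_1)N^{-1/3}$, $u \sim c U N^{-1/6}$ it reproduces the same structure (the diffusion coefficient $2$ and the constants match because of how $\tau$ and $u$ are defined).

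The most delicate point, and what I expect to be the main obstacle, is \emph{not} the pointwise saddle-point limits but (i) obtaining uniform-on-compact control with integrable tail bounds so that the sums $\sum_{n,m=0}^{N-1}$ converge to $\int_{\R_+}\int_{\R_+}$ by dominated convergence, including a uniform Gaussian/Airy-type decay estimate in $\xi,\zeta$ valid for all $N$, and (ii) justifying that $(\id - K_0)^{-1}(n,m)$ — which involves the resolvent, not just $K_0$ itself — converges to $(\id - \wh K_0)^{-1}(\xi,\zeta)$ in a strong enough sense (e.g.\ trace-norm convergence of $K_0$ on the relevant weighted $\ell^2$/$L^2$ space, so that resolvents converge) while staying invertible; one must check $\id - \wh K_0$ is invertible on $L^2(\R_+)$, which follows because $\wh K_0$ is a positive contraction (a shifted Airy/GOE kernel is the kernel of a projection-like operator with norm $<1$ on $\R_+$ for finite $R$, or one cites the analogous estimate from the tacnode literature such as \cite{FV11}). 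Having these, \eqref{Kextconv} follows; for the process statement, since $\wh K^\ext$ is continuous and the convergence is uniform on compacts with adequate decay, $\det(\id - QK^\ext)_{L^2(\{\tau_1,\dots,\tau_k\}\times\R_-)}$ from \eqref{BNgapprob} converges to $\det(\id - \wh K^\ext)_{L^2(E)}$ by continuity of Fredholm determinants under trace-norm convergence; the limiting gap probabilities are consistent (they define a genuine point process) because they are limits of bona fide gap probabilities, so the hard-edge tacnode process $\mathcal T$ exists and has exactly the claimed finite-dimensional law. I would also record, for later use (Remark~\ref{remarkAiry}), that as $R\to\infty$ the resolvent term vanishes and one recovers $-\id_{T_1<T_2}T_{T_1,T_2}$ plus the Airy$_2$ extended kernel, but that is a corollary rather than part of this proof.
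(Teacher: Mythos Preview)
Your overall strategy is sound and would work, but it differs from the paper's route in one substantive way. You propose a direct steepest-descent analysis of the contour integrals \eqref{defPhi}--\eqref{defPsi} and \eqref{defK0}, locating the cubic saddle and extracting the Airy function. The paper instead first rewrites $\Phi_\tau^n$ and $\Psi_\tau^m$ in closed form via the Hermite representations of Proposition~\ref{prop:Hermiterepr} (equations \eqref{phirepr}--\eqref{psirepr}), and then feeds in the classical Plancherel--Rotach asymptotics $n^{1/12}\varphi_n(\sqrt{2n}+s n^{-1/6}/\sqrt 2)\to\Ai(s)$ together with the uniform bounds of Lemma~\ref{lemma:phiasymp} (imported from~\cite{FSW15,Kra04}). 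For $K_0$ the paper does not run a separate double-saddle computation either: it uses the identity \eqref{fug-u} to express $K_0$ as an integral of a product of two rescaled harmonic-oscillator functions and then applies the Airy convolution formula $\int e^{t\lambda}\Ai(s_2+\lambda)\Ai(s_1-\lambda)\,\d\lambda=2^{-1/3}e^{(s_1-s_2)t/2}\Ai(2^{-1/3}(s_1+s_2-t^2/2))$ to land on \eqref{defK0hat}. The advantage of the paper's route is that the hard analytic work (pointwise limits \emph{and} the uniform exponential tails you correctly flag as the crux) is outsourced to already-established Hermite estimates, so Proposition~\ref{prop:asymptotics} is short; your direct approach is more self-contained but requires you to produce those tail bounds by hand along the deformed contours, which is feasible but longer.

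Two small corrections. First, your computation of $u_i$ is off by a factor of $2$: under \eqref{scaling} one has $1-t_i=\tfrac12(1-T_iN^{-1/3})$, so $u_i=(x_i-r)/(\sqrt2(1-t_i))=U_iN^{-1/6}/\sqrt2\cdot(1+O(N^{-1/3}))$, matching the paper's scaling $u=UN^{-1/6}/\sqrt2$ in \eqref{eqscalingB}; the discrete variable is $n=N-\xi N^{1/3}$. Second, your justification for the invertibility of $\id-\wh K_0$ on $L^2(\R_+)$ via ``positive contraction'' is not quite the right mechanism: the cleanest argument is that $\det(\id-\wh K_0)_{L^2(\R_+)}$ equals a shifted GOE Tracy--Widom probability, hence lies in $(0,1)$ for every finite $R$, so the operator is invertible. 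You are right that the resolvent convergence $(\id-K_0)^{-1}\to(\id-\wh K_0)^{-1}$ deserves an explicit sentence; the paper is brief here too, but the exponential bounds \eqref{K0bound} give trace-class control uniformly in $N$, and together with the pointwise limit \eqref{K0conv} this yields trace-norm convergence of the (conjugated) $K_0$, from which resolvent convergence follows by the second resolvent identity.
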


As in~\cite{D12} and in~\cite{DV14}, the soft-edge or hard-edge tacnode process usually has a natural temperature parameter (here is the threshold $R$),
and the derivative of the correlation kernel with respect to the temperature parameter has a low rank structure.
In particular, the temperature derivative of the correlation kernel of the soft-edge tacnode process is rank two,
which was proved in~\cite{D12} to hold for the formulas obtained in~\cite{DKZ10} and in~\cite{FV11} yielding a direct proof for the equivalence of the two formulation.
In~\cite{DV14}, the rank one structure of the temperature derivative of the hard-edge tacnode kernel was shown in the case of non-intersecting squared Bessel processes with integer parameter.
This gives the importance of the next proposition about the derivative with respect to the microscopic position parameter of the threshold since the model studied in the present paper
corresponds to non-intersecting Bessel processes of parameter $1/2$.
The proposition is proved in Section~\ref{s:lemmas}.

\begin{proposition}\label{prop:derivative}
The derivative of the extended correlation kernel of the hard-edge tacnode process with respect to parameter $R$ has rank one, that is,
\begin{equation}
\frac\partial{\partial R}\wh K^\ext(T_1,U_1;T_2,U_2)=-f(T_1,U_1)g(T_2,U_2)
\end{equation}
where
\begin{align}
f(T_1,U_1)&=\int_{\R_+}\d\xi\,\wh\Psi_{T_1}^\xi(U_1)(\id-\wh K_0)^{-1}(\xi,0),\\
g(T_2,U_2)&=\int_{\R_+}\d\zeta\,(\id-\wh K_0)^{-1}(0,\zeta)\wh\Phi_{T_2}^\zeta(U_2).
\end{align}
\end{proposition}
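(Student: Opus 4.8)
The plan is to differentiate the series representation of $\wh K^\ext$ in \eqref{defKexthat} with respect to $R$ and to exhibit the result as an outer product. First I would note that the term $-\id_{T_1<T_2}T_{T_1,T_2}(U_1,U_2)$ from \eqref{defT2} is manifestly $R$-independent, so only the double integral contributes. The $R$-dependence there sits in three places: in the functions $\wh\Phi_{T_2}^\zeta(U_2)$ and $\wh\Psi_{T_1}^\xi(U_1)$ through their arguments $R+\zeta\pm U$, and in the resolvent $(\id-\wh K_0)^{-1}$ through the kernel $\wh K_0(\xi,\zeta)=2^{-1/3}\Ai(2^{-1/3}(2R+\xi+\zeta))$ in \eqref{defK0hat}.

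The key algebraic observation is that, because $\wh\Phi$ and $\wh\Psi$ depend on $R,\xi,\zeta,U$ only through the combinations $R+\xi$ and $R+\zeta$, one has $\partial_R\wh\Phi_{T_2}^\zeta(U_2)=\partial_\zeta\wh\Phi_{T_2}^\zeta(U_2)$ and $\partial_R\wh\Psi_{T_1}^\xi(U_1)=\partial_\xi\wh\Psi_{T_1}^\xi(U_1)$, and similarly $\partial_R\wh K_0(\xi,\zeta)=\partial_\xi\wh K_0(\xi,\zeta)=\partial_\zeta\wh K_0(\xi,\zeta)$ (up to the factor coming from the $2R$ inside the argument, which is why the total $R$-derivative of $\wh K_0$ equals the $\xi$-derivative plus the $\zeta$-derivative). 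Writing $\partial_R$ acting on the resolvent as $(\id-\wh K_0)^{-1}(\partial_R\wh K_0)(\id-\wh K_0)^{-1}$ and then substituting $\partial_R\wh K_0=\partial_\xi\wh K_0+\partial_\zeta\wh K_0$, I would integrate by parts in $\xi$ and in $\zeta$ over $\R_+$. The boundary terms at $+\infty$ vanish by the super-exponential decay of the Airy function, while the boundary terms at $0$ survive. When one collects all four pieces — the derivative hitting $\wh\Psi$, the derivative hitting $\wh\Phi$, and the two pieces from differentiating the resolvent — the bulk (non-boundary) contributions should cancel pairwise via the identities $\int\wh\Psi\,\partial_\xi[\cdot]+\int(\partial_\xi\wh\Psi)[\cdot]=(\text{boundary})$ and analogously in $\zeta$, leaving only the boundary contribution evaluated at $\xi=0$ and $\zeta=0$. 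That surviving term is exactly $-\bigl(\int_{\R_+}\d\xi\,\wh\Psi_{T_1}^\xi(U_1)(\id-\wh K_0)^{-1}(\xi,0)\bigr)\bigl(\int_{\R_+}\d\zeta\,(\id-\wh K_0)^{-1}(0,\zeta)\wh\Phi_{T_2}^\zeta(U_2)\bigr)$, which is the claimed rank-one form $-f(T_1,U_1)g(T_2,U_2)$.

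The main obstacle I expect is bookkeeping the signs and making the integration-by-parts rigorous: one must justify differentiating under the integral and the Fredholm-determinant-level manipulations (differentiating the resolvent, interchanging $\partial_R$ with the infinite-dimensional inverse), and one must confirm that $\wh\Psi_{T_1}^\xi(U_1)$ and $\wh\Phi_{T_2}^\zeta(U_2)$ together with their $\xi$- and $\zeta$-derivatives decay fast enough at $+\infty$ for the boundary terms there to vanish — this follows from the Airy asymptotics and the definition $\Ai^{(s)}(x)=e^{2s^3/3+xs}\Ai(s^2+x)$, but needs the bound on $\wh K_0$ and its resolvent established in the asymptotic analysis of Section~\ref{s:asymptotics}. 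A secondary subtlety is that $\wh\Phi$ and $\wh\Psi$ are antisymmetrized differences $\Ai^{(s)}(R+\cdot+U)-\Ai^{(s)}(R+\cdot-U)$, so one should check that the identity $\partial_R=\partial_\xi$ holds for the difference as a whole (it does, since $R$ enters both terms the same way), and that the evaluation at $\xi=0$ gives a genuinely nonzero function rather than something that vanishes identically by antisymmetry — here it is the resolvent kernel $(\id-\wh K_0)^{-1}(\xi,0)$ that carries the value, not $\wh\Psi$ evaluated at a single point, so no spurious cancellation occurs.
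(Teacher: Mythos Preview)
Your approach is correct and is essentially the paper's: the paper records $\partial_R\wh K_0=(\partial_\xi+\partial_\zeta)\wh K_0$, $\partial_R\wh\Psi=\partial_\xi\wh\Psi$, $\partial_R\wh\Phi=\partial_\zeta\wh\Phi$, then derives via the Neumann series the identity $\partial_R(\id-\wh K_0)^{-1}=(\partial_\xi+\partial_\zeta)(\id-\wh K_0)^{-1}-(\id-\wh K_0)^{-1}\wh K_0(\cdot,0)\,(\id-\wh K_0)^{-1}\wh K_0(0,\cdot)$ (equivalent to your $(\id-\wh K_0)^{-1}(\partial_R\wh K_0)(\id-\wh K_0)^{-1}$ after an integration by parts in the inner variables), and finishes by the same outer-variable integration by parts you describe. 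The one point to keep straight is that the $\xi,\zeta$ in $\partial_R\wh K_0=\partial_\xi\wh K_0+\partial_\zeta\wh K_0$ are the \emph{inner} variables of the operator product $(\id-\wh K_0)^{-1}(\partial_R\wh K_0)(\id-\wh K_0)^{-1}$, so a first integration by parts there is needed to reach the displayed resolvent identity before your cancellation with $\partial_\xi\wh\Psi$ and $\partial_\zeta\wh\Phi$ in the outer variables can take place.
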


In Proposition 1.5 of~\cite{LW17}, it was proved that the odd part of the soft-edge tacnode kernel of~\cite{FV11} coincides
with the extended correlation kernel of the hard-edge tacnode process defined by \eqref{defKexthat}.
We recall the statement below.

\begin{proposition}[Proposition 1.5 of~\cite{LW17}]\label{prop:liechtywang}
Let $\mathcal L^{\lambda,\sigma}_{\rm tac}(T_1,U_1,T_2,U_2)$ be correlation kernel of the soft-edge tacnode process as defined in (1.5) of~\cite{FV11}
where $\lambda$ is the asymmetry parameter and where $\sigma$ is the temperature parameter. The symmetric case corresponds to $\lambda=1$.
Then for any threshold $R\in\R$ for the hard-edge tacnode process,
\begin{equation}
\wh K^{\ext}(T_1,U_1;T_2,U_2)=\mathcal L^{1,2^{2/3}R}_{\rm tac}(T_1,U_1,T_2,U_2)-\mathcal L^{1,2^{2/3}R}_{\rm tac}(T_1,U_1,T_2,-U_2)
\end{equation}
holds.
\end{proposition}

\begin{remark}
It is possible to view the system of non-intersecting Brownian bridges conditioned to stay below a constant threshold $r$
as non-intersecting paths $r-Y_t^{(i)}$ for $i=1,2,\dots,N$ where $Y_t^{(i)}$ are three-dimensional Bessel bridges, hence the results of~\cite{D13} apply.
Since the correlation kernel in~\cite{D13} is expressed with the solution of a $4\times4$ Riemann--Hilbert problem, it is very hard to compare the two kernels.
Proposition~\ref{prop:derivative} is the first step towards this aim.
Although this approach was successful for the soft-edge tacnode process (see~\cite{D12}), the hard-edge tacnode case seems to be more difficult
and the results of~\cite{DV14} and of~\cite{D13} could not be compared so far.
\end{remark}

Theorem~\ref{thm:asymptotics} characterizes the finite dimensional distributions of the limit process,
which does not cover properties such as the limiting probability that the non-intersecting paths stay below a given function.
This can be obtained by performing the large $N$ asymptotics of Theorem~\ref{thm:DistrH}.

\begin{theorem}\label{thm:asymptoticsB}
Consider the top path of $N$ non-intersecting Brownian motions conditioned to stay below $r=\sqrt N+\tfrac12 RN^{-1/6}$ rescaled as
\begin{equation}
{\mathcal B}^R_N(T)=2 N^{1/6}\left( B_N\left(\tfrac12(1+T N^{-1/3})\right)-\sqrt{N}\right).
\end{equation}
Let $T_1<T_2$ be given as well as a function $H\in H^1([T_1,T_2])$ with $H\leq R$.
Then
\begin{equation}\label{eq2.38}
\lim_{N\to\infty} \P\left({\mathcal B}^R_N(T)\leq H(T)\mbox{ for }T\in[T_1,T_2]\right)=\det(\id - \wh K_{T_1}+\wh T^{H-R}_{T_1,T_2}\wh K_{T_2,T_1})_{L^2(\R_-)}
\end{equation}
where $\wh K_{T_1}=\wh K_{T_1,T_1}$ and $\wh K_{T_1,T_2}(U_1,U_2):=\wh K^\ext(T_1,U_1;T_2,U_2)$ defined in \eqref{defKexthat} and
\begin{equation}
\wh T^H_{T_1,T_2}(U_1,U_2)=\frac{\d}{\d U_2} \P_{B(T_1)=U_1}\left(B(T)\le H(T)\mbox{ for }T\in[T_1,T_2],B(T_2)\leq U_2\right)
\end{equation}
with $B(T)$ being a Brownian motion with diffusion coefficient $2$.
\end{theorem}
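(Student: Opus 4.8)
The plan is to take the $N\to\infty$ limit directly in the finite-$N$ identity of Theorem~\ref{thm:DistrH}. The first step is to match the events. Extend $H$ to $[0,1]$ by setting $H\equiv R$ outside $(t_1,t_2)$, with $t_i=\tfrac12(1+T_iN^{-1/3})$, and put $h(t)=\sqrt N+\tfrac12 H\big((2t-1)N^{1/3}\big)N^{-1/6}$. Then $h$ obeys~\eqref{defhr} (with these $t_1,t_2$ and $r=\sqrt N+\tfrac12RN^{-1/6}$), $h\le r$ since $H\le R$, and $\{{\mathcal B}^R_N(T)\le H(T),\ T\in[T_1,T_2]\}$ together with the conditioning event coincides, up to a null set, with $\{B_N(t)<h(t),\ t\in[0,1]\}$. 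Hence for every $N$ the finite-$N$ probability $\P({\mathcal B}^R_N(T)\le H(T)$ for $T\in[T_1,T_2])$ equals the right side of~\eqref{NBBbelowhCond}, i.e.\ $\det\big(\id-K_{\tau_1}+T^h_{\tau_1,\tau_2}K_{\tau_2,\tau_1}\big)_{L^2(\R_-)}$ with $\tau_i=\tfrac14\tfrac{t_i}{1-t_i}$, and the task reduces to the large-$N$ behaviour of this Fredholm determinant.

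The second step is to rescale $\R_-$ by the space scale $c_N$ dictated by~\eqref{deftauu} (so $c_N\sim\tfrac1{\sqrt2}N^{-1/6}$): the induced conjugation multiplies each kernel by one factor of $c_N$, which is precisely the normalization of Theorem~\ref{thm:asymptotics}. Since $K_{\tau_1}=K^\ext(\tau_1,\cdot;\tau_1,\cdot)$ and, because $\tau_1<\tau_2$ for large $N$, $K_{\tau_2,\tau_1}=K^\ext(\tau_2,\cdot;\tau_1,\cdot)$ by~\eqref{defKexttauu}, the asymptotic analysis of Section~\ref{s:asymptotics} already yields $c_NK_{\tau_1}(c_N\cdot,c_N\cdot)\to K_{T_1}$ and $c_NK_{\tau_2,\tau_1}(c_N\cdot,c_N\cdot)\to K_{T_2,T_1}$ locally uniformly on $\R_-$, together with the convergences $\Phi^n_\tau\to\wh\Phi^\xi_T$, $\Psi^m_\tau\to\wh\Psi^\zeta_T$, $K_0\to\wh K_0$, the Riemann-sum passage $\sum_{n,m=0}^{N-1}\to\int_{\R_+}\!\int_{\R_+}$, $(\id-K_0)^{-1}\to(\id-\wh K_0)^{-1}$, and superexponential tail bounds uniform in $N$. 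The only genuinely new inputs are the analysis of $T^h_{\tau_1,\tau_2}$ and the fact that the determinant lives on the noncompact $\R_-$.

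For $T^h_{\tau_1,\tau_2}$ I would invoke Brownian scale invariance. From $t_i=\tfrac12(1+T_iN^{-1/3})$ one gets $\tau_i=\tfrac14+\tfrac{T_i}2N^{-1/3}+O(N^{-2/3})$, so the absorbed Brownian bridge in~\eqref{defT1} (diffusion coefficient $2$) runs over a $\tau$-window of length $\sim\tfrac12(T_2-T_1)N^{-1/3}$; under the space rescaling $u=c_NU$ the space/time exponents are Brownian-compatible ($\tfrac13=2\cdot\tfrac16$), and the $1-t\to\tfrac12$ factors only rescale the variance by a bounded amount, so the rescaled bridge converges to a diffusion-coefficient-$2$ Brownian bridge on $[T_1,T_2]$. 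Because $\tfrac{4\tau}{1+4\tau}=t$ identically and $h(t)-r=\tfrac12(H(T)-R)N^{-1/6}$, the rescaled obstacle $\wt h(\tau)=\tfrac{1+4\tau}{\sqrt2}[h(t)-r]$ converges to the rescaled profile of $H-R\le0$, so the absorbed-bridge probability converges to the one defining $T^{H-R}_{T_1,T_2}$; the outer $\tfrac{\d}{\d v}$ contributes $c_N^{-1}$, which cancels the conjugation factor, giving $c_NT^h_{\tau_1,\tau_2}(c_N\cdot,c_N\cdot)\to T^{H-R}_{T_1,T_2}$. To pass the limit inside the Fredholm determinant on $L^2(\R_-)$ I would dominate: the absorbed-bridge density and its $v$-derivative are bounded, uniformly in $N$, by a fixed Gaussian in $(u,v)$ (barriers only lower the free heat kernel), which with the Airy decay of $K_{\tau_2,\tau_1}$ makes $T^h_{\tau_1,\tau_2}K_{\tau_2,\tau_1}$ dominated by an $N$-independent trace-class operator; continuity of $\det$ on trace class then yields~\eqref{eq2.38}.

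The step I expect to be the main obstacle is this uniform control of $T^h_{\tau_1,\tau_2}$: one must show that differentiating the absorbed-bridge probability in the endpoint $v$ creates no spurious blow-up under the joint $N^{1/3}/N^{1/6}$ rescaling, and that replacing the finite-$N$ data $(\tau_1,\tau_2,\wt h)$ by their limits costs only $o(1)$ in trace norm; this requires a quantitative (e.g.\ reflection-principle based) bound on the density of a barrier bridge together with its monotonicity in the obstacle. A secondary, purely bookkeeping, point is to track all constants ($\tfrac12$, $\sqrt2$, the Jacobians $1-t_i\to\tfrac12$) so that every power of $N$ cancels and the limit is exactly $\det\big(\id-K_{T_1}+T^{H-R}_{T_1,T_2}K_{T_2,T_1}\big)$ with $K_{T_1,T_2}$ normalized as in~\eqref{defKexthat}.
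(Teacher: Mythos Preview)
Your proposal is correct and follows essentially the same route as the paper: identify the event with the left side of~\eqref{NBBbelowhCond} via the change of variables $h(t)=\sqrt N+\tfrac12H(T)N^{-1/6}$, apply Theorem~\ref{thm:DistrH}, rescale by $c_N=N^{-1/6}/\sqrt2$, use the kernel asymptotics from Proposition~\ref{prop:asymptotics} for $K_{\tau_1}$ and $K_{\tau_2,\tau_1}$, and invoke Brownian scaling to get $c_N\,T^h_{\tau_1,\tau_2}(c_N\cdot,c_N\cdot)\to T^{H-R}_{T_1,T_2}$. The paper's own proof is in fact terser than yours on the points you flag as obstacles (it simply asserts the Brownian scaling limit for $T^h$ and refers back to the bounds of Proposition~\ref{prop:asymptotics} for the determinant convergence), so your more explicit discussion of the barrier-bridge density bound and the trace-class domination on the noncompact $\R_-$ is a welcome elaboration rather than a departure.
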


Finally, let us discuss the large $R$ limit. As $R\to\infty$, the constraint $H\leq R$ becomes trivially satisfied and thus we should recover
\begin{equation}\label{ew2.39}
\lim_{R\to\infty} \det(\id - \wh K_{T_1}+\wh T^{H-R}_{T_1,T_2}\wh K_{T_2,T_1})_{L^2(\R_-)} =\P({\cal A}_2(T)-T^2\leq H(T),T\in[T_1,T_2])
\end{equation}
where ${\cal A}_2$ is the Airy$_2$ process.
We verify it below.
Consider the entry $(U,U')$ of the kernel on the left-hand side of \eqref{ew2.39}.
Applying the change of variables $U\to U-R$ and $U'\to U'-R$, we have that the right-hand side of \eqref{eq2.38} is the Fredholm determinant on $L^2((-\infty,R))$ with kernel
\begin{equation}
- \wh K_{T_1}(U-R,U'-R)+\int\d V \wh T^{H-R}_{T_1,T_2}(U-R,V-R) \wh K_{T_2,T_1}(V-R,U'-R).
\end{equation}
It is easy to verify that
\begin{equation}\label{eqAiryKernel}\begin{aligned}
&\lim_{R\to\infty} \wh K^\ext(T,U-R;T',U'-R)\,e^{2(T^3-{T'}^3)/3+T U-T'U'}\\
&\qquad= K_{\Ai}(T,U+T^2;T',U'+{T'}^2)\\
&\qquad=-\frac{e^{-\frac{(U-U')^2}{4 (T'-T)}}}{\sqrt{4\pi (T'-T)}}\id_{T<T'} + \int_{\R_+} \d\xi\,e^{\xi(T'-T)}\Ai(\xi+U+T^2)\Ai(\xi+U'+{T'}^2)
\end{aligned}\end{equation}
where $K_{\Ai}$ is know as the extended Airy kernel~\cite{PS02,Jo03}.
Indeed, as $R\to\infty$, $(\id-\wh K_0)^{-1}\to \id$, but also $\wh\Phi_T^\xi(U-R) e^{-2 T^3/3-T U}\to e^{T \xi}\Ai(\xi+U+T^2)$, and $\wh\Psi_T^\zeta(U-R) e^{2 T^3/3+T U}\to e^{-T \zeta}\Ai(\zeta+U+T^2)$.

These asymptotics imply that in the $R\to\infty$ limit our Fredholm determinant is on $L^2(\R)$ with kernel
\begin{equation}
- K_{\rm Ai}(T_1,U+T_1^2;T_1,U'+T_1^2)+\int\d V \frac{e^{\frac23 T_1^3+T_1 U}}{e^{\frac23 T_2^3+T_2 V}}\wh T^{H}_{T_1,T_2}(U,V) K_{\rm Ai}(T_2,V+T_2^2;T_1,U'+T_1^2)
\end{equation}
after the same conjugation as in \eqref{eqAiryKernel}.
Finally, by the change of variables $U\to U-T_1^2$, $U'\to U'-T_1^2$, and $V\to V-T_2^2$, the kernel becomes
\begin{equation}
- K_{\rm Ai}(T_1,U;T_1,U')+\int\d V \frac{e^{-\frac13 T_1^3+T_1 U}}{e^{-\frac13 T_2^3+T_2 V}} \wh T^{H}_{T_1,T_2}(U-T_1^2,V-T_2^2) K_{\rm Ai}(T_2,V;T_1,U').
\end{equation}
By Theorem~2 and~3 of~\cite{CQR11}, the determinant of this kernel on $L^2(\R)$ is equal to the the right-hand side of \eqref{ew2.39} as expected\footnote{In Theorem~3 of~\cite{CQR11}
there is a misprint: in the Gaussian factor, one should replace $x$ by $x-\ell^2$ and $y$ by $y-r^2$, as it can be easily verified by comparing with formula preceding Theorem~3.}.

\begin{remark}
There are two natural ways to obtain the hard-edge tacnode process as the limit of non-intersecting Brownian bridges conditioned to stay below a constant level.
The first option is what we follow in the present paper:
we keep the number of paths fixed first and we characterize the distribution of the paths conditioned to stay below a constant level for $[0,1]$, see Theorem~\ref{thm:corrkernel}.
Then we let the number of paths $N\to\infty$ as it is done in Theorem~\ref{thm:asymptotics}.

An alternative approach is that one imposes the condition that the Brownian bridges stay under a constant level on a fixed interval $[a,b]$ with $0<a<b<1$ and one lets the number of paths $N\to\infty$ first.
Then the limit is an Airy$_2$ process conditioned to stay below a parabola for a fixed finite interval (compare with \eqref{ew2.39} for constant $H$).
In the second step, by letting this interval grow to $\R$, the same hard-edge tacnode process is obtained as in Theorem~\ref{thm:asymptotics}.

The fact that the two different ways of taking the limit gives the same result is not obvious, but we do not prove it here.
The reason why the first way is more interesting is that the Airy$_2$ process conditioned to stay below a parabola for a fixed finite interval
i.e.\ the object which arises in the intermediate step in the second approach is know, its distribution is given in~\cite{CQR11}.
\end{remark}

\section{Relation to the six-vertex model and the Aztec diamond}\label{s:6vertex}
The six-vertex model is a statistical mechanics model with short range interaction which is however sensitive to the boundary conditions.
For instance, imposing the so-called domain wall boundary conditions (DWBC), it was noticed in~\cite{KZJ00} that it has a macroscopic influence on the system.
In this setting, the model has two free parameters.
When these parameters satisfy a given equation, the system becomes ``free-fermion'' and there is a (many-to-one) mapping to the Aztec diamond~\cite{ZJ00}.
For the free-fermion case, one can associate a set of non-intersecting lines to the six-vertex configurations, from which the Aztec diamond configurations can be recovered~\cite{FS06}.
These are illustrated in Figure~\ref{Fig6vertex}.
\begin{figure}
\begin{center}
\includegraphics[angle=-45,height=3cm]{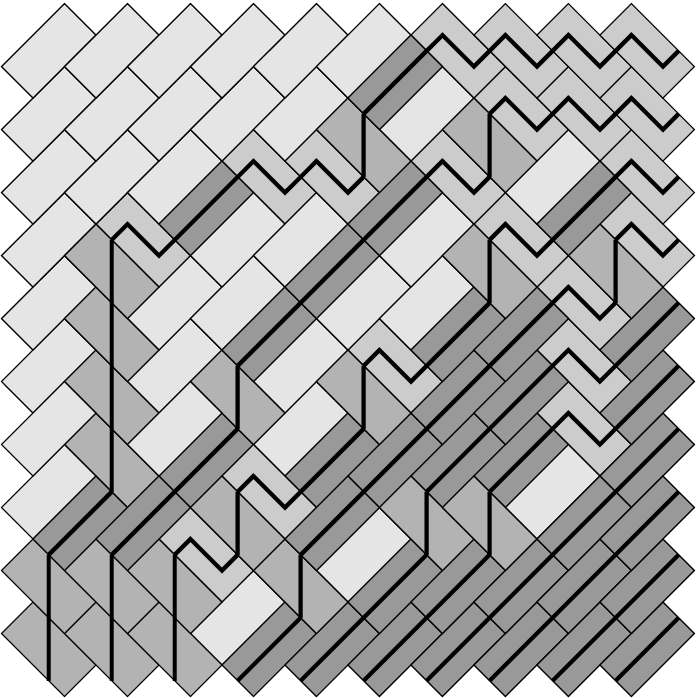}
\hspace*{2em}
\includegraphics[angle=-45,height=4cm]{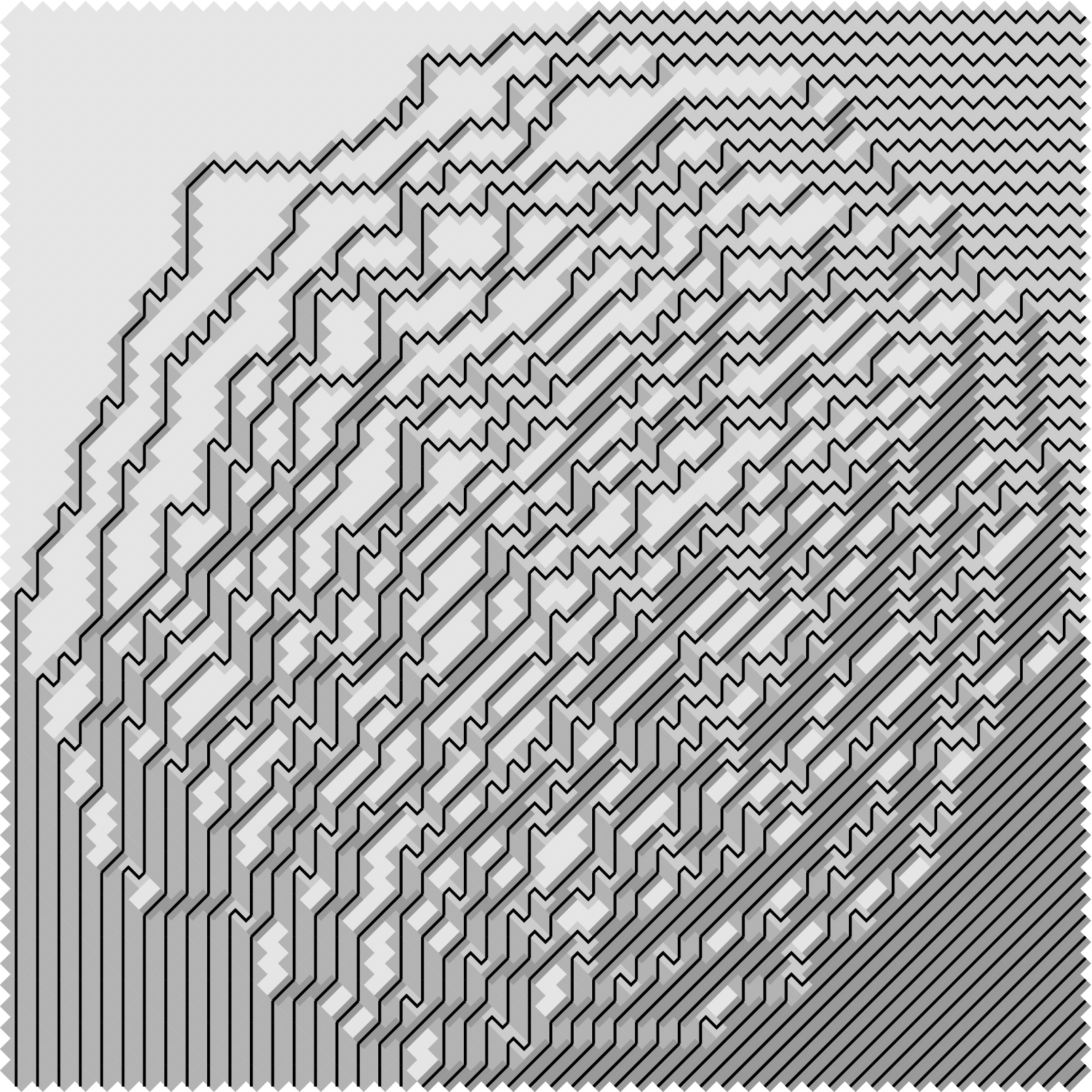}
\caption{Illustration of the non-intersecting line ensemble for an Aztec diamond/six-vertex model with DWBC of size $N=10$ (left) and size $N=50$ (right).}
\label{Fig6vertex}
\end{center}
\end{figure}

In the recent papers on the six-vertex model~\cite{CP13b,CP15,CPS16,CPS16b},
questions concerning the limit shape and correlation functions have been addressed for the six-vertex model also for other domains.
In particular, domains obtained from a square by cutting off a triangle or a rectangle from the corner were considered with DWBC.
In terms of the Aztec diamond, this corresponds to conditioning the dominoes in the top corner to be all fixed and horizontal.
The fixed dominoes form the region which has been cut out.

The Aztec diamond has been studied very well.
In particular, denote the size of the Aztec diamond by $N$.
One can think of lines in discrete time $t\in [-N,N]$.
\begin{theorem}[Theorem~1.1 of~\cite{Jo03}]
Denote by $X_N(t)$ the top line of the Aztec diamond at time $t$.
Then
\begin{equation}\label{eqAztec}
\frac{X_N(2^{-1/6}N^{2/3}T)-N/\sqrt{2}}{2^{-5/6}N^{1/3}}\to {\mathcal A}_2(T)-T^2
\end{equation}
in the sense of finite dimensional distributions.
Here ${\mathcal A}_2$ is the Airy$_2$ process.
\end{theorem}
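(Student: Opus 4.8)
The plan is to realize the uniformly random domino tilings of the Aztec diamond of size $N$ as a determinantal point process and then to run a steepest-descent analysis of its correlation kernel near the point where the arctic circle is tangent to the boundary. First I would use the bijection (the line-ensemble picture of Figure~\ref{Fig6vertex}, following~\cite{FS06,ZJ00,Jo03}) between such tilings and a system of $N$ non-intersecting lattice paths with prescribed endpoints, living in discrete time $t\in[-N,N]$. By the Lindstr\"om--Gessel--Viennot/Karlin--McGregor formula together with the Eynard--Mehta theorem, the positions of the paths at the various times form a determinantal process, and I would compute its extended correlation kernel $K_N$ in closed form; for the uniform Aztec diamond the single-time marginal is the Krawtchouk ensemble, and $K_N$ admits a double contour integral representation. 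Deriving this explicit representation is the first key step.

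By the path ordering, the top line $X_N(t)$ is the position of the rightmost particle of the process at time $t$, so for any times $t_1,\dots,t_k$ and heights $a_1,\dots,a_k$ the event $\{X_N(t_i)\le a_i,\ i=1,\dots,k\}$ is a gap event and
\[
\P\!\left(X_N(t_i)\le a_i,\ i=1,\dots,k\right)=\det\!\left(\id-K_N\right)_{L^2\left(\bigcup_i\{t_i\}\times(a_i,\infty)\right)}.
\]
The second step is to fix the edge scaling $t_i=2^{-1/6}N^{2/3}T_i$ and $a_i=N/\sqrt2+2^{-5/6}N^{1/3}(U_i+o(1))$ dictated by the tangency point, and to perform a saddle-point analysis of the double integral. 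Away from the edge the phase has two simple saddles, but at the tangency point they coalesce into a double saddle; after the above rescaling the local contribution near this double saddle is governed by the Airy function, and one obtains
\[
2^{-5/6}N^{1/3}\,K_N\!\left(t_1,a_1;t_2,a_2\right)\ \longrightarrow\ K_{\Ai}\!\left(T_1,U_1+T_1^2;T_2,U_2+T_2^2\right)
\]
uniformly on compact sets, the shift $-T^2$ coming from the quadratic dependence of the merged saddle on the time parameter.

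The third step is to upgrade this uniform-on-compacts convergence of the kernel to convergence of the Fredholm determinants, hence of the finite-dimensional distributions. This requires Gaussian/exponential decay bounds on the rescaled kernel that are uniform in $N$, so as to dominate the Fredholm expansion (equivalently, trace-norm convergence of the suitably conjugated kernels on $L^2(\bigcup_i\{T_i\}\times[M,\infty))$), together with a no-escape-of-mass argument at $-\infty$; then
\[
\P\!\left(X_N(t_i)\le a_i,\ i\right)\longrightarrow\det\!\left(\id-K_{\Ai}\right)_{L^2\left(\bigcup_i\{T_i\}\times(U_i,\infty)\right)}=\P\!\left({\mathcal A}_2(T_i)-T_i^2\le U_i,\ i\right),
\]
which is the asserted convergence in the sense of finite-dimensional distributions. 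I expect the main obstacle to be precisely this asymptotic step: deforming the contours so that they cross the coalescing saddles in the correct directions, handling the transition between the $T_1<T_2$ and $T_1\ge T_2$ parts of the extended kernel (the explicit Gaussian term), and producing the uniform decay estimates needed for the Fredholm limit. The combinatorial input — the bijection and the exact kernel — is comparatively routine by comparison; the steepest-descent bookkeeping is where the real work lies.
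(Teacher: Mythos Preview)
The paper does not contain a proof of this statement: it is quoted verbatim from Johansson~\cite{Jo03} as background for the discussion in Section~\ref{s:6vertex}, and the present paper neither re-proves nor sketches it. So there is no ``paper's own proof'' to compare against here.

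That said, your outline is essentially the strategy carried out in~\cite{Jo03}: the line-ensemble picture gives a determinantal process whose one-time marginal is the Krawtchouk ensemble, the extended kernel has a double contour integral representation, and steepest descent at the edge produces the extended Airy kernel with the parabolic shift. The anticipated difficulties you flag (coalescing saddles, treatment of the heat-kernel part of the extended kernel, uniform tail bounds for the Fredholm expansion) are exactly the ones Johansson handles. So your plan is correct and matches the original source, but it is not something this paper itself does.
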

The result is derived by analyzing the point process of the lines.
Consider the $(N^{2/3},N^{1/3})$ windows around the top line of Figure~\ref{Fig6vertex}, i.e.\ if $(t,x)$ denotes the coordinates of the lines in Figure~\ref{Fig6vertex}, one considers
\begin{equation}\label{eqScalingAztec}
(t,x)=(2^{-1/6}N^{2/3}T, N/\sqrt{2}+2^{-5/6}N^{1/3}U).
\end{equation}
Then under this scaling, the lines converge to a determinantal point process with correlation kernel given by the extended Airy kernel, see~\eqref{eqAiryKernel}.
This is the same limit as the appropriate scaling limit obtained from $N$ non-intersecting Brownian bridges as $N\to\infty$.
Notice that the scaling of the horizontal and vertical directions is compatible with the Brownian scaling
(as it is the case for the limit process since the Airy$_2$ process is locally Brownian~\cite{Ha07,CH11,CP15b}).

\emph{$L$-shaped case}: Under the scaling \eqref{eqScalingAztec},
cutting out a square from the top of the Aztec diamond such that its lower tip is at height $N/\sqrt{2}+2^{-5/6}N^{1/3}R$
is asymptotically equivalent to forbidding only a vertical line segment down to the tip of the square.
Denote by $X_N^R$ the top line in this case.
Then, from the above discussion, we expect the following:
\begin{conjecture}
Define
\begin{equation}
X_N^{R,{\rm resc}}(T)=\frac{X_N^R(2^{-1/6}N^{2/3}T)-N/\sqrt{2}}{2^{-5/6}N^{1/3}}.
\end{equation}
Then for any given $T_1<T_2<\ldots<T_k$ and $U_1,\ldots,U_k\leq R$,
\begin{equation}
\lim_{N\to\infty} \P\left(\bigcap_{\ell=1}^k \{X_N^{R,{\rm resc}}(T_\ell)\leq U_\ell\}\right)
=\frac{\P\left(\bigcap_{\ell=1}^k\{{\mathcal A}_2(T_\ell)-T_\ell^2\leq U_\ell\}\cap \{{\mathcal A}_2(0)\leq R)\}\right)}{\P({\mathcal A}_2(0)\leq R)}
\end{equation}
where ${\mathcal A}_2$ is the Airy$_2$ process~\cite{PS02,Jo03}.
As a consequence
\begin{equation}
\lim_{N\to\infty} \P(X_N^{R,{\rm resc}}\leq U)=\frac{F_{\rm GUE}(\min\{U,R\})}{F_{\rm GUE}(R)},
\end{equation}
where $F_{\rm GUE}$ is the GUE Tracy--Widom distribution function~\cite{TW94}.
\end{conjecture}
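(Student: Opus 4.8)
\emph{Proof proposal.} The plan is to realise $X_N^{R}$ as the top line of the ordinary Aztec diamond conditioned on a single asymptotic event, and then to feed this into the extended Airy kernel convergence recalled right after Theorem~1.1 of~\cite{Jo03}. Let $\mathcal E_N$ denote the event that the top line $X_N$ of the unconstrained Aztec diamond avoids the removed corner square. In the coordinates \eqref{eqScalingAztec} the two lower edges of that square issue from the tip $(T,U)=(0,R)$ with slopes of order $N^{1/3}$, since a unit displacement in the rescaled height $U$ corresponds to a displacement of order $N^{-1/3}$ in the rescaled time $T$; hence for every fixed $T\neq0$ the forbidden region lies at height $R+cN^{1/3}|T|\to+\infty$ for a fixed $c>0$. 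The first step is to show that $\mathcal E_N$ coincides, up to an event of probability $o(1)$, with the single-time event $\{X_N^{\rm resc}(0)\le R\}$: the inclusion $\mathcal E_N\subseteq\{X_N^{\rm resc}(0)\le R\}$ holds up to an $O(N^{-1/3})$ shift of the level, and for the converse one excludes an excursion of $X_N^{\rm resc}$ above these very steep edges using one-sided upper-tail deviation bounds for the Aztec line ensemble for $T$ away from $0$, together with the local modulus of continuity of the top line near $T=0$. In particular $\P(\mathcal E_N)\to\P({\mathcal A}_2(0)\le R)=F_{\rm GUE}(R)>0$.

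Next one writes
\begin{equation*}
\P\Big(\bigcap_{\ell=1}^k\{X_N^{R,{\rm resc}}(T_\ell)\le U_\ell\}\Big)
=\frac{\P\big(\bigcap_{\ell=1}^k\{X_N^{\rm resc}(T_\ell)\le U_\ell\}\cap\mathcal E_N\big)}{\P(\mathcal E_N)} .
\end{equation*}
By the first step the numerator differs by $o(1)$ from the $(k+1)$-point gap probability of the Aztec line ensemble with the extra marginal $\{X_N^{\rm resc}(0)\le R\}$. By the extended Airy kernel convergence \eqref{eqAiryKernel} this gap probability converges to $\P\big(\bigcap_{\ell=1}^k\{{\mathcal A}_2(T_\ell)-T_\ell^2\le U_\ell\}\cap\{{\mathcal A}_2(0)\le R\}\big)$, the convergence holding at the prescribed levels because the determinantal gap probabilities are continuous in them; the denominator converges to $F_{\rm GUE}(R)$. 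Dividing gives the first displayed limit in the conjecture.

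The second display is the case $k=1$, $T_1=0$, $U_1=U$ of the first one: since $\{{\mathcal A}_2(0)\le U\}\cap\{{\mathcal A}_2(0)\le R\}=\{{\mathcal A}_2(0)\le\min\{U,R\}\}$ and ${\mathcal A}_2(0)$ has the GUE Tracy--Widom law $F_{\rm GUE}$, the ratio equals $F_{\rm GUE}(\min\{U,R\})/F_{\rm GUE}(R)$. If instead $\{X_N^{R,{\rm resc}}\le U\}$ were read as the event that the whole rescaled top line stays below $U$, one would need in addition tightness of $X_N^{\rm resc}$ on compact time intervals and the upper-tail bounds to restrict the supremum to a compact window, and the right-hand side would then involve the law of $\max_{T}({\mathcal A}_2(T)-T^2)$.

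\textbf{Main obstacle.} The weak convergence itself is classical; the work lies entirely in the \emph{uniform} control of the Aztec top line. The inputs from~\cite{Jo03} are finite dimensional and valid on compact sets, whereas here $X_N$ must be controlled simultaneously over the whole time window $[-cN^{1/3},cN^{1/3}]$: to discard the parts of $\mathcal E_N$ away from $T=0$ in the first step, and, in the supremum reading, to localise the maximum. This calls for quantitative, spatially uniform, one-sided deviation estimates for the Aztec line ensemble of the kind available for Airy-type line ensembles, and a matching modulus-of-continuity bound near $T=0$; assembling these, rather than establishing any new identity, is the real content.
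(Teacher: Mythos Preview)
The paper does not prove this statement; it is explicitly labeled a \emph{Conjecture}. The only justification the paper offers is the one-sentence heuristic preceding it: under the scaling \eqref{eqScalingAztec}, cutting out a square whose tip sits at rescaled height $R$ is ``asymptotically equivalent to forbidding only a vertical line segment down to the tip of the square''. There is no proof to compare against.

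Your proposal is precisely an attempt to turn that heuristic into a proof, and the architecture is the right one: identify the conditioning event $\mathcal E_N$ with the single-time event $\{X_N^{\rm resc}(0)\le R\}$ up to $o(1)$ in probability, then invoke the finite-dimensional Airy convergence on $k+1$ times. Your slope computation (the removed edges have slope of order $N^{1/3}$ in $(T,U)$-coordinates) is correct, and the deduction of the second display from the first via $k=1$, $T_1=0$ is fine.

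However, you have correctly located the genuine gap yourself, and it is exactly why the paper leaves this as a conjecture rather than a theorem. The equivalence $\mathcal E_N\approx\{X_N^{\rm resc}(0)\le R\}$ requires controlling the top Aztec line on a time window of rescaled width $\sim N^{1/3}$, not on a compact window. The inputs from~\cite{Jo03} are finite-dimensional and do not give this. What you would need is (i) a quantitative one-point upper-tail bound for $X_N^{\rm resc}(T)$ uniformly in $T$ over a growing window, strong enough to beat a union bound over $O(N)$ lattice times, and (ii) a modulus-of-continuity estimate near $T=0$ at the $N^{1/3}$-slope scale. For Brownian watermelons or for the limiting Airy line ensemble such estimates exist (e.g.\ via the Brownian Gibbs property~\cite{CH11}); for the discrete Aztec top line they are not supplied by the references in the paper and would have to be established separately. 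Until those two ingredients are in hand, your argument remains a proof \emph{strategy}, which is all the paper itself claims.
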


\emph{Pentagonal case}: Under the scaling \eqref{eqScalingAztec}, cutting out a triangle on the top corner at height $N/\sqrt{2}+2^{-5/6}N^{1/3}R$
becomes asymptotically a conditioning to stay below a fixed height $R$.
Denote by $X_N^R$ be the top line in this case.
Then, we expect to have the following:
\begin{conjecture}
Define
\begin{equation}
X_N^{R,{\rm resc}}(T)=\frac{X_N^R(2^{-1/6}N^{2/3}T)-N/\sqrt{2}}{2^{-5/6}N^{1/3}}.
\end{equation}
Then
\begin{equation}
\lim_{N\to\infty} \P\left(\bigcap_{\ell=1}^k \{X_N^{R,{\rm resc}}(T_\ell)\leq U_\ell\}\right) = \det\left(\id-\wh K^\ext\right)_{L^2(E)}
\end{equation}
with the set $E=\{(T_1,[U_1-R,0])\times \ldots\times (T_k,[U_k-R,0])\}$.
\end{conjecture}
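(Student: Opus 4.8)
The plan is to reproduce, in the discrete setting, the chain of arguments used here in the Brownian case (Theorems~\ref{thm:NBBbelowh}--\ref{thm:asymptotics}). As a first step I would trade the pentagonal (triangle-cut) Aztec diamond for the cleaner object: the line ensemble of the \emph{full} DWBC Aztec diamond conditioned so that its top line $X_N(t)$ stays below the constant level $c_N=N/\sqrt2+2^{-5/6}N^{1/3}R$ for every discrete time $t\in[-N,N]$, and argue that under the scaling \eqref{eqScalingAztec} the two models have the same finite-dimensional limit. This reduction is geometric: in the Airy window around the point where the arctic curve is tangent to the relevant side of the square, the effective boundary of the removed triangle is a line at perpendicular distance $\Theta(N^{1/3})$ parametrised precisely by $R$, so in rescaled coordinates $(T,U)$ it becomes the single constraint $U\le R$ for all $T\in\R$; outside a $t$-window of width $\Theta(N^{2/3})$ the top line is, by the arctic-curve and large-deviation estimates behind \cite{Jo03}, at distance $\gg N^{1/3}$ below $c_N$, so the conditioning there is asymptotically vacuous. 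This is the step I would write most carefully, since it is exactly what pins down the sense in which ``the removed triangle is tangential to the limit shape''.

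Next I would put the conditioned Aztec ensemble in determinantal form and extract a Fredholm-determinant gap formula, mirroring Proposition~\ref{prop:NguyenRemenik}, Theorem~\ref{thm:NBBbelowh} and Theorem~\ref{thm:corrkernel}. The unconditioned free-fermion DWBC Aztec line ensemble is determinantal with the Krawtchouk-type kernel of \cite{Jo03}; conditioning all paths to stay below $c_N$ keeps it determinantal, either via the Karlin--McGregor / Lindstr\"om--Gessel--Viennot construction with an absorbing wall (reflecting the nearest-neighbour transition kernel), or via the discrete analogue of the path-integral identity of \cite{BCR13} underlying Proposition~\ref{prop:NguyenRemenik}. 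Either way one obtains an extended kernel of the same algebraic shape as \eqref{defKexttauu}: a free (reflected-walk) part minus a correction $\sum_{n,m=0}^{N-1}\Psi^n(\cdot)\,(\id-K_0)^{-1}(n,m)\,\Phi^m(\cdot)$ built from Krawtchouk functions and their reflections about the wall, together with the ratio-of-probabilities identity of Theorem~\ref{thm:DistrH} encoding the conditioning on staying below $c_N$.

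The core of the proof is then the steepest-descent asymptotics of this kernel under \eqref{eqScalingAztec}, following Section~\ref{s:asymptotics}. The Krawtchouk double-contour integrals have the same double-critical-point structure as the Hermite integrals here, so after the $N^{2/3}/N^{1/3}$ rescaling: the reflected Krawtchouk functions converge to $\wh\Phi_T^\xi,\wh\Psi_T^\zeta$ of \eqref{defPhihat} (the $\Ai^{(\pm T)}$ coming from the cubic saddle, the image charges producing the differences), the discrete inner block $\id-K_0$ converges to the shifted GOE kernel $\id-\wh K_0$ of \eqref{defK0hat}, the threshold $c_N$ enters only through the parameter $R$, and the free part gives $-\id_{T_1<T_2}T_{T_1,T_2}(U_1,U_2)$. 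Combined with the gap-probability formula \eqref{BNgapprob} and the Step~1 reduction, this yields $\lim_{N\to\infty}\P(\bigcap_\ell\{X_N^{R,{\rm resc}}(T_\ell)\le U_\ell\})=\det(\id-\wh K^\ext)_{L^2(E)}$ with $\wh K^\ext$ as in \eqref{defKexthat}. One must also dominate the prelimit kernel by an $N$-uniform integrable bound to upgrade pointwise convergence to convergence of the Fredholm determinant, exactly as for Theorem~\ref{thm:asymptotics}.

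The main obstacle is this discrete asymptotic analysis: controlling the inner inversion $(\id-K_0)^{-1}$ uniformly in $N$ (the same spectral input $1\notin\operatorname{spec}\wh K_0$ on $L^2(\R_+)$ used for Theorem~\ref{thm:asymptotics}, plus convergence of the finite-$N$ inverses) and obtaining uniform tail bounds on the reflected Krawtchouk functions, now with extra reflection bookkeeping; a secondary obstacle is making the Step~1 geometric reduction rigorous. An alternative, softer route avoids Steps~2--3 entirely: use the known convergence of the unconditioned Aztec line ensemble to the extended-Airy determinantal field, pass the conditioning event $\{X_N^{R,{\rm resc}}\le R\text{ on }[-L,L]\}$ to the limit for each fixed $L$ (continuity of the event together with the conditioning probability bounded away from $0$ uniformly in $N$ and $L$), and then send $L\to\infty$ using the alternative characterisation of $\mathcal T$ in Remark~\ref{remarkAiry}; the cost there is a uniform interchange-of-limits estimate in $N$ and $L$.
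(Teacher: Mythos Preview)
The statement you are attempting to prove is presented in the paper as a \emph{conjecture}, not a theorem: the paper gives no proof, only the heuristic discussion in Section~\ref{s:6vertex} to the effect that under the Airy scaling \eqref{eqScalingAztec} the pentagonal cut should asymptotically act as a constant height constraint, so that the limiting process ``should be exactly the one we study in this paper.'' There is therefore no proof in the paper to compare your proposal against.

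That said, your outline follows precisely the route the paper's heuristics suggest: transport the Brownian-motion machinery of Theorems~\ref{thm:NBBbelowh}--\ref{thm:asymptotics} to the discrete Krawtchouk/Aztec setting and redo the steepest-descent analysis there. The obstacles you flag --- the geometric reduction from the pentagonal domain to a constant-height conditioning of the full DWBC ensemble, uniform control of $(\id-K_0)^{-1}$ in the discrete setting, and uniform tail bounds on the reflected Krawtchouk functions --- are exactly the pieces one would need to supply to turn the conjecture into a theorem, and the paper makes no claim to have supplied them. Your alternative route via Remark~\ref{remarkAiry} (pass to the Airy limit first and then condition, interchanging the $N\to\infty$ and $L\to\infty$ limits) is also a legitimate strategy the paper mentions but does not carry out. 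In short: your proposal is a reasonable plan of attack on an open problem, not a reconstruction of an existing argument; be aware that what remains is genuine work rather than bookkeeping.
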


\section{Multipoint distribution and heuristics for the correlation kernel}\label{s:mp}

In this section, we consider the process of $N$ non-intersecting Brownian bridges conditioned to stay below a constant level.
First we prove Theorem~\ref{thm:DistrH}, that is, the probability that this conditional process stays below a function of the form \eqref{defhr}
can be written as a Fredholm determinant of the kernel $K_\ext$.
As a consequence, we show that the multipoint distribution of the conditional process also has a Fredholm determinantal form,
which is part of the statement of Theorem~\ref{thm:corrkernel}.
This does not imply that $K_\ext$ is the correlation kernel for the point process of the non-intersecting Brownian bridges, but it gives a potential candidate for it.
The proof that $K_\ext$ is actually the correlation kernel is performed directly in Section~\ref{s:directcorr}.

Our heuristic derivation of the correlation kernel for $N$ non-intersecting Brownian bridges conditioned to stay below a constant
is based on the formula given in Theorem~\ref{thm:NBBbelowh} for the probability that the top path of $N$ Brownian bridges is below a function.
First we verify that the kernel which appears in Theorem~\ref{thm:NBBbelowh} is trace class.

\begin{lemma}\label{lemma:traceclass}
For any function $h:[0,1]\to\R$ and for any fixed integer $N$, the operator with kernel $K_N^h$ given in \eqref{defKN} is trace class on $L^2(\{0,1,\dots,N-1\})$.
\end{lemma}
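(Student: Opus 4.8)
The plan is to show that $K_N^h$ is trace class on $L^2(\{0,1,\dots,N-1\})$ by the cheapest possible route: this is an operator on a \emph{finite-dimensional} space (of dimension $N$), so once we know that every matrix entry $K_N^h(n,m)$ is finite, the operator is automatically trace class (indeed, every operator on a finite-dimensional Hilbert space is trace class). Thus the whole content of the lemma reduces to checking that the double integral
\[
\int_\R \d u \int_\R \d v\, \Phi_{\tau_1}^n(u)\, T_{\tau_1,\tau_2}^h(u,v)\, \Psi_{\tau_2}^m(v)
\]
is absolutely convergent for each fixed pair $0\le n,m\le N-1$.

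First I would record the relevant bounds on the three factors. For $T_{\tau_1,\tau_2}^h(u,v)$: since it is the $v$-derivative of a probability (see \eqref{defT1}), i.e.\ a sub-probability density in $v$ for each fixed $u$, we have $T_{\tau_1,\tau_2}^h(u,v)\ge 0$ and $\int_\R T_{\tau_1,\tau_2}^h(u,v)\,\d v\le 1$ uniformly in $u$; moreover it is supported on $u\le 0$ because of the barrier constraint $\wt b(\tau)\le 0$ on $[\alpha_1,\alpha_2]$, and by the reflection principle it is dominated by the free heat kernel, $0\le T_{\tau_1,\tau_2}^h(u,v)\le \phi_{2(\tau_2-\tau_1)}(v-u)$. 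For $\Phi_{\tau_1}^n$ and $\Psi_{\tau_2}^m$: from the contour-integral definitions \eqref{defPhi}--\eqref{defPsi} with $f_W(u)=e^{(\sqrt2 r-2W)u}$, $g_Z(v)=e^{-(\sqrt2 r-2Z)v}$, one sees that on $u\le 0$ the function $\Phi_{\tau_1}^n(u)$ decays like a Gaussian: deforming the $W$-contour $\I\R$ (the Gaussian factor $e^{\tau(\sqrt2 r-2W)^2}$ has negative real part of the exponent along $\I\R$, so the integral converges and, after a steepest-descent-type shift or a direct bound, yields $|\Phi_{\tau_1}^n(u)|\le C_{n}e^{-c u^2}$ for $u\le 0$ with $c,C_n>0$ depending on $\tau_1,r,n$). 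Similarly $|\Psi_{\tau_2}^m(v)|\le C_m' e^{-c' v^2}$ for $v\le 0$, the residue at $Z=0$ being a polynomial times $e^{-2r^2}$ and the remaining contour contributing Gaussian decay. (Alternatively, and perhaps more transparently, one can use the probabilistic representations of $\Phi$ and $\Psi$ in terms of the heat semigroup on $\R_-$ that underlie Proposition~\ref{prop:compatibility}; these make the Gaussian tails manifest.)

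Combining these: the integrand is bounded in absolute value by $C_n C_m' e^{-cu^2}\,\phi_{2(\tau_2-\tau_1)}(v-u)\,e^{-c'v^2}\,\id_{u\le 0,v\le 0}$, which is plainly integrable over $\R^2$, with the integral bounded by a constant depending only on $n,m,\tau_1,\tau_2,r$. Hence each entry $K_N^h(n,m)$ is finite, and since the index set $\{0,\dots,N-1\}$ is finite the operator $K_N^h$ is trace class (with trace-norm bound $\sum_{n,m=0}^{N-1}|K_N^h(n,m)|<\infty$). The only mildly delicate point — and the one I would write out carefully — is the Gaussian decay of $\Phi_{\tau_1}^n$ and $\Psi_{\tau_2}^m$ on the negative half-line from the oscillatory/closed contour representations; everything else is a one-line domination argument. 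I expect this to be the easiest of the technical lemmas in Section~\ref{s:lemmas}, since no cancellation or uniformity in $N$ is needed here — uniform-in-$N$ control is deferred to the asymptotic analysis.
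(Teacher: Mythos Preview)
Your overall strategy---reduce trace-class on a finite-dimensional space to finiteness of each matrix entry, then bound the double integral---is exactly the paper's approach, and your treatment of $T_{\tau_1,\tau_2}^h$ (support in $\{u\le 0\}$, domination by the free heat kernel $\phi_{2(\tau_2-\tau_1)}(v-u)$) and of $\Phi_{\tau_1}^n$ (Gaussian decay in $u$, which the paper obtains from the Hermite representation \eqref{phirepr}) are both correct.

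There is, however, a concrete error in your bound on $\Psi_{\tau_2}^m$. The function $\Psi_\tau^m(v)$ does \emph{not} have Gaussian decay: the contour $\Gamma_0$ in \eqref{defPsi} is a closed loop around $0$, so the integral equals the residue at $Z=0$ and nothing else---there is no ``remaining contour''. That residue is a polynomial in $v$ multiplied by $e^{\pm\sqrt{2}rv}$ (this is explicit in the Hermite representation \eqref{psirepr}), hence $|\Psi_{\tau_2}^m(v)|$ grows like $Ce^{c|v|}$ as $v\to-\infty$, and the paper only claims this exponential bound. Your asserted inequality $|\Psi_{\tau_2}^m(v)|\le C_m' e^{-c'v^2}$ is false.

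The repair is immediate and is precisely what the paper does: with $|\Psi_{\tau_2}^m(v)|\le Ce^{c|v|}$ one has
\[
\int_\R \d v\,\bigl|T_{\tau_1,\tau_2}^h(u,v)\,\Psi_{\tau_2}^m(v)\bigr|
\le \int_\R \d v\,\phi_{2(\tau_2-\tau_1)}(v-u)\,Ce^{c|v|}\le C'e^{c|u|},
\]
and this exponential growth in $u$ is still beaten by the Gaussian decay $|\Phi_{\tau_1}^n(u)|\le Ce^{-u^2/(8\tau_1)}$, so the $u$-integral is finite. Thus your argument becomes correct once the $\Psi$ bound is downgraded from Gaussian to exponential; after that fix it coincides with the paper's proof.
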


\begin{proof}
Since $N$ is fixed, it is enough to show that $K_N^h(n,m)$ is finite for any $n,m<N$, that is, the double integral in \eqref{defKN} is finite.
By \eqref{defPsi}, one clearly has $|\Psi_{\tau_2}^m(v)|\le Ce^{c|v|}$ for some finite constants $C$ and $c$.
By definition \eqref{defT1}, $|T_{\tau_1,\tau_2}(u,v)|\le\phi_{2(\tau_2-\tau_1)}(y-x)$, hence
\begin{equation}
\int_\R\d v\,|T_{\tau_1,\tau_2}^h(u,v)\Psi_{\tau_2}^m(v)|\le Ce^{c|u|}
\end{equation}
for some finite constants $C$ and $c$.
On the other hand, \eqref{phirepr} shows that $\Phi_{\tau_1}^n(u)$ has a Gaussian decay in $u$, i.e.
\begin{equation}
|\Phi_{\tau_1}^n(u)|\le Ce^{-\frac{u^2}{8\tau_1}}
\end{equation}
for some finite $C$.
This completes the proof.
\end{proof}

For any function $f\in L^2(\R)$, let
\begin{equation}\label{defP}
P_\eta f(x)=\id_{x\ge\eta}f(x),\qquad\ol P_\eta f(x)=\id_{x<\eta}f(x)
\end{equation}
be the projection operators.

\begin{proof}[Proof of Theorem~\ref{thm:DistrH}]
The strategy of the proof is to compare the kernel $K_N^h$ for a general $h$ of the form \eqref{defhmp} to the one which corresponds to the constant $h\equiv r$.
For $h\equiv r$, in the second term on the right-hand side of \eqref{defKN} one has to insert
\begin{equation}
T_{\tau_1,\tau_2}^r=\ol P_0T_{\tau_1,\tau_2}\ol P_0
\end{equation}
by comparing \eqref{defT1}, \eqref{defT2} and \eqref{deftauhtilde}.
Hence the kernel $K_N^r$ for $h\equiv r$ simplifies to
\begin{equation}\label{KNr}
K_N^r(n,m)=\id(n,m)-\int_{\R_-}\d u\int_{\R_-}\d v\,\Phi_{\tau_1}^n(u)T_{\tau_1,\tau_2}(u,v)\Psi_{\tau_2}^m(v)=K_0(n,m)
\end{equation}
as a consequence of Proposition~\ref{prop:compatibility}.

Hence we can write the kernel $K_N^h$ for a general $h$ of the form \eqref{defhr} as
\begin{equation}\label{KNh-K0}
K_N^h(n,m)=K_0(n,m)+\int_\R\d u\int_\R\d v\,\Phi_{\tau_1}^n(u)\left(\ol P_0T_{\tau_1,\tau_2}\ol P_0-T_{\tau_1,\tau_2}^h\right)(u,v)\Psi_{\tau_2}^m(v).
\end{equation}

The conditional probability on the left-hand side of \eqref{NBBbelowhCond} is written as a ratio of two Fredholm determinants: using \eqref{KNr} we get
\begin{equation}\label{condprobFredholmratio}\begin{aligned}
&\P\left(B_N(t)<h(t)\mbox{ for }t\in[0,1]\bigm|B_N(t)<r\mbox{ for }t\in[0,1]\right)\\
&\qquad=\frac{\det(\id-K_N^h)_{\ell^2(\{0,1,\dots,N-1\})}}{\det(\id-K_0)_{\ell^2(\{0,1,\dots,N-1\})}}\\
&\qquad=\det\left(\id-(K_N^h-K_0)(\id-K_0)^{-1}\right)_{\ell^2(\{0,1,\dots,N-1\})}
\end{aligned}\end{equation}
where we used the multiplicative property of the determinant in the second equality.
By the cyclic property of the determinant and by using \eqref{KNh-K0}, \eqref{defKtau} and \eqref{TPsi}, one obtains
\begin{equation}\label{Fredholmratio2}\begin{aligned}
&\det\left(\id-(K_N^h-K_0)(\id-K_0)^{-1}\right)_{\ell^2(\{0,1,\dots,N-1\})}\\
&\qquad=\det\left(\id-\left(\ol P_0T_{\tau_1,\tau_2}\ol P_0-T_{\tau_1,\tau_2}^h\right)
\Psi_{\tau_2}(\id-K_0)^{-1}\Phi_{\tau_1}\right)_{L^2(\R_-)}\\
&\qquad=\det\left(\id-K_{\tau_1}+T_{\tau_1,\tau_2}^h K_{\tau_2,\tau_1}\right)_{L^2(\R_-)}
\end{aligned}\end{equation}
which completes the proof.
\end{proof}

To obtain the multipoint distribution of $N$ non-intersecting Brownian bridges conditioned to be under the constant level $r$ in the time interval $[0,1]$,
we specialize the probability that the top path of $N$ non-intersecting Brownian bridges stays below a function $h$ given by \eqref{defhr}.
Namely, for $0<t_1<\dots<t_k<1$, we consider the function
\begin{equation}\label{defhmp}
h(x)=\left\{\begin{array}{ll} r &\mbox{if } x\neq t_i\mbox{ for }i=1,\dots,k\\ h_i &\mbox{if } x=t_i\end{array}\right.
\end{equation}
for some $h_i\leq r$ for $i=1,\dots,k$.
Since $h$ given by \eqref{defhmp} is not in $H^1([0,1])$, one has to verify that Theorem~\ref{thm:NBBbelowh} can be used.
We prove the following lemma in Section~\ref{s:lemmas}.

\begin{lemma}\label{lemma:NRextend}
Theorem~\ref{thm:NBBbelowh} remains valid for a function $h$ defined in~\eqref{defhmp}.
\end{lemma}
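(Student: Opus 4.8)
The plan is to obtain the function $h$ of \eqref{defhmp} as a limit of genuine $H^1$-functions to which Theorem~\ref{thm:NBBbelowh} already applies, and then pass to the limit on both sides of \eqref{NBBbelowh}. Concretely, for $\varepsilon>0$ small, I would introduce $h^\varepsilon\in H^1([0,1])$ which equals $r$ outside $\bigcup_i(t_i-\varepsilon,t_i+\varepsilon)$, which dips down to the value $h_i$ at $x=t_i$ (say linearly on $[t_i-\varepsilon,t_i]$ and back up on $[t_i,t_i+\varepsilon]$), and which stays $\le r$ throughout; this is of the form \eqref{defhr} with $t_1$ replaced by $t_1-\varepsilon$ and $t_2$ by $t_k+\varepsilon$ (for $\varepsilon$ small enough that the intervals are disjoint and inside $(0,1)$). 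By Theorem~\ref{thm:NBBbelowh}, $\P(B_N(t)<h^\varepsilon(t)\text{ for }t\in[0,1])=\det(\id-K_N^{h^\varepsilon})_{L^2(\{0,\dots,N-1\})}$.

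Next I would argue convergence on each side as $\varepsilon\downarrow0$. On the probabilistic side, the events $\{B_N(t)<h^\varepsilon(t)\text{ for all }t\in[0,1]\}$ decrease in $\varepsilon$ (since $h^\varepsilon$ decreases pointwise as $\varepsilon\downarrow0$ off the points $t_i$, where it is pinned), and their intersection over a sequence $\varepsilon_n\downarrow0$ is exactly $\{B_N(t)<r\text{ for }t\in[0,1],\ B_N(t_i)<h_i\text{ for }i=1,\dots,k\}$: the inclusion ``$\supseteq$'' is clear, and for ``$\subseteq$'' one uses path-continuity of $B_N$ together with the fact that a continuous path strictly below $r$ off the finitely many points $t_i$ and strictly below $h_i$ at each $t_i$ stays below $h^{\varepsilon_n}$ for $n$ large (a compactness/uniform-continuity argument on $[0,1]$; the only delicate region is a neighbourhood of each $t_i$, where $h^{\varepsilon_n}$ interpolates between $h_i$ and $r$, and continuity of $B_N$ at $t_i$ handles it). Hence by monotone convergence of probabilities the left-hand side converges to $\P(B_N(t)<r\text{ for }t\in[0,1],\ B_N(t_i)<h_i,\ i=1,\dots,k)$, which is the quantity claimed in \eqref{NBBbelowh} for $h$ as in \eqref{defhmp}.

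On the Fredholm-determinant side I would show $K_N^{h^\varepsilon}\to K_N^h$ entrywise and that this suffices. Since $N$ is fixed, $L^2(\{0,\dots,N-1\})$ is finite-dimensional, so it is enough that each matrix entry $K_N^{h^\varepsilon}(n,m)\to K_N^h(n,m)$; then $\det(\id-K_N^{h^\varepsilon})\to\det(\id-K_N^h)$ automatically. By \eqref{defKN} the entry is $\id(n,m)-\int\!\!\int \Phi_{\tau_1}^n(u)\,T^{h^\varepsilon}_{\tau_1,\tau_2}(u,v)\,\Psi_{\tau_2}^m(v)\,\d u\,\d v$, with the $\tau_i$ for $h^\varepsilon$ being $\frac14\frac{t_1-\varepsilon}{1-t_1+\varepsilon}$ and $\frac14\frac{t_k+\varepsilon}{1-t_k-\varepsilon}$, converging to $\frac14\frac{t_1}{1-t_1}$ and $\frac14\frac{t_k}{1-t_k}$. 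The kernel $T^{h^\varepsilon}_{\tau_1,\tau_2}(u,v)$ is, by \eqref{defT1}, a bridge probability of the event $\{\wt b(\tau)\le 0\text{ on }[\alpha_1,\alpha_2]\}\cap\{\wt b(\tau)\le\wt h^\varepsilon(\tau)\text{ on }(\tau_1,\tau_2)\}$ times a density; as $\varepsilon\downarrow0$ the constraint $\wt b\le\wt h^\varepsilon$ converges (by the same path-continuity argument, now for the bridge $\wt b$, and bounded convergence over the bridge measure) to the constraint $\wt b(\tau_i)\le\wt h(\tau_i)=\tfrac{1+4\tau_i}{\sqrt2}(h_i-r)$ for $i=1,\dots,k$, i.e.\ to the integral kernel defining $T^h_{\tau_1,\tau_2}$ for $h$ of the form \eqref{defhmp}. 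Using the domination $|T^{h^\varepsilon}_{\tau_1,\tau_2}(u,v)|\le\phi_{2(\tau_2-\tau_1)}(v-u)$ (uniformly for $\varepsilon$ in a compact range, after the elementary continuity of $(\tau_1,\tau_2)\mapsto\phi_{2(\tau_2-\tau_1)}$), together with the Gaussian decay $|\Phi^n_{\tau_1}(u)|\le Ce^{-u^2/(8\tau_1)}$ and the at-most-exponential growth $|\Psi^m_{\tau_2}(v)|\le Ce^{c|v|}$ from the proof of Lemma~\ref{lemma:traceclass}, dominated convergence gives $K_N^{h^\varepsilon}(n,m)\to K_N^h(n,m)$, and the proof is complete.

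The main obstacle I expect is making the ``$\subseteq$'' direction of the event identification fully rigorous near the points $t_i$: one must check that a continuous path (Brownian bridge) which is strictly below $r$ everywhere except possibly touching $r$ in the limit near $t_i$, and strictly below $h_i$ at $t_i$, indeed falls below the linear interpolant $h^{\varepsilon_n}$ for all large $n$, uniformly in $t$ near $t_i$. This is a quantitative uniform-continuity estimate at $t_i$ (the slope of $h^{\varepsilon_n}$ blows up like $1/\varepsilon_n$, so one needs $|B_N(t)-B_N(t_i)|\le(r-h_i)$ for $|t-t_i|\le\varepsilon_n$, which holds for $n$ large by continuity of $B_N$ at $t_i$ but must be stated carefully, path by path); once this is handled, the rest is routine.
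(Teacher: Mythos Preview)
Your approach is the same as the paper's: approximate $h$ by $H^1$ tent functions $h^\varepsilon$ and pass to the limit on both sides of \eqref{NBBbelowh}. The one slip is the direction of monotonicity: with your linear tents, $h^\varepsilon$ \emph{increases} to $h$ pointwise as $\varepsilon\downarrow0$ (for $x=t_i+\delta$ with $0<\delta<\varepsilon$ one computes $h^\varepsilon(x)=h_i+(r-h_i)\delta/\varepsilon$, which grows as $\varepsilon$ shrinks). Consequently the events $E_\varepsilon=\{B_N<h^\varepsilon\}$ increase, the relevant limit is the \emph{union}, and your ``$\subseteq$''/``$\supseteq$'' labels are swapped: the easy inclusion is $E_\varepsilon\subseteq E_0$ (since $h^\varepsilon\le h$), while the path-continuity argument in your obstacle paragraph is exactly what establishes $E_0\subseteq\bigcup_n E_{\varepsilon_n}$. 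Once the labels are corrected the argument is fine---indeed your treatment of the nontrivial inclusion is more careful than the paper's, which simply asserts $E_\varepsilon\nearrow E_0$. On the determinant side your use of finite-dimensionality is cleaner than the paper's appeal to trace-class bounds. One small simplification: fix an interval $[t_1-\delta,t_k+\delta]\subset(0,1)$ at the outset and build the tents of width $\varepsilon<\delta$ inside it, so that $\tau_1,\tau_2$ stay fixed and you avoid tracking their $\varepsilon$-dependence.
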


The multipoint distribution of $N$ non-intersecting Brownian bridges conditioned to stay below a constant level can be expressed as follows.

\begin{proposition}\label{prop:pathintkernel}
Let $h$ be a function given by \eqref{defhmp}.
Then the following conditional probability for the top path of $N$ non-intersecting Brownian bridges can be written in a Fredholm determinant form as
\begin{multline}\label{mpdistr}
\P\left(B_N(t)<h(t)\mbox{ for }t\in[0,1]\bigm|B_N(t)<r\mbox{ for }t\in[0,1]\right)\\
=\det\left(\id-K_{\tau_1}+\ol P_{\eta_1}T_{\tau_1,\tau_2}\ol P_{\eta_2}\dots T_{\tau_{k-1},\tau_k}\ol P_{\eta_k}(T_{\tau_1,\tau_k})^{-1}K_{\tau_1}\right)_{L^2(\R_-)}.
\end{multline}
\end{proposition}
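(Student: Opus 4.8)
The plan is to deduce Proposition~\ref{prop:pathintkernel} directly from Theorem~\ref{thm:DistrH} (together with Lemma~\ref{lemma:NRextend}, which guarantees that the theorem applies to the stepwise function \eqref{defhmp}) by computing the operator $T^h_{\tau_1,\tau_2}$ explicitly for the specific choice \eqref{defhmp}. For that $h$, the rescaled function $\wt h$ from \eqref{deftauhtilde} equals $0$ except at the finitely many points $\tau_i=\frac14\frac{t_i}{1-t_i}$, where it takes the value $\wt h(\tau_i)=\eta_i=\frac{1+4\tau_i}{\sqrt2}(h_i-r)$. The Brownian motion $\wt b$ in \eqref{defT1} therefore has to stay below $0$ on all of $[\alpha_1,\alpha_2]$ — so by the reflection principle the free propagator $\phi_{2(\cdot)}$ is replaced by $T_{\tau_i,\tau_{i+1}}$ as in \eqref{defT2} — and in addition, at each intermediate time $\tau_i$, has to be below the level $\eta_i$, which is precisely the insertion of the projection $\ol P_{\eta_i}$ at the corresponding time slice.

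Concretely, the key step is the identity
\begin{equation}
T^h_{\tau_1,\tau_2}=\ol P_{\eta_1}T_{\tau_1,\tau_2}\ol P_{\eta_2}T_{\tau_2,\tau_3}\ol P_{\eta_3}\cdots T_{\tau_{k-1},\tau_k}\ol P_{\eta_k}
\end{equation}
when $h$ is given by \eqref{defhmp} with time points $t_1<\dots<t_k$ (so that $\tau_1<\dots<\tau_k$ play the role of the endpoints $\alpha_1,\alpha_2$ of the interval in \eqref{defT1}, with the remaining $\tau_i$ strictly interior). This is just the Markov property of Brownian bridge: conditioning the path to lie below $0$ at all times and below $\eta_i<0$ at the finitely many exceptional times $\tau_i$ factorizes, via the Chapman--Kolmogorov equation, into a product over the successive subintervals $[\tau_i,\tau_{i+1}]$, each propagator being the reflected kernel $T_{\tau_i,\tau_{i+1}}$ (reflection principle for staying below $0$), sandwiched between the indicator operators $\ol P_{\eta_i}$ enforcing the pointwise constraint. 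Taking the $v$-derivative in \eqref{defT1} removes the final indicator $\ol P_{\eta_k}$'s ``$\le v$'' into the kernel structure as usual, which is consistent with $\ol P_{\eta_k}$ appearing explicitly above. Substituting this expression for $T^h_{\tau_1,\tau_2}$ into \eqref{NBBbelowhCond} of Theorem~\ref{thm:DistrH} gives
\begin{equation}
\det\left(\id-K_{\tau_1}+\ol P_{\eta_1}T_{\tau_1,\tau_2}\ol P_{\eta_2}\cdots T_{\tau_{k-1},\tau_k}\ol P_{\eta_k}\,K_{\tau_k,\tau_1}\right)_{L^2(\R_-)}.
\end{equation}

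It then remains to rewrite $K_{\tau_k,\tau_1}$ as $(T_{\tau_1,\tau_k})^{-1}K_{\tau_1}$. This follows from the semigroup/compatibility relations for the $\Psi$ and $\Phi$ families in Proposition~\ref{prop:compatibility}: iterating \eqref{PhiT} gives $\int \Phi^n_{\tau_1}(u)T_{\tau_1,\tau_k}(u,v)\,\d u=\Phi^n_{\tau_k}(v)$, i.e. $T_{\tau_1,\tau_k}$ intertwines $\Phi_{\tau_1}$ and $\Phi_{\tau_k}$; hence, reading off the definition \eqref{defKtau}, $T_{\tau_1,\tau_k}K_{\tau_k,\tau_1}=K_{\tau_1,\tau_1}=K_{\tau_1}$, so that $K_{\tau_k,\tau_1}=(T_{\tau_1,\tau_k})^{-1}K_{\tau_1}$, at least formally; one should note that $T_{\tau_1,\tau_k}$ acting on $L^2(\R_-)$ with the reflection at $0$ is invertible on the relevant subspace (equivalently, one may simply avoid inverting by writing everything as $T_{\tau_1,\tau_k}K_{\tau_k,\tau_1}$ inside the determinant via the cyclic property). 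Plugging this in yields exactly \eqref{mpdistr}.

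The main obstacle I anticipate is the careful justification of the Markov/reflection factorization of $T^h_{\tau_1,\tau_2}$ at the level of kernels — in particular handling the $v$-derivative in \eqref{defT1}, making sure the boundary indicators $\ol P_{\eta_i}$ at the two endpoints $\tau_1,\tau_k$ appear with the right convention, and checking integrability so that the composition of operators $\ol P_{\eta_1}T_{\tau_1,\tau_2}\ol P_{\eta_2}\cdots$ is well defined and trace class when inserted into the Fredholm determinant (here one reuses the decay estimates from Lemma~\ref{lemma:traceclass} and the fact that each $\eta_i\le 0$ so the indicators only help). The algebraic rewriting via Proposition~\ref{prop:compatibility} is then routine.
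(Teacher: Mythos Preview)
Your proposal is correct and follows essentially the same route as the paper: identify $T^h_{\tau_1,\tau_k}=\ol P_{\eta_1}T_{\tau_1,\tau_2}\ol P_{\eta_2}\cdots T_{\tau_{k-1},\tau_k}\ol P_{\eta_k}$ via the Markov/reflection factorization and then feed this into the framework of Theorem~\ref{thm:DistrH} (legitimized for the stepwise $h$ by Lemma~\ref{lemma:NRextend}). One small slip: the identity $T_{\tau_1,\tau_k}K_{\tau_k,\tau_1}=K_{\tau_1}$ comes from \eqref{TPsi} (the operator $T_{\tau_1,\tau_k}$ hits the $\Psi_{\tau_k}$ factor in \eqref{defKtau}), not from \eqref{PhiT} as you wrote.
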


\begin{proof}
By Lemma~\ref{lemma:NRextend}, Theorem~\ref{thm:NBBbelowh} holds for this choice of $h$ as well.
The same steps used in the proof of Theorem~\ref{thm:DistrH} lead to the result.
We just need to replace $T^h_{\tau_1,\tau_2}$ with the corresponding expression for a general $h$ of the form \eqref{defhmp}, namely with
\begin{equation}
T_{\tau_1,\tau_k}^h=\ol P_{\eta_1}T_{\tau_1,\tau_2}\ol P_{\eta_2}T_{\tau_2,\tau_3}\dots T_{\tau_{k-1},\tau_k}\ol P_{\eta_k}
\end{equation}
from \eqref{defT1} and using \eqref{deftauhtilde} with \eqref{defQeta}.
\end{proof}

Using the result of~\cite{BCR13}, the Fredholm determinant with the path integral kernel on the right-hand side of \eqref{mpdistr} can be rewritten as in Proposition~\ref{prop:useBCR15} below.
Hence the second part of Theorem~\ref{thm:corrkernel} about the gap probabilities follows from Proposition~\ref{prop:pathintkernel} and \ref{prop:useBCR15}.
This is weaker than proving that $K_\ext$ is the correlation kernel for $N$ non-intersecting Brownian bridges conditioned to stay below level $r$.
We prove in Section~\ref{s:directcorr} that $K_\ext$ is actually the correlation kernel.

\begin{proposition}\label{prop:useBCR15}
For the Fredholm determinant on the right-hand side of \eqref{mpdistr}, the following identity hold
\begin{multline}\label{pathint=ext}
\det\left(\id-K_{\tau_1}+\ol P_{\eta_1}T_{\tau_1,\tau_2}\ol P_{\eta_2}\dots T_{\tau_{k-1},\tau_k}\ol P_{\eta_k}(T_{\tau_1,\tau_k})^{-1}K_{\tau_1}\right)_{L^2(\R_-)}\\
=\det(\id-QK^\ext)_{L^2(\{\tau_1,\dots,\tau_k\}\times\R_-)}
\end{multline}
where the extended kernel $K^\ext$ is given by \eqref{defKexttauu} and $Q$ is defined in \eqref{defQeta}.
\end{proposition}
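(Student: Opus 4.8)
The plan is to recognize the left-hand side as a Fredholm determinant of a ``path integral kernel'' in the sense of~\cite{BCR13,PS02} and to invoke the general identity of~\cite{BCR13} converting such determinants into Fredholm determinants of an extended kernel. Concretely, I would match the data of the left-hand side to the framework of~\cite{BCR13}: the role of the free evolution is played by the family $T_{\tau_i,\tau_{i+1}}$ (which satisfies the semigroup property $T_{\tau_i,\tau_j}T_{\tau_j,\tau_k}=T_{\tau_i,\tau_k}$, as can be checked from the Gaussian form~\eqref{defT2} by the reflection principle), the projections inserted between consecutive times are the $\ol P_{\eta_\ell}$, and the ``boundary'' operator is $K_{\tau_1}$, which by~\eqref{defKtau} is a finite-rank operator built from the functions $\Psi^n_{\tau_1}$ and $\Phi^m_{\tau_1}$ against $(\id-K_0)^{-1}$. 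The compatibility relations~\eqref{PhiT}--\eqref{PhiPsi} of Proposition~\ref{prop:compatibility} are exactly the hypotheses needed: \eqref{PhiT} and~\eqref{TPsi} say that $\Phi^n_\tau$ and $\Psi^m_\tau$ are the ``forward'' and ``backward'' propagated families under $T$, and~\eqref{PhiPsi} pins down the inner product so that $K_{\tau_1}$ is a genuine projection-type kernel compatible with the propagation. This is the precise analogue of the situation treated in~\cite[Section~4]{BCR13}.

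Once the dictionary is set up, the argument proceeds in three steps. First I would use the semigroup property to write $(T_{\tau_1,\tau_k})^{-1}K_{\tau_1} = \Psi_{\tau_k}(\id-K_0)^{-1}\Phi_{\tau_1}$ after propagating one family through; here one must be a little careful because $(T_{\tau_1,\tau_k})^{-1}$ is unbounded, but it acts on the finite-rank object $K_{\tau_1}$ only through the families $\Psi^n$, and relation~\eqref{TPsi} (read backwards) tells us $T_{\tau_1,\tau_k}\Psi_{\tau_k}=\Psi_{\tau_1}$, so $(T_{\tau_1,\tau_k})^{-1}$ just replaces the index $\tau_1$ by $\tau_k$ in $K_{\tau_1}$ — rigorously, this is the statement that the left-hand side does not actually involve the inverse in a problematic way, which is the content of the relevant proposition in~\cite{BCR13}. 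Second, I would apply the main identity of~\cite{BCR13} to rewrite the resulting path-integral Fredholm determinant on $L^2(\R_-)$ as a Fredholm determinant on $L^2(\{\tau_1,\dots,\tau_k\}\times\R_-)$ of the extended kernel whose off-diagonal ``free'' part is $-\id_{\tau_i<\tau_j}T_{\tau_i,\tau_j}$ and whose remaining part is $\sum_{n,m}\Psi^n_{\tau_i}(\id-K_0)^{-1}(n,m)\Phi^m_{\tau_j}$; comparing with~\eqref{defKexttauu} this is exactly $K^\ext$. Third, the projections $\ol P_{\eta_\ell}$ on the path-integral side become, in the extended picture, the insertion of $Q$ (defined in~\eqref{defQeta} via the shifted thresholds $\eta_i$) against $K^\ext$ on $L^2(\{\tau_1,\dots,\tau_k\}\times\R_-)$, which yields the right-hand side $\det(\id-QK^\ext)$.

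The main obstacle, and the step deserving the most care, is the rigorous handling of $(T_{\tau_1,\tau_k})^{-1}$ and the trace-class / convergence bookkeeping needed to legitimately apply the~\cite{BCR13} identity: one needs the kernels $\Phi^n_\tau$ to have Gaussian decay and $\Psi^m_\tau$ at most exponential growth (as already established in the proof of Lemma~\ref{lemma:traceclass} and via~\eqref{defPhi}--\eqref{defPsi}), so that all the operator products appearing in the manipulation are trace class and the cyclic/multiplicative properties of the Fredholm determinant apply. The finiteness of $N$ makes $K_{\tau_1}$ and $K_0$ finite rank, which controls the only potentially dangerous inverse $(\id-K_0)^{-1}$ on $\ell^2(\{0,\dots,N-1\})$ and keeps everything in the trace-class setting. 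I would present the core of the argument as a direct citation of~\cite{BCR13} with the verification that its hypotheses hold being precisely Proposition~\ref{prop:compatibility} together with the decay estimates, so that the proof reduces to assembling these ingredients and reading off the correspondence $\ol P_{\eta_\ell}\leftrightarrow Q$.
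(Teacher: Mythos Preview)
Your overall strategy is exactly the paper's: identify the left-hand side as a path-integral kernel and invoke Theorem~3.3 of~\cite{BCR13}, using Proposition~\ref{prop:compatibility} to verify the compatibility hypotheses. However, there is a genuine gap in your treatment of the analytic hypotheses, and it is precisely the point the paper singles out as non-trivial.

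You write that ``$\Phi^n_\tau$ has Gaussian decay and $\Psi^m_\tau$ at most exponential growth \dots\ so that all the operator products appearing in the manipulation are trace class.'' This inference is wrong. The kernel $K_\tau(u,v)=\sum_{n,m}\Psi^n_\tau(u)(\id-K_0)^{-1}(n,m)\Phi^m_\tau(v)$ grows exponentially in $u$ (through $\Psi$) while decaying in $v$; hence $K_\tau$ is \emph{not} a bounded operator on $L^2(\R_-)$, let alone trace class, regardless of its finite rank. Finite rank of a kernel does not help if the constituent functions are not in $L^2$. Consequently the assumptions of Theorem~3.3 of~\cite{BCR13} fail as stated, and your appeal to that theorem is not yet justified. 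The issue is not the unboundedness of $(T_{\tau_1,\tau_k})^{-1}$ per se, but the unboundedness of $K_\tau$ itself.

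The paper's remedy is a Gaussian conjugation: one replaces $\Phi^n_\tau(u)$ by $e^{u^2/(C\tau)}\Phi^n_\tau(u)$, $\Psi^m_\tau(v)$ by $e^{-v^2/(C\tau)}\Psi^m_\tau(v)$, and $T_{\tau_i,\tau_j}(u,v)$ by $e^{-u^2/(C\tau_i)+v^2/(C\tau_j)}T_{\tau_i,\tau_j}(u,v)$, with $C$ chosen large enough (specifically $C>4(\tau_{i+1}-\tau_i)/\tau_{i+1}$ for each $i$, and $C>4$) that all conjugated operators become bounded. The exponential growth of $\Psi$ is killed by the Gaussian weight, the Gaussian decay of $\Phi$ survives (since $e^{-u^2/4\tau}\cdot e^{u^2/(C\tau)}$ still decays for $C>4$), and the conjugated $T$'s acquire Gaussian decay in both variables. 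Because this is a pure conjugation, neither Fredholm determinant changes. With the conjugated objects one can then legitimately check Assumptions~1--3 of~\cite{BCR13} and apply the theorem; the conjugation is removed at the end. Your proof sketch would be complete once you insert this step and redo the boundedness/trace-class verification for the conjugated operators rather than the original ones.
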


\begin{proof}
Applying formally Theorem~3.3 of~\cite{BCR13} with $\mathcal W_{\tau_i,\tau_j}=T_{\tau_i,\tau_j}$ and with $K_{\tau_i}$ defined by \eqref{defKtau} would give \eqref{pathint=ext}.
This is however not correct because the operator $K_\tau$ with kernel given in \eqref{defKtau} is not a bounded operator and the assumptions of Theorem~3.3 of~\cite{BCR13} are not satisfied.

Hence we introduce the following conjugation in order to circumvent this issue.
Let
\begin{equation}\label{conjop}\begin{aligned}
\ol\Phi_\tau^n(u)&=e^{\frac{u^2}{C\tau}}\Phi_\tau^n(u),\\
\ol T_{\tau_i,\tau_j}(u,v)&=e^{-\frac{u^2}{C\tau_i}+\frac{v^2}{C\tau_j}}T_{\tau_i,\tau_j}(u,v),\\
\ol\Psi_\tau^m(v)&=e^{-\frac{v^2}{C\tau}}\Psi_\tau^m(v)
\end{aligned}\end{equation}
where $C$ is a sufficiently large constant which depends on $\tau_1,\dots,\tau_k$ in such a way the operators $\ol T_{\tau_i,\tau_{i+1}}$ are bounded.
The condition of boundedness of $\ol T_{\tau_i,\tau_{i+1}}$ is \mbox{$C>4(\tau_{i+1}-\tau_i)/\tau_{i+1}$},
because then the $v^2$ term in the exponent has negative sign in $\ol T_{\tau_i,\tau_{i+1}}(u,v)$ in \eqref{conjop}.
Further in this case,
\begin{equation}
\int_\R\d v\,\ol T_{\tau_i,\tau_{i+1}}(u,v)=\sqrt{\frac{C\tau_{i+1}}{C\tau_{i+1}-4(\tau_{i+1}-\tau_i)}}e^{-\frac{4(\tau_{i+1}-\tau_i)u^2}{C\tau_{i+1}(C\tau_{i+1}-4(\tau_{i+1}-\tau_i))}}
\end{equation}
which has Gaussian decay in $u$.

Replacing $\Phi_\tau^n$ and $\Psi_\tau^m$ by $\ol\Phi_\tau^n$ and $\ol\Psi_\tau^m$ in the definition \eqref{defKtau} of $K_\tau$, we get the kernel
\begin{equation}
\ol K_\tau(u,v)=e^{-\frac{u^2}{C\tau}+\frac{v^2}{C\tau}}K_\tau(u,v).
\end{equation}
Note that the Fredholm determinant on the left-hand side of \eqref{pathint=ext} does not change if the operators $K$ and $T$ are replaced by $\ol K$ and $\ol T$ since it is just a conjugation, i.e.
\begin{multline}
\det\left(\id-K_{\tau_1}+\ol P_{\eta_1}T_{\tau_1,\tau_2}\ol P_{\eta_2}\dots T_{\tau_{k-1},\tau_k}\ol P_{\eta_k}(T_{\tau_1,\tau_k})^{-1}K_{\tau_1}\right)_{L^2(\R_-)}\\
=\det\left(\id-\ol K_{\tau_1}+\ol P_{\eta_1}\ol T_{\tau_1,\tau_2}\ol P_{\eta_2}\dots\ol T_{\tau_{k-1},\tau_k}\ol P_{\eta_k}(\ol T_{\tau_1,\tau_k})^{-1}\ol K_{\tau_1}\right)_{L^2(\R_-)}.
\end{multline}

To apply Theorem~3.3 of~\cite{BCR13} (with the minor modification that now the space is $L^2(\R_-)$) with $\mathcal W_{\tau_i,\tau_j}=\ol T_{\tau_i,\tau_j}$ and with $\ol K_{\tau_i}$,
we check the three assumptions of the theorem.
For Assumption 1, all the operators which appear are bounded.
In particular, the boundedness of $\ol T_{\tau_i,\tau_j}$ was checked above.
The operator $\ol K_\tau$ is also bounded if $C>4$ by comparing the Gaussian decay $\Phi_\tau^n(u)\sim e^{-\frac{u^2}{4\tau}}$ with the conjugation \eqref{conjop}.

Assumption 2 about compatibility is rather clear using the interpretation of $T_{\tau_i,\tau_j}$ as a Brownian bridge transition kernel and by Proposition~\ref{prop:compatibility}.
Since the kernels of all the conjugated operators $\mathcal W_{\tau_i,\tau_j}$ and $\ol K_{\tau_i}$ which appear have Gaussian decay,
the trace class properties needed for Assumption 3 are straightforward to check.
Hence Theorem~3.3 of~\cite{BCR13} can be used which gives \eqref{pathint=ext} with $K_\ext$ replaced by its conjugated version on the right-hand side,
but the conjugation can be removed without changing the Fredholm determinant.
\end{proof}

\section{Extension of the Nguyen--Remenik formula}\label{s:extension}

In this section, we extend Proposition~\ref{prop:NguyenRemenik},
the Nguyen--Remenik formula for the probability that $N$ non-intersecting Brownian bridges stay below a given function on $[a,b]$ for any fixed $0<a<b<1$
to the probability that the Brownian bridges stay below a function $h$ of the form \eqref{defhr} on $[0,1]$.

\begin{proof}[Proof of Theorem~\ref{thm:NBBbelowh}]
First we express the Brownian bridge probability on the right-hand side of \eqref{defTheta} for the special choice of the function $h$ given in \eqref{defhr} in terms of $T_{\tau_1,\tau_2}^h$.
By introducing the drifted and shifted Brownian bridge $\wt b(\tau)=\wh b(\tau)-(1+4\tau)r/\sqrt2$, one can write
\begin{multline}\label{multiptprob}
\P_{\wh b(\alpha)=e^Ax,\wh b(\beta)=e^By}\left(\wh b(\tau)\le\frac{1+4\tau}{\sqrt2}h\left(\frac{4\tau}{1+4\tau}\right)\mbox{ for }\tau\in[\alpha,\beta]\right)\\
=\P_{\substack{\wt b(\alpha)=e^Ax-(1+4\alpha)r/\sqrt2\\\wt b(\beta)=e^By-(1+4\beta)r/\sqrt2}}
\left(\wt b(\tau)\le0\mbox{ for }\tau\in[\alpha,\beta],\wt b(\tau)\le\wt h(\tau)\mbox{ for }\tau\in(\tau_1,\tau_2)\right)
\end{multline}
where $\tau_1,\tau_2$ and $\wt h$ are defined by \eqref{deftauhtilde}.
Using the notation \eqref{defT2}, we can condition on the values of the Brownian bridge $\wt b(\tau)$ at times $\tau_1$ and $\tau_2$ and rewrite the right-hand side of \eqref{multiptprob} as
\begin{multline}\label{condbelow0}
\P_{\substack{\wt b(\alpha)=e^Ax-(1+4\alpha)r/\sqrt2\\\wt b(\beta)=e^By-(1+4\beta)r/\sqrt2}}
\left(\wt b(\tau)\le0\mbox{ for }\tau\in[\alpha,\beta],\wt b(\tau)\le\wt h(\tau)\mbox{ for }\tau\in(\tau_1,\tau_2)\right)\\
=\id_{\substack{x\le\sqrt2 r\cosh A\\y\le\sqrt2 r\cosh B}}
\frac{\int_{-\infty}^0\d u\int_{-\infty}^0\d v\,T_{\alpha,\tau_1}(e^Ax-\frac{(1+4\alpha)r}{\sqrt2},u)
T_{\tau_1,\tau_2}^h(u,v)T_{\tau_2,\beta}(v,e^By-\frac{(1+4\beta)r}{\sqrt2})}
{\phi_{2(\beta-\alpha)}(e^By-e^Ax-2\sqrt2(\beta-\alpha)r)}
\end{multline}
where the indicator on the right-hand side of \eqref{condbelow0} comes from the condition
that the starting point and the endpoint of the Brownian bridge $\wt b(\tau)$ should be below $0$ to get a non-zero probability.

Next we compare the operator $\Theta_{A,B}$ to the case of $N$ non-intersecting Brownian bridges not conditioned to stay below any function, that is, the free case.
We express $\Theta_{A,B}$ as the operator for the free case minus a remainder.
From the representation of $\Theta_{A,B}$ as the solution operator of a boundary value problem given in~\cite{NR15}, one obtains that
\begin{equation}\label{freeev}
e^{-(B-A)D}(x,y)=e^{(y^2-x^2)/2+B}\frac{\exp\left(-\frac{(e^By-e^Ax)^2}{4(\beta-\alpha)}\right)}{\sqrt{4\pi(\beta-\alpha)}}
\end{equation}
which corresponds to $\Theta_{A,B}$ with the choice $h=\infty$.
By defining
\begin{multline}\label{defR}
R_{A,B}(x,y)=e^{(y^2-x^2)/2+B}\frac{\exp\left(-\frac{(e^By-e^Ax)^2}{4(\beta-\alpha)}\right)}{\sqrt{4\pi(\beta-\alpha)}}\\
\times\left(1-\frac{\int_{-\infty}^0\d u\int_{-\infty}^0\d v\,T_{\alpha,\tau_1}(e^Ax-\frac{(1+4\alpha)r}{\sqrt2},u)
T_{\tau_1,\tau_2}^h(u,v)T_{\tau_2,\beta}(v,e^By-\frac{(1+4\beta)r}{\sqrt2})}
{\phi_{2(\beta-\alpha)}(e^By-e^Ax-2\sqrt2(\beta-\alpha)r)}\right),
\end{multline}
we can write the operator identity
\begin{equation}\label{Thetaproj}
\Theta_{A,B}=\ol P_{\sqrt2 r\cosh A}(e^{-(B-A)D}-R_{A,B})\ol P_{\sqrt2 r\cosh B}
\end{equation}
using the notation \eqref{defP}.

For the proof of Theorem~\ref{thm:NBBbelowh}, we need to take the limit $a\to0$ and $b\to1$ in \eqref{condbelowh}. Thus we set
\begin{equation}\label{ABL}
A=-L,\quad B=L\quad\mbox{which means}\quad a=\frac1{1+e^{2L}},\quad b=\frac{e^{2L}}{1+e^{2L}}.
\end{equation}
We decompose the operator $\Theta_{-L,L}$ as a sum of the operator which corresponds to the free case, the remainder operator and an error term as
\begin{equation}\label{Thetadecomp}
\Theta_{-L,L}=e^{-2LD}-R_{-L,L}-\Omega_L
\end{equation}
where the error term is
\begin{equation}
\Omega_L=e^{-2LD}-R_{-L,L}-\ol P_{\sqrt2 r\cosh L}(e^{-2LD}-R_{-L,L})\ol P_{\sqrt2 r\cosh L}.
\end{equation}
Since $K_{\Herm,N}$ defined by (\ref{defKHerm}) is a projector on a subspace of eigenvectors of $D$, it commutes with $e^{LD}$ and thus one has $e^{2LD}K_{\Herm,N}=(e^{LD}K_{\Herm,N})^2$.
Using the identity $\det(\id+AB)=\det(\id+BA)$, Proposition~\ref{prop:NguyenRemenik} can be written as
\begin{multline}\label{prob01}
\P(B_N(t)<h(t)\mbox{ for }t\in[0,1])\\
=\lim_{L\to\infty}\det(\id-K_{\Herm,N}+e^{LD}K_{\Herm,N}\Theta_{-L,L}e^{LD}K_{\Herm,N})_{L^2(\R)}.
\end{multline}
Next we use the decomposition \eqref{Thetadecomp} of $\Theta_{-L,L}$.
We prove the following lemma in Section~\ref{s:lemmas}.
\begin{lemma}\label{lemma:error}
The error term $\wt\Omega_L=e^{LD}K_{\Herm,N}\Omega_Le^{LD}K_{\Herm,N}$ goes to $0$ in trace norm as $L\to\infty$.
\end{lemma}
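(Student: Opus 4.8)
\textbf{Proof proposal for Lemma~\ref{lemma:error}.}
The plan is to estimate the trace norm of $\wt\Omega_L=e^{LD}K_{\Herm,N}\Omega_Le^{LD}K_{\Herm,N}$ directly, exploiting that $K_{\Herm,N}$ has finite rank $N$, so that it suffices to control a finite sum of matrix elements. Writing $\Omega_L=e^{-2LD}-R_{-L,L}-\ol P_{c_L}(e^{-2LD}-R_{-L,L})\ol P_{c_L}$ with $c_L=\sqrt2\,r\cosh L$, and noting that $\id-\ol P_{c_L}\ol P_{c_L}=P_{c_L}\otimes\id+\ol P_{c_L}\otimes P_{c_L}$ in the obvious sense, one has
\begin{equation}
\Omega_L = P_{c_L}(e^{-2LD}-R_{-L,L}) + \ol P_{c_L}(e^{-2LD}-R_{-L,L})P_{c_L}.
\end{equation}
Hence $\|\wt\Omega_L\|_1\le \|e^{LD}K_{\Herm,N}\|_2\,\|(e^{-2LD}-R_{-L,L})(P_{c_L}\otimes\id+\id\otimes P_{c_L})\,e^{LD}K_{\Herm,N}\|_2$ up to a constant, and since $N$ is fixed and $e^{LD}K_{\Herm,N}$ maps into the $N$-dimensional span of $\{e^{-Ln}\varphi_n\}_{n<N}$, the first factor is bounded (indeed decays like $e^{-L\cdot 0}=1$ from the $n=0$ mode, but is uniformly bounded). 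So everything reduces to showing that each of the finitely many scalar quantities
\begin{equation}
\int_\R\d x\int_\R\d y\,\varphi_n(x)\bigl(e^{-2LD}-R_{-L,L}\bigr)(x,y)\,\id_{\{x>c_L\ \mathrm{or}\ y>c_L\}}\,\varphi_m(y)\,e^{-L(n+m)}
\end{equation}
tends to $0$ as $L\to\infty$, and similarly with the roles symmetric.

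The key mechanism is that $c_L=\sqrt2\,r\cosh L\to\infty$ exponentially fast, whereas $\varphi_n$ decays like $e^{-x^2/2}$; so the region $\{x>c_L\}$ carries only a doubly-exponentially small tail of $\varphi_n$, which crushes any polynomial or single-exponential growth coming from the kernel. Concretely I would proceed in three steps. First, I would record the explicit form of $(e^{-2LD}-R_{-L,L})(x,y)$ from \eqref{freeev} and \eqref{defR}: the free part is a Gaussian in $e^Ly-e^{-L}x$ times the Gaussian prefactor $e^{(y^2-x^2)/2+L}$, and the remainder $R_{-L,L}$ differs from it only by multiplication by a factor bounded between $0$ and $1$ (a probability), so $|(e^{-2LD}-R_{-L,L})(x,y)|$ is itself bounded by $2$ times the modulus of the free kernel. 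This is the only place the structure of $R$ enters, and it enters trivially: $R$ is pointwise dominated by $e^{-2LD}$. Second, I would bound the free kernel: $|e^{-2LD}(x,y)|\le C e^{L}e^{(y^2-x^2)/2}e^{-(e^Ly-e^{-L}x)^2/(4(\beta-\alpha))}$ where $\beta-\alpha=\frac14(e^{2L}-e^{-2L})=\frac12\sinh(2L)$. Third, I would carry out the Gaussian integral in $y$ (resp.\ $x$) over $\R$, picking up a factor that kills the $e^{y^2/2}$ and leaves a bounded Gaussian, and then the remaining integral over $x\in(c_L,\infty)$ of $\varphi_n(x)$ against a factor growing at most like $e^{O(L)}e^{O(x)}$ is bounded by $e^{O(L)}\int_{c_L}^\infty e^{-x^2/2+O(x)}\,\d x$, which is $\le e^{O(L)}e^{-c_L^2/4}=e^{O(L)}e^{-\frac12 r^2\cosh^2 L}\to0$. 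The $e^{-L(n+m)}$ prefactors only help. Summing over the finitely many $n,m<N$ and over the two symmetric terms gives $\|\wt\Omega_L\|_1\to0$.

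I expect the main obstacle to be purely bookkeeping: carrying out the Gaussian integration in one variable cleanly so that one genuinely gets a bounded (not $L$-dependent-and-growing-too-fast) Gaussian tail in the other variable, and checking that the leftover $L$-dependent prefactor really is only exponential in $L$ rather than, say, exponential in $e^{2L}$. The delicate point is the interplay between the $e^{(y^2-x^2)/2}$ prefactor, which wants to destroy integrability, and the Gaussian $e^{-(e^Ly-e^{-L}x)^2/(2\sinh 2L)}$, whose variance $\sinh(2L)$ blows up: one must verify that after completing the square the coefficient of $y^2$ comes out negative (with room to spare) so the $y$-integral converges and produces a manageable function of $x$. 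Once that computation is pinned down, the doubly-exponential smallness of $e^{-c_L^2/4}$ overwhelms everything, and since $K_{\Herm,N}$ is finite rank there are no uniformity-in-dimension issues to worry about. An alternative, slightly softer route — worth mentioning as a fallback — is to avoid the explicit kernel entirely: note that $e^{LD}K_{\Herm,N}$ and $(e^{-2LD}-R_{-L,L})e^{LD}K_{\Herm,N}$ are Hilbert–Schmidt with norms that can be bounded uniformly in $L$ via the probabilistic interpretation, while $P_{c_L}\to 0$ strongly on any fixed finite-dimensional subspace with quantitative rate governed by the Gaussian tail of $\varphi_n$; combining these gives the trace-norm convergence without ever completing a square.
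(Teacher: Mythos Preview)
Your approach is essentially correct, but there is a sign slip you should fix: the prefactor in the matrix elements of $\wt\Omega_L$ is $e^{+L(n+m)}$, not $e^{-L(n+m)}$, since $e^{LD}\varphi_n=e^{Ln}\varphi_n$. Correspondingly, your parenthetical claim that $\|e^{LD}K_{\Herm,N}\|_2$ is uniformly bounded is false --- it grows like $e^{(N-1)L}$. Fortunately this does not break the argument: the growth is only $e^{O(L)}$ with an $N$-dependent implicit constant, and your doubly-exponential decay $e^{-c_L^2/4}\sim e^{-\frac{r^2}{8}e^{2L}}$ still wins. So once you correct the sign and absorb $e^{2(N-1)L}$ into the $e^{O(L)}$ in your final estimate, the proof goes through. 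The computation you flag as delicate (the $e^{y^2/2}$ versus the Gaussian of variance $\sinh 2L$) indeed works out: the $e^{-y^2/2}$ from $\varphi_m(y)$ cancels the $e^{y^2/2}$, and the remaining $y$-Gaussian has coefficient $-e^{2L}/(2\sinh 2L)\to-1$, so the $y$-integral is harmless.

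Your route is genuinely different from the paper's and in some ways cleaner. The paper factors $e^{-2LD}-R_{-L,L}=\Gamma_1\Gamma_2\Gamma_3$ into three explicit integral operators (coming from the Markovian structure of the constrained Brownian bridge at times $\tau_1$ and $\tau_k$) and then bounds $\|e^{LD}K_{\Herm,N}P_{c_L}\Gamma_1\|_2\cdot\|\Gamma_2\|_{\op}\cdot\|\Gamma_3\ol P_{c_L}e^{LD}K_{\Herm,N}\|_2$, which requires computing three separate norms and inserting an auxiliary Gaussian conjugation $e^{\pm u^2/8\tau_i}$ to make each factor finite. Your observation that $0\le e^{-2LD}-R_{-L,L}\le e^{-2LD}$ pointwise (since $R_{-L,L}$ is the free kernel times a Brownian-bridge probability in $[0,1]$) lets you bypass the $\Gamma$-factorization entirely and reduce to a single explicit Gaussian kernel; combined with the finite rank of $K_{\Herm,N}$, this is more direct. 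The paper's decomposition, on the other hand, keeps the operator-theoretic structure more visible and would adapt more readily if one later needed estimates that are uniform in $N$.
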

Thus, by Lemma~\ref{lemma:error}, in the $L\to\infty$ limit, we can neglect the error term in the Fredholm determinant on the right-hand side of \eqref{prob01}
(use for example Lemma~4 in Chap.~XIII.17 of~\cite{RS78IV}). Consequently one obtains
\begin{equation}\label{BNbelowh01}
\P\left(B_N(t)<h(t)\mbox{ for }t\in[0,1]\right)=\lim_{L\to\infty}\det\left(\id-e^{LD}K_{\Herm,N}R_{-L,L}e^{LD}K_{\Herm,N}\right)_{L^2(\R)}.
\end{equation}

By the definition \eqref{defKHerm}, we can write $K_{\Herm,N}=\varphi\varphi^*$
where $\varphi:L^2(\{0,1,\dots,N-1\})\to L^2(\R)$ and $\varphi^*:L^2(\R)\to L^2(\{0,1,\dots,N-1\})$ are operators that are adjoints of each other defined by
\begin{equation}
\left(\varphi f\right)(x)=\sum_{n=0}^{N-1}\varphi_n(x)f(n),
\qquad\left(\varphi^*g\right)(x)=\int_\R\d x\,\varphi_n(x)g(x).
\end{equation}
By this identity and by using the cyclic property of the Fredholm determinant again, we have
\begin{equation}\label{phiRphi}\begin{aligned}
\det\left(\id-e^{LD}K_{\Herm,N}R_{-L,L}e^{LD}K_{\Herm,N}\right)_{L^2(\R)}
&=\det\left(\id-R_{-L,L}e^{2LD}K_{\Herm,N}\right)_{L^2(\R)}\\
&=\det\left(\id-\varphi^*R_{-L,L}e^{2LD}\varphi\right)_{L^2(\{0,1,\dots,N-1\})}.
\end{aligned}\end{equation}
The rest of the proof of Theorem~\ref{thm:NBBbelowh} now follows from the Proposition~\ref{prop:kerneleq} below about the equality of kernels,
since the prefactor in front of $K_N^h$ on the right-hand side of \eqref{kerneleq} is just a conjugation which can be removed without changing the value of the corresponding Fredholm determinant.
\end{proof}

\begin{proposition}\label{prop:kerneleq}
Let $h\in H^1([a,b])$ be a function which satisfies \eqref{defhr}.
Then for any $N$ and $L$, one has
\begin{equation}\label{kerneleq}
(\varphi^*R_{-L,L}e^{2LD}\varphi)_{n,m}=\sqrt{\frac{m!}{n!}\frac{2^n}{2^m}}\frac{e^{Lm}}{e^{Ln}}K_N^h(n,m)
\end{equation}
for all $n,m=0,1,\dots,N-1$.
\end{proposition}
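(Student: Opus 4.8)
The plan is to compute the matrix element $(\varphi^*R_{-L,L}e^{2LD}\varphi)_{n,m}$ directly and match it, term by term, with $K_N^h(n,m)$ as given in \eqref{defKN}. Since $D\varphi_m=m\varphi_m$ one has $e^{2LD}\varphi_m=e^{2Lm}\varphi_m$, so
\[
(\varphi^*R_{-L,L}e^{2LD}\varphi)_{n,m}=e^{2Lm}\int_\R\d x\int_\R\d y\,\varphi_n(x)\,R_{-L,L}(x,y)\,\varphi_m(y).
\]
I would split $R_{-L,L}$ along the bracket in \eqref{defR}: the ``$1$'' is, by \eqref{freeev}, exactly the kernel $e^{-2LD}(x,y)$, while the rest is a remainder built from $T_{\alpha,\tau_1}$, $T^h_{\tau_1,\tau_2}$, $T_{\tau_2,\beta}$ over the Gaussian $\phi_{2(\beta-\alpha)}(e^Ly-e^{-L}x-2\sqrt2(\beta-\alpha)r)$. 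For the first part, integrating $\int_\R\d y\,e^{-2LD}(x,y)\varphi_m(y)=e^{-2Lm}\varphi_m(x)$, cancelling $e^{2Lm}$, and invoking orthonormality of $\{\varphi_n\}$ (from \eqref{defphi} and the stated Hermite orthogonality) gives $\id(n,m)$; since the prefactor $c_{n,m,L}:=\sqrt{\tfrac{m!}{n!}\tfrac{2^n}{2^m}}\,\tfrac{e^{Lm}}{e^{Ln}}$ in \eqref{kerneleq} equals $1$ on the diagonal, this is precisely $c_{n,m,L}\,\id(n,m)$.

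For the remainder I would first cancel the Gaussian in $e^{-2LD}(x,y)$ against the Gaussian denominator of \eqref{defR}: completing the square collapses the ratio to the elementary factor $e^{(y^2-x^2)/2+L-\sqrt2 r(e^Ly-e^{-L}x)+2(\beta-\alpha)r^2}$. The integral over $(x,y)$ then factorizes, with $T^h_{\tau_1,\tau_2}(u,v)$ (supported on $u,v<0$) coupling an $x$-integral $\mathcal X_n(u)$ — involving $\varphi_n(x)$, $e^{-x^2/2}$ and $T_{\alpha,\tau_1}(e^{-L}x-\tfrac{(1+4\alpha)r}{\sqrt2},u)$ — to a $y$-integral $\mathcal Y_m(v)$ — involving $\varphi_m(y)$, $e^{y^2/2}$ and $T_{\tau_2,\beta}(v,e^Ly-\tfrac{(1+4\beta)r}{\sqrt2})$ — with the remaining exponentials, among them $e^{2(\beta-\alpha)r^2}=e^{-2\alpha r^2}e^{2\beta r^2}$, distributed between the two. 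The key claim is that $\mathcal X_n(u)=A_{n,L}\,\Phi^n_{\tau_1}(u)$ and $\mathcal Y_m(v)=B_{m,L}\,\Psi^m_{\tau_2}(v)$ for explicit constants; note $\mathcal X_n,\mathcal Y_m$ carry $L$ through both the scaling and $\alpha,\beta$, so the content of the claim is that this $L$-dependence factors out entirely. Granting it, the remainder becomes $-e^{2Lm}A_{n,L}B_{m,L}\int_{\R_-}\!\d u\int_{\R_-}\!\d v\,\Phi^n_{\tau_1}(u)T^h_{\tau_1,\tau_2}(u,v)\Psi^m_{\tau_2}(v)$, i.e. $c_{n,m,L}$ times the second term of \eqref{defKN} once $e^{2Lm}A_{n,L}B_{m,L}=c_{n,m,L}$ is checked.

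To prove the claim for $\mathcal X_n$, I would insert the Fourier--Rodrigues representation $e^{-x^2}H_n(x)\propto\int_{\I\R}\d W\,W^ne^{W^2+2xW}$, change variables $x=e^L\big(s+\tfrac{(1+4\alpha)r}{\sqrt2}\big)$ so the argument of $T_{\alpha,\tau_1}$ becomes $s$, perform the elementary Gaussian $s$-integral against $T_{\alpha,\tau_1}(s,u)=\phi_{2(\tau_1-\alpha)}(u-s)-\phi_{2(\tau_1-\alpha)}(u+s)$ — whose reflection structure produces exactly the odd combination $f_W(u)-f_W(-u)$ of \eqref{deffg} — and finally rescale the contour by $W\mapsto -e^{-L}W$. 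Using $\alpha=\tfrac14e^{-2L}$ one checks that the quadratic exponent in the new contour variable is precisely the one in \eqref{defPhi} up to an overall constant, and that every scattered prefactor collapses, so that $\mathcal X_n(u)=\pi^{1/4}2^{n/2}(n!)^{-1/2}e^{L/2-Ln}e^{r^2}\Phi^n_{\tau_1}(u)$. The computation for $\mathcal Y_m$ is parallel but built on the generating-function representation $H_m(\zeta)=\tfrac{m!}{2\pi\I}\oint_{\Gamma_0}Z^{-(m+1)}e^{2\zeta Z-Z^2}\d Z$ (which supplies the $Z^{-(m+1)}$ of \eqref{defPsi}), the substitution $e^Ly-\tfrac{(1+4\beta)r}{\sqrt2}=z$, the Gaussian $z$-integral against $T_{\tau_2,\beta}$, the rescaling $Z\mapsto e^LZ$, and $\beta=\tfrac14e^{2L}$, yielding $\mathcal Y_m(v)=\pi^{-1/4}2^{-m/2}\sqrt{m!}\,e^{-L/2-Lm}e^{-r^2}\Psi^m_{\tau_2}(v)$. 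Multiplying, $e^{2Lm}A_{n,L}B_{m,L}=2^{(n-m)/2}\sqrt{m!/n!}\,e^{Lm-Ln}=c_{n,m,L}$, which closes the argument.

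I expect the only real difficulty to be bookkeeping: keeping track of the numerous $\alpha$-, $\beta$- and $e^{\pm L}$-dependent Gaussian prefactors, and of the orientations of the contours $\I\R$ and $\Gamma_0$ under the rescalings, and confirming that every such factor cancels — since $K^h_N$ in \eqref{defKN} has no $L$-dependence whatsoever, any surviving $e^{\pm2L}$-term would reveal a slip. The analytic points — Fubini for interchanging the $x,y,u,v$-integrals with the contour integrals, and convergence of the Gaussian and contour integrals — are routine here, as $N$ is fixed and every integrand decays in a Gaussian fashion in the spatial variables and suitably along $\I\R$ and $\Gamma_0$.
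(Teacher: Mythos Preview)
Your proposal is correct and follows essentially the same route as the paper: both use the contour representations \eqref{phirepr1}--\eqref{phirepr2} for $\varphi_n,\varphi_m$, perform the Gaussian $x$- and $y$-integrals against the pieces $T_{\alpha,\tau_1}$ and $T_{\tau_2,\beta}$, and rescale the contour variables by $e^{\pm L}$ to land on \eqref{defPhi}--\eqref{defPsi}. The only organizational difference is how the $\id(n,m)$ term is extracted: you split off the ``$1$'' in \eqref{defR} at the outset and obtain $\id(n,m)$ from $e^{-2LD}\varphi_m=e^{-2Lm}\varphi_m$ and orthonormality, whereas the paper rewrites that ``$1$'' as the convolution $\int\!\!\int\phi\,\phi\,\phi$ (their $Q_0$), treats it on the same footing as the $T^h$-terms, and recovers $\id(n,m)$ at the end via the contour identity \eqref{fugu} of Lemma~\ref{lemma:fgphi}. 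Your shortcut is slightly cleaner for the identity piece; the paper's uniform treatment has the virtue of making the cancellation of all $L$-dependent prefactors visible in a single display (their \eqref{phiRphiS}).
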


\begin{remark}
Notice that $K_N^h$ on the right-hand side of \eqref{kerneleq} does not depend on $L$, hence up to the conjugation neither the left-hand side does, which is a priori not at all obvious.
This fact shows that the $L\to\infty$ limit of the right-hand side of \eqref{condbelowh} with \eqref{ABL} is obtained
up to conjugation by simply removing the projections from $\Theta_{-L,L}$ in \eqref{Thetaproj}.
\end{remark}

\begin{proof}[Proof of Proposition~\ref{prop:kerneleq}]
We use the following two integral representations of the harmonic oscillator functions:
\begin{align}
\varphi_n(x)&=\sqrt{\frac{2^n}{n!}}\pi^{1/4}e^{x^2/2}\frac1{\pi\I}\int_{\I\R}\d w\,e^{w^2-2wx}w^n,\label{phirepr1}\\
\varphi_n(x)&=\sqrt{\frac{n!}{2^n}}\pi^{-1/4}e^{-x^2/2}\frac1{2\pi\I}\oint_{\Gamma_0}\d z\,\frac{e^{-z^2+2zx}}{z^{n+1}}\label{phirepr2}
\end{align}
where the integration contour $\Gamma_0$ is a small circle around $0$ with counterclockwise orientation.
To compute the kernel on the left-hand side of \eqref{kerneleq}, we substitute \eqref{defT2} in the double integral in the definition \eqref{defR} of $R_{-L,L}$.
In this way, we get the terms
\begin{equation}\label{defQi}\begin{aligned}
Q_0(u,v,X,Y)&=\phi_{2\tau_1-e^{-2L}/2}(u-X)\,\phi_{2(\tau_2-\tau_1)}(v-u)\,\phi_{e^{2L}/2-2\tau_2}(Y-v),\\
Q_1(u,v,X,Y)&=-\phi_{2\tau_1-e^{-2L}/2}(u-X)\,T_{\tau_1,\tau_2}^h(u,v)\,\phi_{e^{2L}/2-2\tau_2}(Y-v),\\
Q_2(u,v,X,Y)&=\phi_{2\tau_1-e^{-2L}/2}(u-X)\,T_{\tau_1,\tau_2}^h(u,v)\,\phi_{e^{2L}/2-2\tau_2}(Y+v),\\
Q_3(u,v,X,Y)&=\phi_{2\tau_1-e^{-2L}/2}(u+X)\,T_{\tau_1,\tau_2}^h(u,v)\,\phi_{e^{2L}/2-2\tau_2}(Y-v),\\
Q_4(u,v,X,Y)&=-\phi_{2\tau_1-e^{-2L}/2}(u+X)\,T_{\tau_1,\tau_2}^h(u,v)\,\phi_{e^{2L}/2-2\tau_2}(Y+v).
\end{aligned}\end{equation}
By simplifying the exponential prefactor with the denominator on the right-hand side of \eqref{defR}, one gets
\begin{equation}
\begin{aligned}
R_{-L,L}(x,y)&=e^{(y^2-x^2)/2+L-\sqrt2r(e^Ly-e^{-L}x)+\sinh(2L)r^2}\\
&\times\int_\R\d u\int_\R\d v\sum_{j=0}^4Q_j\left(u,v,e^{-L}x-\frac{(1+e^{-2L})r}{\sqrt2},e^Ly-\frac{(1+e^{2L})r}{\sqrt2}\right).
\end{aligned}
\end{equation}
Note that one has changed the domain of integration for $u$ and $v$ to $\R$ because of the term which corresponds to $Q_0$.
In the terms which correspond to $Q_1$--$Q_4$, $T_{\tau_1,\tau_2}^h(u,v)$ is $0$ if $u$ or $v$ is positive by \eqref{defT1}.
With these notations, the kernel on the left-hand side of \eqref{kerneleq} using both representations \eqref{phirepr1}--\eqref{phirepr2} of the harmonic oscillator functions $\varphi_n$ is equal to
\begin{equation}\label{6foldint}\begin{aligned}
&(\varphi^*R_{-L,L}e^{2LD}\varphi)_{n,m}\\
&\quad=\int_\R\d x\int_\R\d y\,\varphi_n(x)R_{-L,L}(x,y)e^{2Lm}\varphi_m(y)\\
&\quad=\sqrt{\frac{m!}{n!}\frac{2^n}{2^m}}\frac2{(2\pi\I)^2}\int_\R\d x\int_\R\d y\int_{\I\R}\d w\oint_{\Gamma_0}\d z\int_\R\d u\int_\R\d v\,e^{w^2-2wx}w^ne^{-\sqrt2r(e^Ly-e^{-L}x)}\\
&\qquad\times e^{\sinh(2L)r^2+L}\sum_{j=0}^4Q_j\left(u,v,e^{-L}x-\frac{(1+e^{-2L})r}{\sqrt2},e^Ly-\frac{(1+e^{2L})r}{\sqrt2}\right)e^{2Lm}\frac{e^{-z^2+2zy}}{z^{m+1}}.
\end{aligned}\end{equation}
Doing the change of variables
\begin{equation}
X=e^{-L}x-\frac{(1+e^{-2L})r}{\sqrt2},\quad Y=e^Ly-\frac{(1+e^{2L})r}{\sqrt2},\quad W=e^Lw,\quad Z=e^{-L}z,
\end{equation}
one obtains
\begin{equation}\label{6foldintB}\begin{aligned}
&(\ref{6foldint})=\sqrt{\frac{m!}{n!}\frac{2^n}{2^m}}\frac{e^{Lm}}{e^{Ln}}\frac2{(2\pi\I)^2}\int_\R\d X\int_\R\d Y\int_{\I\R}\d W\oint_{\Gamma_0}\d Z\int_\R\d u\int_\R\d v\,e^{W^2e^{-2L}}\\
&\times e^{-2W(X+\frac{(1+e^{-2L})r}{\sqrt2})}W^ne^{-\sqrt2r(Y-X)-\sinh(2L)r^2}\sum_{j=0}^4Q_j(u,v,X,Y)\frac{e^{-Z^2e^{2L}+2Z(Y+\frac{(1+e^{2L})r}{\sqrt2})}}{Z^{m+1}}.
\end{aligned}\end{equation}

The integral with respect to $X$ and $Y$ in \eqref{6foldintB} can be computed, since they are Gaussian integrals.
One has
\begin{equation}\label{XYGaussint}\begin{aligned}
\int_\R\d X\,\phi_{2\tau_1-e^{-2L}/2}(u\pm X)e^{-2WX+\sqrt2rX}&=e^{(4\tau_1-e^{-2L})(\sqrt2r-2W)^2/4\mp(\sqrt2r-2W)u},\\
\int_\R\d Y\,\phi_{e^{2L}/2-2\tau_2}(v\pm Y)e^{2ZY-\sqrt2rY}&=e^{(e^{2L}-4\tau_2)(\sqrt2r-2Z)^2/4\pm(\sqrt2r-2Z)v}.
\end{aligned}\end{equation}
Then putting the definitions \eqref{defQi} into \eqref{6foldintB}, using \eqref{XYGaussint} and the notation \eqref{deffg}, one gets
\begin{equation}\label{phiRphiS}\begin{aligned}
&(\varphi^*R_{-L,L}e^{2LD}\varphi)_{n,m}\\
&\quad=\sqrt{\frac{m!}{n!}\frac{2^n}{2^m}}\frac{e^{Lm}}{e^{Ln}}\frac2{(2\pi\I)^2}\int_{\I\R}\d W\oint_{\Gamma_0}\d Z\int_\R\d u\int_\R\d v\,
\frac{W^ne^{\tau_1(\sqrt2r-2W)^2-\sqrt2rW}}{Z^{m+1}e^{\tau_2(\sqrt2r-2Z)^2-\sqrt2rZ}}\\
&\qquad\times\left(f_W(u)\phi_{2(\tau_2-\tau_1)}(v-u)g_Z(v)-(f_W(u)-f_W(-u))T_{\tau_1,\tau_2}^h(u,v)(g_Z(v)-g_Z(-v))\right).
\end{aligned}\end{equation}
By using \eqref{fugu} of Lemma~\ref{lemma:fgphi}, one can see that the integral of the first term on the right-hand side of \eqref{phiRphiS} up to conjugation is
\begin{multline}\label{intid}
\frac2{(2\pi\I)^2}\int_{\I\R}\d W\oint_{\Gamma_0}\d Z\int_\R\d u\int_\R\d v\,
\frac{W^ne^{\tau_1(\sqrt2r-2W)^2-\sqrt2rW}}{Z^{m+1}e^{\tau_2(\sqrt2r-2Z)^2-\sqrt2rZ}}f_W(u)\phi_{2(\tau_2-\tau_1)}(v-u)g_Z(v)\\
=\id(n,m).
\end{multline}
Comparing \eqref{phiRphiS} and \eqref{intid} with \eqref{defKN} and \eqref{defPhi}--\eqref{defPsi} completes the proof.
\end{proof}

\section{Direct derivation of the correlation kernel}\label{s:directcorr}

In this section, we prove Theorem~\ref{thm:corrkernel} where the correlation kernel of $N$ non-intersecting Brownian bridges conditioned to stay below a constant level is determined.
The direct proof of the correlation kernel follows the line of~\cite{TW07b} where the correlation kernel for non-intersecting Brownian bridges were computed without further conditioning.

Let us define the functions
\begin{align}
\wt\Phi_t^i(x)&=\frac1{2^n\sqrt\pi}\left(\frac{1-t}t\right)^{\frac{i+1}2}
\bigg(e^{-\frac{x^2}{2t}}H_i\Big(\frac x{\sqrt{2t(1-t)}}\Big)-e^{-\frac{(2r-x)^2}{2t}}H_i\Big(\frac{2r-x}{\sqrt{2t(1-t)}}\Big)\bigg),\\
\wt\Psi_t^j(x)&=\frac1{j!}\left(\frac t{1-t}\right)^\frac j2
\bigg(e^{-\frac{x^2}{2(1-t)}}H_j\Big(\frac x{\sqrt{2t(1-t)}}\Big)-e^{-\frac{(2r-x)^2}{2(1-t)}}H_j\Big(\frac{2r-x}{\sqrt{2t(1-t)}}\Big)\bigg)
\end{align}
for $t\in[0,1]$, $x<r$ and $i$ integer where $H_i$ is the $i$th Hermite polynomial.
For any $0\le t_1<t_2\le1$ and $x,y<r$, let
\begin{equation}\label{defTtilde}
\wt T_{t_1,t_2}(x,y)=\frac1{\sqrt{2\pi(t_2-t_1)}}\left(e^{-\frac{(x-y)^2}{2(t_2-t_1)}}-e^{-\frac{(2r-x-y)^2}{2(t_2-t_1)}}\right)
\end{equation}
be the free evolution kernel of a Brownian motion below level $r$.

\begin{proposition}\label{prop:jointdensity}
Let $0<t_1<\dots<t_k<1$ be times and $x_1^{(l)}<\dots<x_N^{(l)}$ be positions, $l=1,\dots,k$.
Then the joint density of $N$ non-intersecting Brownian bridges conditioned to stay below level $r$ for $[0,1]$ at times $t_i$ and positions $x_j^{(l)}$ is proportional to
\begin{equation}\label{jointdensity}
\det\left(\wt\Phi_{t_1}^{i-1}(x_j^{(1)})\right)_{i,j=1}^N
\prod_{l=1}^{k-1}\det\left(\wt T_{t_l,t_{l+1}}(x_i^{(l)},x_j^{(l+1)})\right)_{i,j=1}^N
\det\left(\wt\Psi_{t_k}^{i-1}(x_j^{(k)})\right)_{i,j=1}^N.
\end{equation}
\end{proposition}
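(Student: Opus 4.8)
The plan is to derive the joint density via the Karlin--McGregor formula for non-intersecting Brownian bridges conditioned to stay below the level $r$, combined with the Eynard--Mehta theorem, and then to diagonalize the resulting transition kernels in terms of Hermite polynomials.

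First I would write down the transition density of a single Brownian bridge on $[0,1]$ killed upon hitting the level $r$. By the reflection principle this is exactly $\wt T_{t_1,t_2}(x,y)$ as in \eqref{defTtilde}, and the propagator from the starting point $0$ at time $0$ and to the ending point $0$ at time $1$ is handled by the usual Brownian-bridge decomposition (the unconditional Gaussian kernel together with the Doob $h$-transform giving the bridge). The Karlin--McGregor formula then gives the joint density of the $N$ non-intersecting bridges at intermediate times $t_1<\dots<t_k$ as a product of determinants of the killed transition kernels $\wt T_{t_l,t_{l+1}}$, sandwiched between the determinant from $0$ to time $t_1$ and the determinant from time $t_k$ to $1$. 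The entrance and exit factors are singular at the degenerate endpoints $0$ and $1$, so I would take the limits of the starting and ending points going to $0$ in the standard way, using a Vandermonde/Andr\'eief expansion: the determinant of killed Gaussian kernels from $(\varepsilon,\dots)$ to $(x^{(1)}_j)$, after extracting a factor depending only on $\varepsilon$ (which drops out by the ``proportional to'' in the statement), converges to $\det(\wt\Phi^{i-1}_{t_1}(x^{(1)}_j))$, and similarly on the right to $\det(\wt\Psi^{i-1}_{t_k}(x^{(k)}_j))$.

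The computational heart is to identify these limiting entrance/exit functions with the $\wt\Phi$ and $\wt\Psi$ defined just before the proposition. For this I would expand the killed transition kernel near the degenerate endpoint as a Taylor series: writing $\wt T_{0,t}(\varepsilon,x)$ as a difference of two Gaussians and expanding in powers of $\varepsilon$, the $k$th Taylor coefficient produces $e^{-x^2/2t}$ times a polynomial in $x$, which after rescaling $x\mapsto x/\sqrt{2t(1-t)}$ is (up to normalization) the Hermite polynomial $H_k$; the antisymmetry of the bridge-to-bridge killed kernel under $x\mapsto 2r-x$ is what generates the difference of the two Hermite terms in $\wt\Phi^i_t$, and the power $((1-t)/t)^{(i+1)/2}$ comes from collecting the $t$-dependent prefactors of the Gaussian bridge normalization. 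The analogous expansion near $t=1$ gives $\wt\Psi^j_t$, with the roles of $t$ and $1-t$ interchanged and a $1/j!$ from the Taylor expansion. I would be careful that the overall $\varepsilon$-dependent and $j$-independent constants factor out of the determinants cleanly so that only the claimed product of three determinants survives up to an overall constant.

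The main obstacle I anticipate is making the limit of the degenerate starting and ending points rigorous while keeping track of exactly which prefactors are absorbed into the proportionality constant versus which must be retained to get the precise normalizations $\frac{1}{2^n\sqrt\pi}$ and $\frac1{j!}$ appearing in $\wt\Phi$ and $\wt\Psi$. This is essentially a bookkeeping problem: one must expand two difference-of-Gaussian kernels simultaneously at both ends, match the combinatorial constants from the Hermite generating function $e^{2xs-s^2}=\sum_n H_n(x)s^n/n!$, and verify that the matching is consistent with the later use of these functions (in particular that $\wt T_{t_1,t_2}$ maps $\wt\Phi^i_{t_1}$ to $\wt\Phi^i_{t_2}$ and dually on the $\wt\Psi$ side, which serves as a useful internal consistency check). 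The non-intersection (Doob $h$-transform) conditioning contributes only an overall constant by the Karlin--McGregor argument, so it does not interfere with the determinantal structure.
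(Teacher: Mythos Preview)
Your proposal is correct and follows essentially the same route as the paper: Karlin--McGregor with the reflected transition kernel $\wt T$, distinct starting/ending points $-i\varepsilon$ tending to $0$, Taylor expansion in $\varepsilon$, and row operations to convert the resulting monomials into Hermite polynomials. Two minor points: Eynard--Mehta is not needed for this proposition (it enters only in the subsequent step of extracting the correlation kernel from the product-of-determinants structure), and since the claim is only ``proportional to,'' your concern about matching the exact constants $\tfrac{1}{2^n\sqrt\pi}$ and $\tfrac{1}{j!}$ is moot---the paper simply notes that any degree-$(i{-}1)$ polynomials work via row operations, so these normalizations can be absorbed.
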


\begin{proof}[Proof of Proposition~\ref{prop:jointdensity}]
We follow the usual strategy to get $N$ non-intersecting Brownian bridges which start and end at $0$.
We let them start and end at positions $-\varepsilon,-2\varepsilon,\dots,-N\varepsilon$, and then we will let $\varepsilon\to 0$.
By a Karlin--McGregor type formula, their joint density is given by
\begin{equation}\label{epsdensity}
\det\left(\wt T_{0,t_1}(-i\varepsilon,x_j^{(1)})\right)_{i,j=1}^N
\prod_{l=1}^{k-1}\det\left(\wt T_{t_l,t_{l+1}}(x_i^{(l)},x_j^{(l+1)})\right)_{i,j=1}^N\det\left(\wt T_{t_k,1}(x_i^{(k)},-j\varepsilon)\right)_{i,j=1}^N.
\end{equation}
The product of $k-1$ determinants in the middle in \eqref{jointdensity} and in \eqref{epsdensity} is the same.
The general $(i,j)$ entry of the first determinant in \eqref{epsdensity} is
\begin{equation}\begin{aligned}
\wt T_{0,t_1}(-i\varepsilon,x_j^{(1)})
&=\frac1{\sqrt{2\pi t_1}}\left(e^{-\frac{(x_j^{(1)}+i\varepsilon)^2}{2t_1}}-e^{-\frac{(2r-x_j^{(1)}+i\varepsilon)^2}{2t_1}}\right)\\
&=\frac{e^{-\frac{i^2\varepsilon^2}{2t_1}}}{\sqrt{2\pi t_1}}e^{-\frac{(x_j^{(1)})^2}{2t_1}}\left(1-\frac{i\varepsilon x_j^{(1)}}{t_1}+\frac12\frac{i^2\varepsilon^2(x_j^{(1)})^2}{t_1^2}\pm\dots\right)\\
&\qquad-\frac{e^{-\frac{i^2\varepsilon^2}{2t_1}}}{\sqrt{2\pi t_1}}e^{-\frac{(2r-x_j^{(1)})^2}{2t_1}}
\left(1-\frac{i\varepsilon(2r-x_j^{(1)})}{t_1}+\frac12\frac{i^2\varepsilon^2(2r-x_j^{(1)})^2}{t_1^2}\pm\dots\right)
\end{aligned}\end{equation}
where we used Taylor expansion in the last step.

By elementary row operations with the matrix in the first determinant in \eqref{epsdensity}, one obtains
\begin{multline}\label{detmonom}
\det\left(\wt T_{0,t_1}(-i\varepsilon,x_j^{(1)})\right)_{i,j=1}^N\\
=c(\varepsilon)\left[\det\bigg(e^{-\frac{(x_j^{(1)})^2}{2t_1}}(x_j^{(1)})^{i-1}-e^{-\frac{(2r-x_j^{(1)})^2}{2t_1}}(2r-x_j^{(1)})^{i-1}\bigg)_{i,j=1}^N+\O(\varepsilon)\right]
\end{multline}
where $c(\varepsilon)$ is a constant which does not depend on the $x_j^{(1)}$ variables.
(Notice that $c(\varepsilon)$ depends on $\varepsilon$ asymptotically as $\varepsilon^{N(N-1)/2}$, but it is unimportant for the proposition.)
The determinant on the right-hand side of \eqref{detmonom} is already independent of $\varepsilon$, hence it is also the factor which appears in the $\varepsilon\to0$ limit.
By further row manipulations in the determinant on the right-hand side of \eqref{detmonom},
one can turn the monomials $(x_j^{(1)})^{i-1}$ and $(2r-x_j^{(1)})^{i-1}$ into any polynomials of degree $i-1$, but with the same polynomial for both terms.
In particular, by choosing the $(i-1)$st Hermite polynomial with rescaled argument $x\mapsto H_{i-1}(x/\sqrt{2t(1-t)})$,
one gets that the determinant on the right-hand side of \eqref{detmonom} is proportional to the first factor in \eqref{jointdensity}.
The argument for the last determinant is the same, hence the proof is complete.
\end{proof}

\begin{proposition}\label{prop:phi=phi}
With the relation \eqref{deftauhtilde} between the variables $t_1,t_2$ and $\tau_1,\tau_2$ and with \eqref{deftauu} between $x_i$ and $u_i$, one has the following equality of the conjugated functions
\begin{align}
\Phi_\tau^n(u)&=e^{-\frac{r^2}2-\frac{(x-r)^2}{2(1-t)}}\wt\Phi_t^n(x),\label{phi=phi}\\
\Psi_\tau^n(u)&=e^{\frac{r^2}2+\frac{(x-r)^2}{2(1-t)}}\wt\Psi_t^n(x),\label{psi=psi}\\
T_{\tau_1,\tau_2}(u_1,u_2)&=\sqrt{2(1-t_1)(1-t_2)}e^{\frac{(x_1-r)^2}{2(1-t_1)}-\frac{(x_2-r)^2}{2(1-t_2)}}\wt T_{t_1,t_2}(x_1,x_2).\label{t=t}
\end{align}
\end{proposition}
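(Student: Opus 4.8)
\textbf{Proof plan for Proposition~\ref{prop:phi=phi}.}
The plan is to verify each of the three identities \eqref{phi=phi}, \eqref{psi=psi}, \eqref{t=t} by direct computation, using the change of variables \eqref{deftauhtilde}--\eqref{deftauu} together with the integral representations of the harmonic oscillator functions. For \eqref{phi=phi} I would start from the definition \eqref{defPhi} of $\Phi_\tau^n(u)$ as a contour integral over $\I\R$ and, in parallel, from a suitable integral representation of $\wt\Phi_t^n(x)$ obtained by inserting into its definition the Hermite integral formula analogous to \eqref{phirepr1}, namely $H_i(\xi)=\frac{i!}{2\pi\I}\oint e^{2\xi w-w^2}w^{-(i+1)}\,\d w$ (or the version with $e^{w^2-2\xi w}$ on $\I\R$). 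The point is that $\wt\Phi_t^n(x)$ contains the combination $e^{-x^2/(2t)}H_n(x/\sqrt{2t(1-t)})-e^{-(2r-x)^2/(2t)}H_n((2r-x)/\sqrt{2t(1-t)})$, which after the substitution $x\mapsto 2r-x$ is exactly the antisymmetrization under $u\mapsto-u$ appearing in $f_W(u)-f_W(-u)$ once one uses $u=(x-r)/(\sqrt2(1-t))$, so that $u\mapsto -u$ corresponds to $x\mapsto 2r-x$. After rescaling the contour variable (the natural scaling is $W=w\cdot(\text{power of }t/(1-t))$, mirroring the $W=e^Lw$ substitution in the proof of Proposition~\ref{prop:kerneleq}) and matching the Gaussian prefactors, the exponent $\tau(\sqrt2 r-2W)^2-\sqrt2 rW$ in \eqref{defPhi} should reproduce the $t$-dependent Gaussian weights together with the conjugation factor $e^{-r^2/2-(x-r)^2/(2(1-t))}$.

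For \eqref{psi=psi} the argument is entirely parallel, now starting from the residue representation \eqref{defPsi} of $\Psi_\tau^m(v)$ and the dual Hermite integral formula (the $z$-contour version, cf.\ \eqref{phirepr2}), with the reciprocal power of $t/(1-t)$; the antisymmetrization $g_Z(v)-g_Z(-v)$ again matches the difference of the two Hermite terms in $\wt\Psi_t^m$ under $x\mapsto 2r-x$, and the exponent $-\tau(\sqrt2 r-2Z)^2+\sqrt2 rZ$ supplies the complementary Gaussian factors and the inverse conjugation $e^{r^2/2+(x-r)^2/(2(1-t))}$. Identity \eqref{t=t} is the most elementary: $T_{\tau_1,\tau_2}(u_1,u_2)$ from \eqref{defT2} is a difference of heat kernels $\phi_{2(\tau_2-\tau_1)}(u_2-u_1)-\phi_{2(\tau_2-\tau_1)}(u_2+u_1)$, and substituting $u_i=(x_i-r)/(\sqrt2(1-t_i))$ and $\tau_i=\tfrac14 t_i/(1-t_i)$ one computes $\tau_2-\tau_1=\tfrac14(t_2-t_1)/((1-t_1)(1-t_2))$; the difference $u_2\mp u_1$ and the variance then combine, after completing the square, into the difference of heat kernels $\wt T_{t_1,t_2}(x_1,x_2)$ from \eqref{defTtilde} times the claimed Jacobian-type prefactor $\sqrt{2(1-t_1)(1-t_2)}$ and the Gaussian conjugation. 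One should double-check that the reflection $u_1\mapsto -u_1$ corresponds to $x_1\mapsto 2r-x_1$, which is exactly what turns the second heat kernel in \eqref{defT2} into the image-charge term $e^{-(2r-x_1-x_2)^2/(2(t_2-t_1))}$ in \eqref{defTtilde}.

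The main obstacle I anticipate is purely bookkeeping: tracking the exact powers of $t/(1-t)$, the factors of $2$, $\sqrt\pi$ and $n!$ between the normalization in \eqref{defphi}--\eqref{defKHerm} (and the prefactors $\tfrac1{2^n\sqrt\pi}$, $\tfrac1{j!}$ appearing in $\wt\Phi_t^i$, $\wt\Psi_t^j$) and the contour-integral normalizations $\tfrac1{\pi\I}$, $\tfrac1{2\pi\I}$ in \eqref{defPhi}--\eqref{defPsi}, so that the conjugation factors come out precisely as $e^{\pm(r^2/2+(x-r)^2/(2(1-t)))}$ with no residual $t$- or $n$-dependent constant. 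A clean way to organize this is to first record the rescaling of the contour variable that maps $e^{\tau(\sqrt2 r-2W)^2-\sqrt2 rW}$ to the Gaussian weight $e^{-x^2/(2t)}$ (completing the square in $W$ produces a Gaussian in a shifted variable whose width is dictated by $\tau$, hence by $t/(1-t)$), and only then read off the leftover prefactor. Since these are algebraic identities valid for all $N$ and all admissible $(t,x)$, no analytic estimates are needed; the whole proof is a matched pair of Laplace/contour-integral manipulations plus the elementary Gaussian computation for \eqref{t=t}.
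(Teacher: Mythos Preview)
Your proposal is correct and follows essentially the same route as the paper. The paper organizes the computation by first isolating the Hermite-polynomial representations of $\Phi_\tau^n$ and $\Psi_\tau^m$ as a separate proposition (obtained precisely by the contour-variable rescaling you describe, turning \eqref{defPhi}--\eqref{defPsi} into expressions involving $H_n$ via the integral formulas for Hermite polynomials), and then matches these against $\wt\Phi_t^n$, $\wt\Psi_t^n$ using the same change of variables $u\leftrightarrow x$, $\tau\leftrightarrow t$ and the same reflection correspondence $u\mapsto -u \Leftrightarrow x\mapsto 2r-x$ that you identify; the computation for \eqref{t=t} is identical to yours, including the key identity $4(\tau_2-\tau_1)=(t_2-t_1)/((1-t_1)(1-t_2))$.
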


With Proposition~\ref{prop:phi=phi}, proving Theorem~\ref{thm:corrkernel} is easy.

\begin{proof}[Proof of Theorem~\ref{thm:corrkernel}]
The correlation kernel can be directly obtained from the general formula given in~\cite{Joh10},
since the joint density of $N$ non-intersecting Brownian bridges conditioned to stay below level $r$ is given by \eqref{jointdensity} where the functions which appear in the determinants satisfy
\begin{align}
\int_{-\infty}^r\d x\,\frac{\wt\Phi_{t_1}^i(x)}{2^{1/4}\sqrt{1-t_1}}\wt T_{t_1,t_2}(x,y)&=\frac{\wt\Phi_{t_2}^i(y)}{2^{1/4}\sqrt{1-t_2}},\\
\int_{-\infty}^r\d y\,\wt T_{t_1,t_2}(x,y)\frac{\wt\Psi_{t_2}^j(y)}{2^{1/4}\sqrt{1-t_2}}&=\frac{\wt\Psi_{t_1}^j(x)}{2^{1/4}\sqrt{1-t_1}},\\
\int_{-\infty}^r\d x\,\frac{\wt\Phi_t^i(x)}{2^{1/4}\sqrt{1-t_1}}\frac{\wt\Psi_t^j(x)}{2^{1/4}\sqrt{1-t_2}}&=(\id-K_0)(i,j)
\end{align}
which is a direct consequence of Proposition~\ref{prop:compatibility} knowing the relations proved in Proposition~\ref{prop:phi=phi}.
Hence the extended kernel can be written for $x_1,x_2\le r$ as
\begin{multline}
K_\ext(t_1,x_1;t_2,x_2)=-\id_{\tau_1<\tau_2}\wt T_{t_1,t_2}(x_1,x_2)\\
+\sum_{n,m=0}^{N-1}\frac{\wt\Psi_{t_1}^n(x_1)}{2^{1/4}\sqrt{1-t_1}}(\id-K_0)^{-1}(n,m)\frac{\wt\Phi_{t_2}^m(x_2)}{2^{1/4}\sqrt{1-t_2}}.
\end{multline}
Due to Proposition~\ref{prop:phi=phi}, one can write the correlation kernel in terms of the variables $\tau_i,u_i$ according to \eqref{deftauu}
since the extra factor $1/\sqrt{2(1-t_1)(1-t_2)}$ is the volume element.
This proves that the correlation kernel in terms of the the natural variables $\tau_i,u_i$ is given by \eqref{defKexttauu}.
It is also consistent with the definition \eqref{defKexttx} of the correlation kernel, which finishes the proof.
\end{proof}

For the proof of Proposition~\ref{prop:phi=phi}, the following representations are useful.

\begin{proposition}\label{prop:Hermiterepr}
The functions $\Phi_\tau^n$ and $\Psi_\tau^m$ admit the following representations in terms of Hermite polynomials
\begin{equation}\label{phirepr}
\Phi_\tau^n(u)=\frac1{2^{2n+1}\tau^{\frac{n+1}2}\sqrt\pi}
e^{-\left(\frac{(1+4\tau)r}{2\sqrt{2\tau}}+\frac u{2\sqrt\tau}\right)^2}H_n\Big(\frac{(1+4\tau)r}{2\sqrt{2\tau}}+\frac u{2\sqrt\tau}\Big)e^{2\tau r^2+\sqrt2ru}-(u\leftrightarrow -u)
\end{equation}
and
\begin{equation}\label{psirepr}
\Psi_\tau^m(u)=\frac{(2\sqrt\tau)^m}{m!} H_m\Big(\frac{(1+4\tau)r}{2\sqrt{2\tau}}+\frac u{2\sqrt\tau}\Big)e^{-2\tau r^2-\sqrt2ru}-(u\leftrightarrow -u).
\end{equation}
The notation $(u\leftrightarrow -u)$ means that we have the same term with $u$ replaced by $-u$.
\end{proposition}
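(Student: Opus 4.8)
The plan is to start from the contour-integral definitions \eqref{defPhi} and \eqref{defPsi} of $\Phi_\tau^n$ and $\Psi_\tau^m$ and evaluate the integrals by recognizing them as generating-function formulas for the Hermite polynomials. For $\Psi_\tau^m$, the integrand after inserting \eqref{deffg} is
\[
Z^{-(m+1)}e^{-\tau(\sqrt2 r-2Z)^2+\sqrt2 rZ}\bigl(e^{-(\sqrt2r-2Z)v}-e^{(\sqrt2r-2Z)v}\bigr),
\]
and expanding the square gives a factor $e^{-2\tau r^2-\sqrt2 r v}$ (for the $g_Z(v)$ piece) times $e^{4\tau r Z-4\tau Z^2+2Zv}$. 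Collecting the $Z$-dependence, the residue at $Z=0$ picks out the coefficient of $Z^m$ in $\exp\!\bigl(-4\tau Z^2 + 2Z(2\tau r + v + \text{const})\bigr)$, which by the classical generating identity $e^{2xt-t^2}=\sum_n H_n(x)t^n/n!$ is exactly $\frac{(2\sqrt\tau)^m}{m!}H_m\bigl(\frac{(1+4\tau)r}{2\sqrt{2\tau}}+\frac{v}{2\sqrt\tau}\bigr)$ after the right rescaling $t = 2\sqrt\tau Z$. The antisymmetric combination in \eqref{defPsi} produces the $-(u\leftrightarrow-u)$ term, giving \eqref{psirepr}.

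For $\Phi_\tau^n$ the argument is dual: the integral is over $\I\R$ and the integrand carries $W^n e^{\tau(\sqrt2r-2W)^2-\sqrt2rW}f_W(u)$. Completing the square in $W$ the exponent becomes a Gaussian in $W$ centered appropriately, and the $\int_{\I\R}\d W\, W^n e^{\tau(\,\cdot\,)}$ is evaluated using the Hermite-function integral representation \eqref{phirepr1} (equivalently, the Rodrigues-type formula $H_n(x)e^{-x^2}=\frac{(-2\I)^n}{\sqrt\pi}\int_\R t^n e^{-t^2-2\I t x}\,\d t$, deformed to the imaginary axis). This is precisely the move already used to prove Proposition~\ref{prop:kerneleq}, so the mechanics are in hand; one only needs to keep track of the Gaussian prefactor $e^{-(\cdots)^2}$ and the power $2^{-2n-1}\tau^{-(n+1)/2}$ that emerge from the rescaling $W \mapsto$ (shifted/scaled variable), yielding \eqref{phirepr}.

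The key steps, in order, are: (i) substitute \eqref{deffg} and expand $(\sqrt2r-2W)^2$, $(\sqrt2r-2Z)^2$ to split off the $u$- and $r$-dependent exponential factors that sit outside the Hermite polynomial in \eqref{phirepr}--\eqref{psirepr}; (ii) for $\Psi$, read off the residue at $0$ via the Hermite generating function with $t=2\sqrt\tau Z$; (iii) for $\Phi$, complete the square in $W$ and apply the imaginary-axis Hermite integral representation with the substitution $w$ proportional to $\sqrt\tau\,W$; (iv) identify the combined exponential prefactors and normalization constants with those displayed in the statement; (v) note that both defining integrals are already in the antisymmetrized form $(\text{term in }u)-(\text{term in }-u)$, which reproduces the $-(u\leftrightarrow-u)$ notation verbatim. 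The main obstacle I anticipate is purely bookkeeping: matching the shift $\frac{(1+4\tau)r}{2\sqrt{2\tau}}+\frac{u}{2\sqrt\tau}$ inside $H_n$ and the scalar prefactors after completing the square, since several Gaussian and power-of-$2$ factors must cancel in exactly the right way; there is no genuine analytic difficulty beyond justifying the contour deformation to $\I\R$, which is standard because the relevant integrand decays (it is Gaussian in $W$ after completing the square).
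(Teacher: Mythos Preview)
Your proposal is correct and follows essentially the same route as the paper: for $\Psi_\tau^m$ you rescale $t=2\sqrt\tau\,Z$ and read off the coefficient via the Hermite generating function $e^{2xt-t^2}=\sum_n H_n(x)t^n/n!$, which is exactly the paper's change of variables $z=2\sqrt\tau\,Z$ followed by the residue identity \eqref{Hermitereprz}; for $\Phi_\tau^n$ you rescale $W$ by $\sqrt\tau$ and invoke the imaginary-axis Hermite representation \eqref{phirepr1}, which is precisely the paper's substitution $w=2\sqrt\tau\,W$ and application of \eqref{Hermitereprw}. The only cosmetic difference is that the paper does not phrase the $\Phi$ step as ``completing the square'' but simply expands the exponent and changes variables directly; the content is identical.
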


\begin{proof}[Proof of Proposition~\ref{prop:Hermiterepr}]
Expanding the exponent of \eqref{defPhi} and doing the change of variables $2\sqrt\tau W=w$, one gets
\begin{equation}\label{Hermiterepr1}\begin{aligned}
\Phi_\tau^n(u)&=\frac1{\pi\I}\int_{\I\R}\d W\,W^ne^{4\tau W^2-(1+4\tau)\sqrt2rW+2\tau r^2} e^{-2uW+\sqrt2ru}-(u\leftrightarrow -u)\\
&=\frac1{(2\sqrt\tau)^{n+1}}\frac1{\pi\I}\int_{\I\R}\d w\,w^ne^{w^2-2\left(\frac{(1+4\tau)r}{2\sqrt{2\tau}}+\frac u{2\sqrt\tau}\right)w+2\tau r^2+\sqrt2ru}-(u\leftrightarrow -u).
\end{aligned}\end{equation}
By \eqref{defphi} and \eqref{phirepr1}, one has
\begin{equation}\label{Hermitereprw}
\frac1{\pi\I}\int_{\I\R}\d w\,w^ne^{w^2-2xw}=\frac1{2^n\sqrt\pi}e^{-x^2}H_n(x).
\end{equation}
Then the integral on the right-hand side of \eqref{Hermiterepr1} can be expressed with Hermite polynomials using \eqref{Hermitereprw} which immediately yields \eqref{phirepr}.

The representation \eqref{psirepr} is proved similarly.
With the change of variables $2\sqrt\tau Z=z$ in \eqref{defPsi}, one obtains
\begin{equation}\label{Hermiterepr2}\begin{aligned}
\Psi_\tau^m(u)&=\frac1{2\pi\I}\oint_{\Gamma_0}\d Z\,Z^{-(m+1)}e^{-4\tau Z^2+(1+4\tau)\sqrt2rZ-2\tau r^2}e^{2uZ-\sqrt2ru}-(u\leftrightarrow -u)\\
&=(2\sqrt\tau)^m\frac1{2\pi\I}\oint_{\Gamma_0}\d z\,z^{-(m+1)}e^{-z^2+2\left(\frac{(1+4\tau)r}{2\sqrt{2\tau}}+\frac u{2\sqrt\tau}\right)z-2\tau r^2-\sqrt2ru}-(u\leftrightarrow -u).
\end{aligned}\end{equation}
Using \eqref{defphi} and the representation \eqref{phirepr2} yields
\begin{equation}\label{Hermitereprz}
\frac1{2\pi\I}\oint_{\Gamma_0}\d z\,\frac{e^{-z^2+2zx}}{z^{n+1}}=\frac1{n!}H_n(x)
\end{equation}
Then the two integrals on the right-hand side of \eqref{Hermiterepr2} are rewritten with \eqref{Hermitereprz} which proves \eqref{psirepr}.
\end{proof}

\begin{proof}[Proof of Proposition~\ref{prop:phi=phi}]
We proceed by direct computation.
To prove \eqref{phi=phi}, one can first rearrange the right-hand side to get
\begin{equation}\label{phicompute}\begin{aligned}
&e^{-\frac{r^2}2-\frac{(x-r)^2}{2(1-t)}}\wt\Phi_t^n(x)\\
&=C_{n,t}e^{-\frac{r^2}2-\frac{(x-r)^2}{2(1-t)}}
\left(e^{-\frac{x^2}{2t}}H_n\Big(\frac x{\sqrt{2t(1-t)}}\Big)-e^{-\frac{(2r-x)^2}{2t}}H_n\Big(\frac{2r-x}{\sqrt{2t(1-t)}}\Big)\right)\\
&=C_{n,t}\left(e^{-\frac{x^2}{2t(1-t)}}H_n\Big(\frac x{\sqrt{2t(1-t)}}\Big)e^{\frac{rx}{1-t}+\frac{(t-2)r^2}{2(1-t)}}
-e^{-\frac{(2r-x)^2}{2t(1-t)}}H_n\Big(\frac{2r-x}{\sqrt{2t(1-t)}}\Big)e^{-\frac{rx}{1-t}+\frac{(t+2)r^2}{2(1-t)}}\right)
\end{aligned}\end{equation}
where $C_{n,t}=\frac1{2^n\sqrt\pi}\left(\frac{1-t}t\right)^{\frac{n+1}2}$.

Next we rewrite the right-hand side of \eqref{phicompute} in terms of the variables $\tau$ and $u$.
Using \eqref{deftauu}, one has
\begin{equation}\label{phieq1}
\frac x{\sqrt{2t(1-t)}}=\frac{(1+4\tau)r}{2\sqrt{2\tau}}+\frac u{2\sqrt\tau},\qquad\frac{2r-x}{\sqrt{2t(1-t)}}=\frac{(1+4\tau)r}{2\sqrt{2\tau}}-\frac u{2\sqrt\tau}
\end{equation}
and
\begin{equation}\label{phieq2}
\pm\frac{rx}{1-t}+\frac{(t\mp2)r^2}{2(1-t)}=2\tau r^2\pm\sqrt2ru.
\end{equation}
By substituting \eqref{deftauhtilde}, \eqref{phieq1} and \eqref{phieq2} on the right-hand side of \eqref{phicompute}, one exactly gets the representation \eqref{phirepr}, which proves \eqref{phi=phi}.

Similarly,
\begin{equation}\label{psicompute}\begin{aligned}
&e^{\frac{r^2}2+\frac{(x-r)^2}{2(1-t)}}\wt\Psi_t^n(x)\\
&\quad=\frac1{n!}\left(\frac t{1-t}\right)^{\frac n2}
e^{\frac{r^2}2+\frac{(x-r)^2}{2(1-t)}}
\bigg(e^{-\frac{x^2}{2(1-t)}}H_n\Big(\frac x{\sqrt{2t(1-t)}}\Big)-e^{-\frac{(2r-x)^2}{2(1-t)}}H_n\Big(\frac{2r-x}{\sqrt{2t(1-t)}}\Big)\bigg)\\
&\quad=\frac1{n!}\left(\frac t{1-t}\right)^{\frac n2}
\bigg(H_n\Big(\frac x{\sqrt{2t(1-t)}}\Big)e^{-\frac{rx}{1-t}-\frac{(t-2)r^2}{2(1-t)}}-H_n\Big(\frac{2r-x}{\sqrt{2t(1-t)}}\Big)e^{\frac{rx}{1-t}-\frac{(t+2)r^2}{2(1-t)}}\bigg).
\end{aligned}\end{equation}
Then by \eqref{deftauhtilde}, \eqref{phieq1} and \eqref{phieq2}, one can write the right-hand side of \eqref{psicompute} in terms of the variables $\tau$ and $u$.
Comparing this with \eqref{psirepr}, \eqref{psi=psi} is proved.

Finally, by definition \eqref{defTtilde} and by using \eqref{deftauu},
\begin{equation}\begin{aligned}
&\sqrt{2(1-t_1)(1-t_2)}e^{\frac{(x_1-r)^2}{2(1-t_1)}-\frac{(x_2-r)^2}{2(1-t_2)}}\wt T_{t_1,t_2}(x_1,x_2)\\
&\quad=\sqrt{\frac{(1-t_1)(1-t_2)}{\pi(t_2-t_1)}}e^{(1-t_1)u_1^2-(1-t_2)u_2^2}\left(e^{-\frac{((1-t_1)u_1-(1-t_2)u_2)^2}{t_2-t_1}}-e^{-\frac{((1-t_1)u_1+(1-t_2)u_2)^2}{t_2-t_1}}\right)\\
&\quad=\sqrt{\frac{(1-t_1)(1-t_2)}{\pi(t_2-t_1)}}\left(e^{-\frac{(1-t_1)(1-t_2)}{t_2-t_1}(u_1-u_2)^2}-e^{-\frac{(1-t_1)(1-t_2)}{t_2-t_1}(u_1+u_2)^2}\right).
\end{aligned}\end{equation}
By noticing that
\begin{equation}
4(\tau_2-\tau_1)=\frac{t_2-t_1}{(1-t_1)(1-t_2)},
\end{equation}
the proof is complete.
\end{proof}

\section{Asymptotics}\label{s:asymptotics}

This section is devoted to the proof of Theorem~\ref{thm:asymptotics} and Theorem~\ref{thm:asymptoticsB}.
To this end, we start with a lemma which contains the asymptotic properties of the harmonic oscillator functions which are necessary for further proofs.

\begin{lemma}\label{lemma:phiasymp}
For the $n$th harmonic oscillator function $\varphi_n$, one has
\begin{equation}\label{phiconv}
\lim_{n\to\infty}2^{-1/4}n^{1/12}\varphi_n\left(\sqrt{2n}+\frac{sn^{-1/6}}{\sqrt2}\right)=\Ai(s)
\end{equation}
uniformly on any compact subset of $\R$ for $s$.
Further, for any $c>0$, there are $s_0$ and $n_0$ such that for any $s\ge s_0$ and $n\ge n_0$,
\begin{equation}\label{phibound}
\left|2^{-1/4}n^{1/12}\varphi_n\left(\sqrt{2n}+\frac{sn^{-1/6}}{\sqrt2}\right)\right|\le e^{-cs}.
\end{equation}
There is a universal constant $C$ such that for any $n\ge1$,
\begin{equation}\label{unifphibound}
\sup_{x\in\R}\left|2^{-1/4}n^{1/12}\varphi_n(x)\right|\le C.
\end{equation}
\end{lemma}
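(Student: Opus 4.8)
\emph{Overall approach.} I would derive all three estimates from the contour integral representation \eqref{phirepr2}, combined with Stirling's formula for $\sqrt{n!/2^n}$ and steepest descent. Writing the integrand of \eqref{phirepr2} as $e^{-z^2+2zx-(n+1)\ln z}$ and rescaling $z=\sqrt{n/2}\,\omega$, the exponent becomes $nG_\lambda(\omega)-\ln\omega+\mathrm{const}$ with $G_\lambda(\omega)=-\tfrac12\omega^2+2\lambda\omega-\ln\omega$ and $\lambda=x/\sqrt{2n}$, whose saddle points are $\omega_\pm=\lambda\pm\sqrt{\lambda^2-1}$: a complex conjugate pair on the unit circle for $\lambda<1$, a pair on the positive axis for $\lambda>1$, and a single degenerate saddle at $\omega=1$ for $\lambda=1$ (that is $x=\sqrt{2n}$), where $G_1(1)=\tfrac32$, $G_1''(1)=0$, $G_1'''(1)=-2$ — the source of the Airy behaviour. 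In each regime $\Gamma_0$ is deformed to a steepest descent contour through the relevant saddle(s); a bookkeeping of the exponential prefactors, using Stirling together with $e^{-x^2/2}=e^{-n\lambda^2}$ and the identity $\Re G_\lambda(\omega_+)=\lambda^2+\tfrac12$, shows that they always cancel, leaving a bounded quantity.

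\emph{Statement \eqref{phiconv}.} For $x=\sqrt{2n}+\tfrac{sn^{-1/6}}{\sqrt2}$ one has $\lambda=1+\tfrac s2 n^{-2/3}$, so both saddles lie within $\O(n^{-1/3})$ of $\omega=1$. Substituting $\omega=1+n^{-1/3}\zeta$ and Taylor expanding gives $nG_\lambda(\omega)=(\tfrac{3n}{2}+sn^{1/3})-\tfrac13\zeta^3+s\zeta+\O(n^{-1/3}(\zeta^4+|\zeta|))$. After checking that the $\zeta$-independent prefactor $\sqrt{n!/2^n}\,\pi^{-1/4}e^{-x^2/2}(n/2)^{-n/2}e^{3n/2+sn^{1/3}}n^{-1/3}$ equals $2^{1/4}n^{-1/12}(1+o(1))$, one deforms $\Gamma_0$ to the steepest descent contour leaving $\omega=1$ at angles $\approx\pm\pi/3$, controls the tails by the cubic term, and passes to the limit to obtain $\tfrac1{2\pi\I}\int e^{-\zeta^3/3+s\zeta}\,\d\zeta=\Ai(s)$ (after $\zeta\mapsto-\zeta$). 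Uniformity on a compact $s$-set is automatic since the remainder is uniform there.

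\emph{Statement \eqref{phibound}.} I would split $\{s\ge s_0\}$ into a near-edge range $s_0\le s\le n^{2/3-\varepsilon}$ and a far range $s\ge n^{2/3-\varepsilon}$. In the near-edge range the computation above persists, with the relevant saddle at $|\zeta_\ast|\sim\sqrt s$ and quartic remainder still $o(1)$ because $s=o(n^{2/3})$; bounding the integral by the modulus of the rescaled integrand at $\zeta_\ast$ gives $|2^{-1/4}n^{1/12}\varphi_n|\le e^{-\frac23 s^{3/2}(1+o(1))}\le e^{-cs}$ once $s\ge s_0(c)$. In the far range I would use the crude bound from the circular contour $|z|=\sqrt{n/2}$: there $x\ge\sqrt{2n}=2\sqrt{n/2}$, so $\max_{|z|=\sqrt{n/2}}\Re(-z^2+2zx)=-\tfrac n2+x\sqrt{2n}$, and with $\sqrt{n!/2^n}(n/2)^{-n/2}\sim(2\pi n)^{1/4}e^{-n/2}$ this yields $|\varphi_n(x)|\le 2^{1/4}n^{1/4}e^{-\frac12(x-\sqrt{2n})^2}$, hence $|2^{-1/4}n^{1/12}\varphi_n|\le n^{1/3}e^{-s^2n^{-1/3}/4}\le e^{-cs}$ for $s\ge n^{2/3-\varepsilon}$ and $n$ large. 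The two ranges overlap once $n$ is large.

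\emph{Statement \eqref{unifphibound} and the main obstacle.} Using $\varphi_n(-x)=(-1)^n\varphi_n(x)$ I restrict to $x\ge0$. For $x\ge\sqrt{2n}$, the far bound above (with the radius optimized to the real saddle $\rho=\tfrac12(x-\sqrt{x^2-2n})$) is $n^{a}e^{-\frac12(x-\sqrt{2n})^2}=\O(1)$ uniformly for some fixed $a$. For $0\le x<\sqrt{2n}$ with $1-\lambda$ of order larger than $n^{-2/3}$, deforming $\Gamma_0$ through the conjugate pair of simple saddles $\omega_\pm=\lambda\pm\I\sqrt{1-\lambda^2}$ produces two oscillatory contributions of modulus $\O(n^{-1/4}(1-\lambda^2)^{-1/4})$ (exponential prefactors cancelling as above), so $|2^{-1/4}n^{1/12}\varphi_n(x)|\le Cn^{-1/6}(1-\lambda^2)^{-1/4}=\O(1)$; the remaining zone $1-\lambda=\O(n^{-2/3})$, which subsumes $|x-\sqrt{2n}|\lesssim n^{-1/6}$, is covered by the uniform Airy-type expansion interpolating \eqref{phiconv} and the two-saddle regime, where the limiting object is a bounded multiple of $\Ai$. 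Taking the worst constant over these regimes gives a universal $C$. The real content throughout is uniformity: controlling the cubic and quartic remainders uniformly in $s$ (for \eqref{phiconv}) and over the whole near-edge range (for \eqref{phibound}), and — most delicately for \eqref{unifphibound} — splicing the two-saddle bulk estimate to the one-saddle edge estimate through the transition zone $x\approx\sqrt{2n}$, where $\omega_\pm$ coalesce and the individual saddle-point approximations degrade, so that the constant stays bounded there. All of this is classical, and one may alternatively just invoke the Plancherel--Rotach asymptotics for Hermite functions and the accompanying uniform bounds.
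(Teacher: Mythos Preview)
Your proposal is a correct, self-contained steepest-descent derivation from the contour representation \eqref{phirepr2}. The paper takes a much shorter, citation-based route: it identifies the rescaled $\varphi_n$ with a quantity $\alpha_n(0,s)$ already analysed in~\cite{FSW15} (via the other representation \eqref{phirepr1} and a change of variables), then invokes Lemmas~5.8--5.9 of that reference for \eqref{phiconv} and \eqref{phibound}, noting that the proof there in fact yields decay $\exp(-s^{3/2})$, which dominates $e^{-cs}$ for any $c$; for \eqref{unifphibound} it simply cites Krasikov's sharp sup-norm bounds on Hermite functions~\cite{Kra04}. Your approach buys self-containment and makes the mechanism transparent (coalescing saddles at the edge, two simple saddles in the bulk, the crude circular bound far outside), at the cost of several pages of routine but non-trivial uniform-error bookkeeping; the paper's approach buys brevity by outsourcing exactly that bookkeeping to the literature, which is reasonable since \eqref{phiconv} is classical Plancherel--Rotach and \eqref{unifphibound} is Krasikov's theorem verbatim.

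One small imprecision worth fixing if you write this out: in your near-edge range $s_0\le s\le n^{2/3-\varepsilon}$ for \eqref{phibound}, the phrase ``quartic remainder still $o(1)$'' is not literally right --- at the relevant saddle $|\zeta_\ast|\sim\sqrt s$ the quartic term is $\O(n^{-1/3}s^2)$, which can be large. What matters, and what your conclusion actually uses, is that it is $o(s^{3/2})$ (because $n^{-1/3}s^{1/2}=o(1)$ in that range), so the exponent is $-\tfrac23 s^{3/2}(1+o(1))$ as stated.
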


\begin{proof}[Proof of Lemma~\ref{lemma:phiasymp}]
The formula \eqref{phiconv} is well-known, see e.g.~\cite{Sze67}.
It can be also seen in Lemma~5.8 of~\cite{FSW15}, while \eqref{phibound} can be derived directly from Lemma~5.9 of~\cite{FSW15}.
Using Definition 5.7 of~\cite{FSW15}, one has
\begin{equation}\begin{aligned}
\alpha_n(0,s)&=n^{1/3}e^{3n/2+sn^{1/3}}\frac1{2\pi\I}\int_{\I\R}\d w\,e^{nw^2/2+(2n+sn^{1/3})w}(-w)^n\\
&=2^{n-1/2}n^{-n/2-1/6}e^{3n/2+sn^{1/3}}\frac1{\pi\I}\int_{\I\R}\d W\,e^{W^2-2(\sqrt{2n}+sn^{1/3}/\sqrt2)W}W^n\\
&=2^{n-1/2}n^{-n/2-1/6}e^{3n/2+sn^{1/3}}\sqrt\frac{n!}{2^n}\pi^{-1/4}e^{(\sqrt{2n}+sn^{1/3}/\sqrt2)^2/2}\varphi_n\left(\sqrt{2n}+\frac{sn^{-1/6}}{\sqrt2}\right)\\
&=2^{-1/4}n^{1/12}\varphi_n\left(\sqrt{2n}+\frac{sn^{-1/6}}{\sqrt2}\right)
\end{aligned}\end{equation}
with the change of variables $W=-\sqrt{n/2}w$ in the second equality, with the use of \eqref{phirepr1} in the third and by Stirling's formula in the last one.
Hence the results of~\cite{FSW15} apply and one gets \eqref{phiconv} and \eqref{phibound} with $c=1$.
By inspecting the proof of Lemma~5.9 in~\cite{FSW15}, one can realize that the terms which appear in the integral representation of $\beta_t(r,s)$ in \mbox{(5.40)--(5.42)} of~\cite{FSW15}
are bounded by a large constant times $\exp(-s^{3/2})$ which is less than $e^{-cs}$ for any $c$ if $s$ is large enough.
By the last remark after (5.47) in the proof of Proposition~5.9 in~\cite{FSW15}, one gets that the same bound applies for $\alpha_t(r,s)$ as required.

Finally, \eqref{unifphibound} is an easy consequence of the detailed bound obtained in~\cite{Kra04},
see also (A.54) of~\cite{FerPhD} where $p_k(x)=H_k(x)$ (except for a small typo: $2^{2/k}$ should be $2^{k/2}$).
By replacing $x/\sqrt{2N}$ by $x$ and by \eqref{defphi}, one exactly gets \eqref{unifphibound}.
\end{proof}

Then one has the following limits as $N\to\infty$ and bounds for the functions which appear in the kernel of $N$ non-intersecting Brownian bridges.

\begin{proposition}\label{prop:asymptotics}
Consider the scaling
\begin{equation}\label{eqscalingB}
u=\frac{U N^{-1/6}}{\sqrt{2}},\quad n=N-\xi N^{1/3},\quad m=N-\zeta N^{1/3}
\end{equation}
as well as \eqref{scaling} for $t_i,r,x_i$.
Then as $N\to\infty$, it holds
\begin{align}
\lim_{N\to\infty}\left(\frac2N\right)^{\frac n2}\frac{N^{-1/6}}{\sqrt2}e^{N+\frac{RN^{1/3}}2}\Phi_\tau^n(u)&=\wh\Phi_T^\xi(U),\label{phiasymp}\\
\lim_{N\to\infty}\left(\frac N2\right)^{\frac m2}N^{1/3}e^{-N-\frac{RN^{1/3}}2}\Psi_\tau^m(u)&=\wh\Psi_T^\zeta(U).\label{psiasymp}
\end{align}
For the rescaled kernel the convergence
\begin{equation}\label{K0conv}
\lim_{N\to\infty}\left(\frac2N\right)^{\frac{n-m}2}N^{1/3}K_0(n,m)=\wh K_0(\xi,\zeta)
\end{equation}
holds.

Furthermore, for $U,V$ in a compact interval and for any $c>0$, there is a $C=C(c)$ such that the bounds
\begin{align}
\left|\left(\frac2N\right)^{\frac n2}\frac{N^{-1/6}}{\sqrt2}e^{N+\frac{RN^{1/3}}2}\Phi_\tau^n(u)\right|&\le Ce^{-c\xi},\label{Phibound}\\
\left|\left(\frac N2\right)^{\frac m2}N^{1/3}e^{-N-\frac{RN^{1/3}}2}\Psi_\tau^m(u)\right|&\le Ce^{-c\zeta},\label{Psibound}\\
\left|\left(\frac2N\right)^{\frac{n-m}2}N^{1/3}K_0(n,m)\right|&\le Ce^{-c(\xi+\zeta)}\label{K0bound}
\end{align}
hold for $\xi$ and $\zeta$ uniformly in $[0,N^{2/3}]$.
\end{proposition}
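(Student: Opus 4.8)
The plan is to reduce everything to the asymptotics of the harmonic oscillator functions collected in Lemma~\ref{lemma:phiasymp}, using the Hermite-polynomial representations of $\Phi_\tau^n$ and $\Psi_\tau^m$ from Proposition~\ref{prop:Hermiterepr}. The key observation is that in each of these representations the Hermite polynomial appears with argument $\frac{(1+4\tau)r}{2\sqrt{2\tau}}\pm\frac{u}{2\sqrt\tau}$, and under the scaling~\eqref{scaling}, \eqref{eqscalingB} one has $r\sim\sqrt N$, $\tau\to 1/4$, and $u\sim N^{-1/6}$, so this argument is of the form $\sqrt{2n'}+s n'^{-1/6}/\sqrt2$ for a suitable effective index $n'$ and a bounded scaling variable $s$. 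First I would carry out this change of variables carefully: write $r=\sqrt N+\tfrac12 RN^{-1/6}$, $\tau=\tau(T)=\tfrac14\frac{t}{1-t}$ with $t=\tfrac12(1+TN^{-1/3})$, expand $\tau=\tfrac14+\tfrac14 TN^{-1/3}+O(N^{-2/3})$, and track how the Gaussian prefactor $\exp\big(-(\tfrac{(1+4\tau)r}{2\sqrt{2\tau}}\pm\tfrac u{2\sqrt\tau})^2\big)$ times $\varphi$-type normalization combines with the explicit exponential $e^{\pm(2\tau r^2+\sqrt2 r u)}$ in~\eqref{phirepr}. The powers of $N$ and the constants $(2/N)^{n/2}$, $e^{N+RN^{1/3}/2}$ in the prefactors of~\eqref{phiasymp}--\eqref{psiasymp} are precisely designed to absorb the Stirling-type normalization $(2\sqrt\tau)^{-(n+1)}$ (resp.\ $(2\sqrt\tau)^m/m!$) together with the leading exponential growth; one verifies this by a bookkeeping computation using Stirling's formula, exactly as in the computation of $\alpha_n(0,s)$ in the proof of Lemma~\ref{lemma:phiasymp}.

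Once the algebra is set up, the limits~\eqref{phiasymp} and~\eqref{psiasymp} follow from~\eqref{phiconv}: the $+u$ term converges to $\Ai^{(\pm T)}(R+\xi+U)$ and the $-u$ term to $\Ai^{(\pm T)}(R+\xi-U)$ (the Airy-shift functions $\Ai^{(s)}$ arise because the $N^{-1/3}$ correction of $\tau$ produces exactly the $e^{2s^3/3+xs}\Ai(s^2+x)$ combination after collecting the sub-leading exponential terms — this matches the definition right before the theorem). The subtraction $(u\leftrightarrow -u)$ in Proposition~\ref{prop:Hermiterepr} becomes the subtraction in the definition~\eqref{defPhihat} of $\wh\Phi_T^\xi$, $\wh\Psi_T^\zeta$. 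For~\eqref{K0conv}, I would start from the contour-integral definition~\eqref{defK0} of $K_0(n,m)$, rescale $Z$ by the appropriate power of $N$, and perform a saddle-point analysis; alternatively, and more cleanly, use the compatibility relation~\eqref{PhiPsi}, namely $(\id-K_0)(n,m)=\int_{\R_-}\Phi_\tau^n(u)\Psi_\tau^m(u)\,\d u$ evaluated at, say, $\tau=1/4$, and pass to the limit inside the integral using~\eqref{phiasymp}, \eqref{psiasymp} together with the dominating bounds~\eqref{Phibound}--\eqref{Psibound}; this yields $(\id-\wh K_0)(\xi,\zeta)=\int_{\R_+}\wh\Phi_0^\xi(U)\wh\Psi_0^\zeta(U)\,\d U$, and the Gaussian integral on the right is computed explicitly to give $\id(\xi,\zeta)-2^{-1/3}\Ai(2^{-1/3}(2R+\xi+\zeta))$ via the standard identity $\int_\R\Ai(a+x)\Ai(b+x)\,\d x=\delta(a-b)$ restricted by the reflection, matching~\eqref{defK0hat}.

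For the bounds~\eqref{Phibound}--\eqref{Psibound} I would split the ranges: on $\xi\in[0,s_0 N^{0}]$ (bounded $\xi$) the convergence~\eqref{phiconv} plus continuity gives a uniform bound, while for $\xi$ large one invokes the exponential decay estimate~\eqref{phibound}, which gives $e^{-c'\xi}$ for the leading term; the subtracted term with $-u$ has argument $\frac{(1+4\tau)r}{2\sqrt{2\tau}}-\frac u{2\sqrt\tau}$, which for $U$ in a compact set is still of order $\sqrt{2n}+sn^{-1/6}/\sqrt2$ with $s$ bounded below, so the same bound applies. The uniform bound~\eqref{unifphibound} handles the intermediate regime and the region where the effective index leaves the Airy scaling window $n'\in[0,N]$ — here one must be a little careful because as $\xi$ ranges over $[0,N^{2/3}]$ the effective index $n'=N-\xi N^{1/3}$ ranges down toward $0$, and one needs the polynomial-in-$N$ prefactors not to blow up; the cleanest fix is to check that outside a window $\xi\le \varepsilon N^{2/3}$ the prefactor-times-function is super-exponentially small directly from the integral representation (Gaussian decay of $\Phi$ in $u$ combined with the growth of the prefactor is favorable), so that the claimed $Ce^{-c\xi}$ holds trivially there. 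For~\eqref{K0bound} I would use the contour representation~\eqref{defK0} with a steepest-descent contour, or bound it via $|(\id-K_0)(n,m)|\le\int|\Phi^n||\Psi^m|$ and the already-established~\eqref{Phibound}--\eqref{Psibound}. The main obstacle I anticipate is precisely this last point: controlling the bounds uniformly over the full range $\xi,\zeta\in[0,N^{2/3}]$ rather than just on compacts, i.e.\ making sure the Airy-regime estimates of Lemma~\ref{lemma:phiasymp} (which are stated for the edge scaling $\sqrt{2n}+sn^{-1/6}/\sqrt2$) are patched correctly to the bulk/exterior regimes and that no spurious $N$-dependence survives in the constants; everything else is a careful but routine change of variables and Stirling bookkeeping.
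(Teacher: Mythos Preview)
Your plan for \eqref{phiasymp} and \eqref{psiasymp} is essentially the paper's argument: rewrite \eqref{phirepr}--\eqref{psirepr} via \eqref{defphi} in terms of $\varphi_n$, do the Stirling/scaling bookkeeping, and appeal to Lemma~\ref{lemma:phiasymp}. The paper carries this out exactly (equations~\eqref{phiwithphi}--\eqref{Psicapitals}), and your description of how $\Ai^{(\pm T)}$ arises is correct.

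The gap is in your treatment of $K_0$. Your preferred route (b), going through the compatibility relation $(\id-K_0)(n,m)=\int_{\R_-}\Phi_\tau^n\Psi_\tau^m$, does not give pointwise convergence of $K_0$: after rescaling, the identity part $N^{1/3}\id(n,m)$ is $N^{1/3}$ on the diagonal and does not converge. Correspondingly, on the limit side $\int_{\R_-}\wh\Phi_0^\xi(U)\wh\Psi_0^\zeta(U)\,\d U$ diverges when $\xi=\zeta$ (the Airy function is not in $L^2(\R)$), so your claimed identity with $\id(\xi,\zeta)$ holds only distributionally, not pointwise. The same problem kills your bound argument for \eqref{K0bound}: $|(\id-K_0)(n,n)|$ is close to $1$, so $\int|\Phi^n||\Psi^m|$ cannot be $\le Ce^{-c(\xi+\zeta)}$ uniformly. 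Your fallback (a), a direct saddle-point on \eqref{defK0}, would work but is a separate analysis you have not sketched.

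The paper avoids this by using \eqref{fug-u} from Lemma~\ref{lemma:fgphi}, which expresses $K_0(n,m)$ itself (not $\id-K_0$) as a \emph{full-line} integral $\int_\R$ of a product of two single (non-antisymmetrized) pieces. After rescaling this becomes $\int_\R e^{2TU}\varphi_{n}(\cdots+U)\varphi_{m}(\cdots-U)\,\d U$, to which the closed-form Airy identity $\int_\R e^{t\lambda}\Ai(s_2+\lambda)\Ai(s_1-\lambda)\,\d\lambda=2^{-1/3}e^{(s_1-s_2)t/2}\Ai(2^{-1/3}(s_1+s_2-t^2/2))$ applies directly, yielding \eqref{K0conv} and, via the exponential tail bound \eqref{phibound} on each factor, the uniform estimate \eqref{K0bound}. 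If you want to keep your overall strategy, the minimal fix is to replace the $(\id-K_0)$ step by this direct representation of $K_0$.
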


Theorem~\ref{thm:asymptotics} is now an easy consequence of Proposition~\ref{prop:asymptotics}.

\begin{proof}[Proof of Theorem~\ref{thm:asymptotics}]
It is enough to prove \eqref{Kextconv} in terms of the variables $\tau_i,u_i$, that is,
\begin{equation}
\lim_{N\to\infty}\frac{N^{-1/6}}{\sqrt2}K^\ext(\tau_1,u_1;\tau_2,u_2)=\wh K^\ext(T_1,U_1;T_2,U_2).
\end{equation}
First of all, \eqref{scaling}, \eqref{eqscalingB} and Brownian scaling give
\begin{equation}
\frac{N^{-1/6}}{\sqrt2}T_{\tau_1,\tau_2}(u_1,u_2)=T_{T_1,T_2}(U_1,U_2).
\end{equation}
Further, by the uniform decay properties \eqref{Phibound}--\eqref{K0bound} in $\xi$ and $\zeta$, in the sum for $n$ and $m$ in \eqref{defKexttauu}, dominated convergence can be used.
We thus replace the rescaled functions $\Psi_{\tau_1}^n(u_1)$, $\Phi_{\tau_2}^m(u_2)$ and the rescaled resolvent of the kernel $K_0$ in \eqref{defKexttauu} according to \eqref{phiasymp}--\eqref{K0conv}.
The conjugations and prefactors exactly cancels.
We turn the Riemann sum into an integral and by dominated convergence, we obtain \eqref{Kextconv}.
\end{proof}

\begin{proof}[Proof of Theorem~\ref{thm:asymptoticsB}]
First note that the left-hand side of \eqref{eq2.38} before taking the $N\to\infty$ limit is equal to the left-hand side of \eqref{NBBbelowhCond} for
\begin{equation}\label{htscaling}
h(t)=\sqrt N+\frac{H(T) N^{-1/6}}2,\qquad t=\frac{1+T N^{-1/3}}2.
\end{equation}
Thus we can apply Theorem~\ref{thm:DistrH}.
Under the same scaling as in the proof of Theorem~\ref{thm:asymptotics}, in particular with $u_i=\frac{U_i N^{-1/6}}{\sqrt{2}}$, Proposition~\ref{prop:asymptotics} yields
\begin{equation}
\frac{N^{-1/6}}{\sqrt2}K_{\tau_1,\tau_2}(u_1,u_2)\to\wh K_{T_1,T_2}(U_1,U_2).
\end{equation}
In order to compute the limit of $T^h$, observe that with the scaling \eqref{htscaling} and $r=\sqrt N+\frac12RN^{-1/6}$ and by \eqref{deftauhtilde},
\begin{equation}\label{tauhscaling}
\tau=\tau(T)=\frac14\frac{1+TN^{-1/3}}{1-TN^{-1/3}},\qquad\wt h(\tau)=\frac{N^{-1/6}}{\sqrt2}(H(T)-R)(1+\O(N^{-1/3})).
\end{equation}
Further, inserting \eqref{tauhscaling} and $u_i=\frac{U_i N^{-1/6}}{\sqrt{2}}$ into $T^h$ given by \eqref{defT1}, we obtain
\begin{equation}\begin{aligned}
&\lim_{N\to\infty}\frac{N^{-1/6}}{\sqrt{2}}T^h_{\tau_1,\tau_2}(u_1,u_2)\\
&\qquad=\lim_{N\to\infty}\frac{N^{-1/6}}{\sqrt{2}}\frac{\d}{\d u_2} \P_{\wt b(\tau_1)=u_1}
\left(\wt b(\tau(T))\le \wt h(\tau(T))\mbox{ for }T\in[T_1,T_2],\wt b(\tau_2)\leq u_2\right)\\
&\qquad=\frac{\d}{\d U_2} \P_{B(T_1)=U_1}\left(B(T)\le(H(T)-R)\mbox{ for }T\in[T_1,T_2],B(T_2)\leq U_2\right)\\
&\qquad=\wh T_{T_1,T_2}^{H-R}(U_1,U_2)
\end{aligned}\end{equation}
where we used the Brownian scaling by writing the probability in terms of the Brownian motion $B(T)=\frac1{\sqrt2}N^{-1/6}\wt b(\frac14+\frac12TN^{-1/3})$ in the second equality.
The convergence of the Fredholm determinant follows from the bounds of Proposition~\ref{prop:asymptotics} in the same way as for the convergence of the Fredholm determinant of Theorem~\ref{thm:asymptotics}.
\end{proof}

\begin{proof}[Proof of Proposition~\ref{prop:asymptotics}]
In the representation of $\Phi$ in terms of Hermite polynomials \eqref{phirepr}, we use \eqref{defphi}.
Then we get
\begin{multline}\label{phiwithphi}
\Phi_\tau^n(u)=\frac{\sqrt{n!}}{2^{\frac{3n}2+1}\tau^{\frac{n+1}2}\pi^{\frac14}}
e^{-\frac12\left(\frac{(1+4\tau)r}{2\sqrt{2\tau}}+\frac u{2\sqrt\tau}\right)^2}\varphi_n\Big(\frac{(1+4\tau)r}{2\sqrt{2\tau}}+\frac u{2\sqrt\tau}\Big)e^{2\tau r^2+\sqrt2ru}-(u\leftrightarrow -u).
\end{multline}
Using the scaling of the variables \eqref{scaling}, \eqref{eqscalingB}, as well as \eqref{deftauhtilde}, one has
\begin{equation}\label{usefulasymp}\begin{aligned}
\frac{(1+4\tau)r}{2\sqrt{2\tau}}\pm\frac u{2\sqrt\tau}&=\sqrt{2N}+\frac{(T^2+R\pm U)N^{-1/6}}{\sqrt2}\mp\frac{TUN^{-1/2}}{\sqrt2}+o(N^{-1/2}),\\
2\tau r^2\pm\sqrt2ru&=\frac N2+TN^{2/3}+\left(T^2+\frac R2\pm U\right)N^{1/3}+RT+T^3+o(1).
\end{aligned}\end{equation}
Further, by the scaling \eqref{scaling}, \eqref{eqscalingB} and \eqref{deftauhtilde},
\begin{equation}\label{4tau}
(4\tau)^{\frac{n+1}2}=e^{TN^{2/3}+T^3/3-T\xi+o(1)}.
\end{equation}
Finally, Stirling's formula leads to
\begin{equation}\label{stirling}
\sqrt{n!}=N^{\frac n2}(1-\xi N^{-2/3})^{\frac{N-\xi N^{1/3}}2}e^{-\frac N2-\frac{\xi N^{1/3}}2+o(1)}(2\pi N)^{\frac14}=(2\pi)^{\frac14}N^{\frac n2+\frac14}e^{-\frac N2+o(1)}.
\end{equation}
Plugging \eqref{usefulasymp}--\eqref{stirling} into \eqref{phiwithphi}, one has
\begin{multline}\label{Phicapitals}
\Phi_\tau^n(u)=\left(\frac N2\right)^{\frac n2}\sqrt2N^{1/6}e^{-N-\frac{RN^{1/3}}2+o(1)}e^{\frac23T^3+(R+\xi+U)T}\\
\times 2^{-1/4}N^{1/12}\varphi_{N-\xi N^{1/3}}\left(\sqrt{2N}+\frac{(T^2+R+U)N^{-1/6}}{\sqrt2}\right)-(U\leftrightarrow -U).
\end{multline}
On the other hand, by \eqref{phiconv} from Lemma~\ref{lemma:phiasymp}, for any $\xi>0$ fixed,
\begin{equation}\label{phimodifiedindex}
\lim_{N\to\infty}2^{-1/4}N^{1/12}\varphi_{N-\xi N^{1/3}}\left(\sqrt{2N}+\frac{sN^{-1/6}}{\sqrt2}\right)=\Ai(s+\xi).
\end{equation}
Using the notation \eqref{defPhihat}, this proves \eqref{phiasymp}.

The proof of \eqref{psiasymp} is similar.
Using \eqref{defphi} in \eqref{psirepr} gives
\begin{multline}\label{psiwithphi}
\Psi_\tau^m(u)=\frac{2^{\frac{3m}2}\tau^\frac m2\pi^{\frac14}}{\sqrt{m!}}
e^{\frac12\left(\frac{(1+4\tau)r}{2\sqrt{2\tau}}+\frac u{2\sqrt\tau}\right)^2}\varphi_m\Big(\frac{(1+4\tau)r}{2\sqrt{2\tau}}+\frac u{2\sqrt\tau}\Big)e^{-2\tau r^2-\sqrt2ru}-(u\leftrightarrow -u).
\end{multline}
Substituting \eqref{usefulasymp}--\eqref{stirling} with $n$ replaced by $m$, one has
\begin{multline}\label{Psicapitals}
\Psi_\tau^m(u)=\left(\frac2N\right)^{\frac m2}N^{-1/3}e^{N+\frac{RN^{1/3}}2+o(1)}e^{-\frac23T^3-(R+\zeta+U)T}\\
\times 2^{-1/4}N^{1/12}\varphi_{N-\zeta N^{1/3}}\left(\sqrt{2N}+\frac{(T^2+R+U)N^{-1/6}}{\sqrt2}\right)-(U\leftrightarrow -U)
\end{multline}
which proves \eqref{psiasymp} by using \eqref{defPhihat}.

To show \eqref{K0conv}, one uses \eqref{fug-u}.
By comparing the definitions \eqref{defPhi}--\eqref{defPsi} with \eqref{Phicapitals} and \eqref{Psicapitals}, one can write the kernel as
\begin{equation}\label{K0withphis}\begin{aligned}
K_0(n,m)&=\left(\frac N2\right)^{\frac{n-m}2}2^{-1/2}N^{-1/6}e^{T(\xi-\zeta)+o(1)}\\
&\qquad\times\int_\R\d U\,e^{2TU}\varphi_{N-\xi N^{1/3}}\Big(\sqrt{2N}+\frac{(T^2+R+U)N^{-1/6}}{\sqrt2}\Big)\\
&\qquad\qquad\times\varphi_{N-\zeta N^{1/3}}\Big(\sqrt{2N}+\frac{(T^2+R-U)N^{-1/6}}{\sqrt2}\Big)
\end{aligned}\end{equation}
where we made the change of variables $u=UN^{-1/6}/\sqrt2$.
For any $c>0$, there is a uniform constant $C=C(c)$ such that
\begin{equation}\label{phixibound}
\left|2^{-1/4}N^{1/12}\varphi_{N-\xi N^{1/3}}\Big(\sqrt{2N}+\frac{sN^{-1/6}}{\sqrt2}\Big)\right|\le Ce^{-c(\xi+s)}
\end{equation}
for $s>0$, because of \eqref{phibound} with $n=N-\xi N^{1/3}$.

If $n$ does not grow to infinity with $N\to\infty$, then by definition \eqref{defphi} with \mbox{$x=\sqrt{2N}+sN^{-1/6}/\sqrt2$},
the harmonic oscillator function on the left-hand side of \eqref{phixibound} is of order $e^{-N}$ which is even smaller than the right-hand side.
Using \eqref{unifphibound}, the left-hand side of \eqref{phixibound} is at most a uniform constant for $s\le0$.
Hence we can use dominated convergence in \eqref{K0withphis} and conclude that
\begin{equation}\begin{aligned}
\left(\frac2N\right)^{\frac{n-m}2}N^{1/3}K_0(n,m)
&\to e^{T(\xi-\zeta)}\int_\R\d U\,e^{2TU}\Ai(T^2+R+\xi+U)\Ai(T^2+R+\zeta-U)\\
&=2^{-1/3}\Ai(2^{-1/3}(2R+\xi+\zeta)).
\end{aligned}\end{equation}
In the last step we use the following identity: for any $s_1,s_2,t\in\R$,
\begin{equation}
\int_\R\d\lambda\,e^{t\lambda}\Ai(s_2+\lambda)\Ai(s_1-\lambda)=2^{-1/3}e^{\frac12(s_1-s_2)t}\Ai\left(2^{-1/3}\left(s_1+s_2-\frac{t^2}2\right)\right),
\end{equation}
which follows from (A.5)--(A.6) of~\cite{BFS07} using the notation (A.1) in~\cite{BFS07}.
This proves \eqref{K0conv}.

Using the uniformity of the bound \eqref{phixibound} in $\xi$, the exponential bounds in $\xi$ and $\zeta$
which can be given for \eqref{Phicapitals}, \eqref{Psicapitals} and for \eqref{K0withphis} yield \eqref{Phibound}--\eqref{K0bound}.
This completes the proof.
\end{proof}

\section{Proof of lemmas}\label{s:lemmas}

In this section, we give the proofs of all those propositions and lemmas which were found to be technical to give immediately.
For the proof of Proposition~\ref{prop:compatibility} we will use the following lemma.
\begin{lemma}\label{lemma:fgphi}
With the notation \eqref{deffg}, for any $u,v\in\R$, one has
\begin{align}
\int_\R\d u\,f_W(u)\phi_{2(\tau_2-\tau_1)}(v-u)&=e^{(\tau_2-\tau_1)(\sqrt2r-2W)^2}f_W(v),\label{fphi}\\
\int_\R\d v\,\phi_{2(\tau_2-\tau_1)}(v-u)g_Z(v)&=e^{(\tau_2-\tau_1)(\sqrt2r-2Z)^2}g_Z(u).\label{phig}
\end{align}
Further, for any $\tau>0$ and integers $n$ and $m$,
\begin{align}
\frac2{(2\pi\I)^2}\int_{\I\R}\d W\oint_{\Gamma_0}\d Z\int_\R\d u\,\frac{W^ne^{\tau\left(\sqrt2r-2W\right)^2-\sqrt2rW}}{Z^{m+1}e^{\tau\left(\sqrt2r-2Z\right)^2-\sqrt2rZ}}f_W(u)g_Z(u)
&=\id(n,m),\label{fugu}\\
\frac2{(2\pi\I)^2}\int_{\I\R}\d W\oint_{\Gamma_0}\d Z\int_\R\d u\,\frac{W^ne^{\tau\left(\sqrt2r-2W\right)^2-\sqrt2rW}}{Z^{m+1}e^{\tau\left(\sqrt2r-2Z\right)^2-\sqrt2rZ}}f_W(u)g_Z(-u)
&=K_0(n,m).\label{fug-u}
\end{align}
\end{lemma}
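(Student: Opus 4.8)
The plan is to handle \eqref{fphi}--\eqref{phig} by an elementary Gaussian computation, and \eqref{fugu}--\eqref{fug-u} by evaluating the $W$-contour integral first (which turns the expression into a Gaussian in $u$), then the $u$-integral, then the $Z$-contour integral.

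For \eqref{fphi} write $c=\sqrt2r-2W$ and $t=2(\tau_2-\tau_1)$; the substitution $u=v-s$ gives
\[
\int_\R f_W(u)\phi_t(v-u)\,\d u=e^{cv}\int_\R e^{-cs}\tfrac1{\sqrt{2\pi t}}e^{-s^2/(2t)}\,\d s=e^{cv+c^2t/2},
\]
and $c^2t/2=(\tau_2-\tau_1)(\sqrt2r-2W)^2$ is exactly the stated prefactor; the Gaussian integral converges for every complex $c$, so no restriction on $W$ is needed. Identity \eqref{phig} is the same computation with $c=-(\sqrt2r-2Z)$ and $v=u+s$.

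For \eqref{fugu} and \eqref{fug-u} I would first carry out the $W$-integral. Rescaling $2\sqrt\tau W=w$ and using the Hermite integral representation \eqref{Hermitereprw} (exactly as in the proof of Proposition~\ref{prop:Hermiterepr}) gives
\[
\frac1{\pi\I}\int_{\I\R}\d W\,W^ne^{\tau(\sqrt2r-2W)^2-\sqrt2rW}f_W(u)=\frac{e^{2\tau r^2+\sqrt2ru}}{2^{2n+1}\tau^{(n+1)/2}\sqrt\pi}\,e^{-x^2}H_n(x),\qquad x=\frac{(1+4\tau)r}{2\sqrt{2\tau}}+\frac u{2\sqrt\tau},
\]
i.e.\ the first term on the right-hand side of \eqref{phirepr}; in particular it is a Gaussian in $u$, so the remaining $u$-integral against $g_Z(\pm u)$ is now absolutely convergent. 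Performing it by the change of variables $u\mapsto x$ (so $\d u=2\sqrt\tau\,\d x$ and $x$ runs over $\R$) together with the elementary identity $\int_\R e^{-x^2+2\beta x}H_n(x)\,\d x=\sqrt\pi\,(2\beta)^ne^{\beta^2}$ — with $\beta=2\sqrt\tau Z$ in case \eqref{fugu} and $\beta=2\sqrt\tau(\sqrt2r-Z)$ in case \eqref{fug-u} — the various $2$-, $\tau$- and $\pi$-powers collapse to $1$, and one is left with $\frac1{2\pi\I}\oint_{\Gamma_0}\d Z$ of $Z^{n-m-1}e^{\mathcal E(Z)}$ in the first case and of $Z^{-(m+1)}(\sqrt2r-Z)^ne^{\mathcal E(Z)}$ in the second. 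A short expansion of $\mathcal E(Z)$ shows that in case \eqref{fugu} it vanishes identically, so the residue gives $\delta_{n,m}=\id(n,m)$; in case \eqref{fug-u} the $Z^2$-terms cancel, the $Z^1$-terms sum to $2\sqrt2rZ$ and the constant term equals $-2r^2$, so the integral is exactly $K_0(n,m)$ as defined in \eqref{defK0}.

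The main point to be careful about is that $\int_\R f_W(u)g_Z(\pm u)\,\d u$ is \emph{not} absolutely convergent as written (the integrand is a pure exponential in $u$), so \eqref{fugu}--\eqref{fug-u} must be read with the $W$-contour integral evaluated first; this is precisely the order in which the lemma is applied in \eqref{phiRphiS}--\eqref{intid}, and once the $W$-integral has produced the Gaussian factor $e^{-u^2/(4\tau)}$ all the interchanges of integration used above are legitimate. The only other delicate point is the bookkeeping of the exponential and numerical constants in \eqref{fug-u}, in particular the cancellation of the $Z^2$-terms in $\mathcal E(Z)$, which is routine but needs to be carried out with some care.
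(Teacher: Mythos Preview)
Your proof is correct but proceeds by a genuinely different route from the paper. For \eqref{fphi}--\eqref{phig} both you and the paper simply invoke the Gaussian integral, so there is no difference there. For \eqref{fugu}--\eqref{fug-u}, however, the paper never performs the $W$-integral explicitly: it splits $\int_\R\d u$ into $\int_{\R_-}$ and $\int_{\R_+}$, deforms the $W$-contour to $-1+\I\R$ on the first piece and to $1+\I\R$ on the second so that $\int e^{2(Z-W)u}\,\d u$ converges on each half, and then recognises the difference of the two pieces as the residue of the integrand at $W=Z$ (respectively $W=\sqrt2r-Z$). This residue viewpoint makes the cancellation of the exponential factors in \eqref{fugu} and the emergence of the $K_0$ exponent $-2r^2+2\sqrt2rZ$ in \eqref{fug-u} completely transparent, with essentially no bookkeeping.

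Your approach instead uses the Hermite representation \eqref{Hermitereprw} to evaluate the $W$-integral first, producing a Gaussian in $u$, and then applies the classical identity $\int_\R e^{-x^2+2\beta x}H_n(x)\,\d x=\sqrt\pi\,(2\beta)^ne^{\beta^2}$. This is more computational---one has to track the constants and then expand $\mathcal E(Z)$---but it is entirely elementary: no contour deformation is needed, and every integral is absolutely convergent once the Gaussian factor $e^{-u^2/(4\tau)}$ has appeared. Your remark about the order of integration is well taken; the paper resolves the same issue by the contour shifts rather than by reordering. Both arguments are short and either would be acceptable here.
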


\begin{proof}[Proof of Proposition~\ref{prop:compatibility}]
We substitute the definition \eqref{defT2} of $T_{\tau_1,\tau_2}$ and by combining terms after the change of variables $u\to-u$, one gets
\begin{equation}\begin{aligned}
&\int_{\R_-}\d u\,(f_W(u)-f_W(-u))T_{\tau_1,\tau_2}(u,v)\\
&\qquad=\int_\R\d u\,f_W(u)\phi_{2(\tau_2-\tau_1)}(v-u)-\int_\R\d u\,f_W(u)\phi_{2(\tau_2-\tau_1)}(v+u)\\
&\qquad=e^{(\tau_2-\tau_1)(\sqrt2r-2W)^2}(f_W(v)-f_W(-v))
\end{aligned}\end{equation}
where \eqref{fphi} was used in the second equality.
This proves \eqref{PhiT}.
The proof of \eqref{TPsi} is similar.
The identity \eqref{PhiPsi} immediately follows from \eqref{fugu}--\eqref{fug-u} after the combination of the terms which appear in \eqref{defPhi}--\eqref{defPsi} and by the change of variables $u\to-u$.
\end{proof}

\begin{proof}[Proof of Lemma~\ref{lemma:fgphi}]
The identities \eqref{fphi} and \eqref{phig} are Gaussian integrals which are straightforward to compute.

To show \eqref{fugu}, one separates the integral with respect to $u$ restricted to $\R_-$ and to $\R_+$.
We can suppose that $\Gamma_0$ is so small that $\Re(Z)\in(-1,1)$ along $Z\in\Gamma_0$.
Then in the integral on $\R_-$, one can deform the $W$ contour to $-1+\I\R$, and with this, the integral with respect to $u$ can be computed as
\begin{equation}
\int_{\R_-}\d u\,f_W(u)g_Z(u)=\int_{\R_-}\d u\,e^{2(Z-W)u}=\frac1{2(Z-W)}
\end{equation}
since $\Re(Z-W)>0$ for any $Z\in\Gamma_0$ and $W\in-1+\I\R$.
Similarly on $\R_+$, one deforms the $W$ contour to $1+\I\R$, and then
\begin{equation}
\int_{\R_+}\d u\,f_W(u)g_Z(u)=\int_{\R_+}\d u\,e^{2(Z-W)u}=-\frac1{2(Z-W)}
\end{equation}
since $\Re(Z-W)<0$ in this case.
By joining the two integration contours for $W$ and by Cauchy's theorem, one gets
\begin{equation}\begin{aligned}
&\frac2{(2\pi\I)^2}\int_\R\d u\int_{\I\R}\d W\oint_{\Gamma_0}\d Z\,\frac{W^ne^{\tau\left(\sqrt2r-2W\right)^2-\sqrt2rW}}{Z^{m+1}e^{\tau\left(\sqrt2r-2Z\right)^2-\sqrt2rZ}}f_W(u)g_Z(u)\\
&\qquad=\frac1{2\pi\I}\oint_{\Gamma_0}\d Z\,\Res\left(\frac{W^ne^{\tau\left(\sqrt2r-2W\right)^2-\sqrt2rW}}{Z^{m+1}e^{\tau\left(\sqrt2r-2Z\right)^2-\sqrt2rZ}};W=Z\right)\\
&\qquad=\frac1{2\pi\I}\oint_{\Gamma_0}\d Z\,Z^{n-m-1}=\id(n,m)
\end{aligned}\end{equation}
which shows \eqref{fugu}.

In the same way, \eqref{fug-u} follows by
\begin{equation}\begin{aligned}
&\frac2{(2\pi\I)^2}\int_\R\d u\int_{\I\R}\d W\oint_{\Gamma_0}\d Z\,\frac{W^ne^{\tau\left(\sqrt2r-2W\right)^2-\sqrt2rW}}{Z^{m+1}e^{\tau\left(\sqrt2r-2Z\right)^2-\sqrt2rZ}}f_W(u)g_Z(-u)\\
&\qquad=\frac1{2\pi\I}\oint_{\Gamma_0}\d Z\,\Res\left(\frac{W^ne^{\tau\left(\sqrt2r-2W\right)^2-\sqrt2rW}}{Z^{m+1}e^{\tau\left(\sqrt2r-2Z\right)^2-\sqrt2rZ}};W=\sqrt2r-Z\right)\\
&\qquad=K_0(n,m).
\end{aligned}\end{equation}
This completes the proof of the lemma.
\end{proof}

\begin{proof}[Proof of Proposition~\ref{prop:derivative}]
First observe that due to definition \eqref{defK0hat},
\begin{equation}\label{K0derivative}
\frac\partial{\partial R}\wh K_0(\xi,\zeta)=\left(\frac\partial{\partial\xi}+\frac\partial{\partial\zeta}\right)\wh K_0(\xi,\zeta).
\end{equation}
By writing the resolvent of $\wh K_0$ as a Neumann series and by applying \eqref{K0derivative} to each term of the series, one obtains
\begin{equation}\label{resolventderivative}
\frac\partial{\partial R}(\id-\wh K_0)^{-1}(\xi,\zeta)=\left(\frac\partial{\partial\xi}+\frac\partial{\partial\zeta}\right)(\id-\wh K_0)^{-1}(\xi,\zeta)
-(\id-\wh K_0)^{-1}\wh K_0(\xi,0)(\id-\wh K_0)^{-1}\wh K_0(0,\zeta).
\end{equation}
Further, by \eqref{defPhihat},
\begin{equation}\label{phipsiderivative}
\frac\partial{\partial R}\wh\Psi_T^\xi(U)=\frac\partial{\partial\xi}\wh\Psi_T^\xi(U)\qquad\mbox{and}\qquad
\frac\partial{\partial R}\wh\Phi_T^\zeta(U)=\frac\partial{\partial\zeta}\wh\Psi_T^\zeta(U).
\end{equation}
Now one can take the derivative of the kernel $\wh K^\ext$ in \eqref{defKexthat} with respect to $R$.
Using \eqref{resolventderivative} and \eqref{phipsiderivative}, the proposition follows by direct computation.
\end{proof}

\begin{proof}[Proof of Lemma~\ref{lemma:NRextend}]
For a function $h$ of the form \eqref{defhmp}, one can define approximating functions $h_\varepsilon\in H^1([0,1])$ for any small $\varepsilon>0$ such that
as $\varepsilon$ decreases to $0$, the functions increasingly approach $h$.
With other words, $h_\varepsilon(x)\to h(x)$ increasingly as $\varepsilon\to0$ for any $x\in[0,1]$.

Then the events $E_\varepsilon=\{B_N(t)<h_\varepsilon(t)\mbox{ for }t\in[0,1]\}$ increase to $E_0=\{B_N(t)<h(t)\mbox{ for }t\in[0,1]\}$ as $\varepsilon\to0$,
hence $\P(E_\varepsilon)\to\P(E_0)$ by the continuity of the measure.
Similarly, the events $\wt E_\varepsilon=\{\wt b(\tau)\le\wt h_\varepsilon(\tau)\mbox{ for }\tau\in(\tau_1,\tau_2)\}$ which appear in \eqref{defT1} used in the definition \eqref{defKN} of $K_N^h$
increase to $\wt E_0=\{\wt b(\tau)\le\wt h(\tau)\mbox{ for }\tau\in(\tau_1,\tau_2)\}$ as $\varepsilon\to0$,
since the functions $\wt h_\varepsilon$ increase to $\wt h$ pointwise as $\varepsilon\to0$, see \eqref{deftauhtilde}.
Hence $\P(\wt E_\varepsilon)\to\P(\wt E_0)$.

To complete the proof, the convergence of the corresponding Fredholm determinants on the right-hand side of \eqref{NBBbelowh} has to be shown.
From $\P(\wt E_\varepsilon)\to\P(\wt E_0)$, one has the pointwise convergence of the operators in the Fredholm determinant.
On the other hand, by Lemma~\ref{lemma:traceclass}, $K_N^h$ is a trace class operator for any function $h$, i.e.\ the Fredholm determinant series converges absolutely,
hence the corresponding Fredholm determinants on the right-hand side of \eqref{NBBbelowh} converge as $\varepsilon\to0$ by dominated convergence.
\end{proof}

\begin{proof}[Proof of Lemma~\ref{lemma:error}]
This proof follows the lines of the proof of Lemma~2.3 in~\cite{NR15}.
We first rewrite the operator $e^{-2LD}-R_{-L,L}$ as follows.
We substitute \eqref{ABL} into the definitions \eqref{freeev} and \eqref{defR} and we use the identity
\begin{equation}
-\frac{(e^Ly-e^{-L}x)^2}{e^{2L}-e^{-2L}}+\frac{(e^Ly-e^{-L}x-(e^{2L}-e^{-2L})\frac r{\sqrt2})^2}{e^{2L}-e^{-2L}}=\frac{(e^{2L}-e^{-2L})r^2}2-\sqrt2r(e^Ly-e^{-L}x)
\end{equation}
to simplify the exponential factors.
Then one has the decomposition
\begin{equation}\label{opproduct}
e^{-2LD}-R_{-L,L}=\Gamma_1\Gamma_2\Gamma_3
\end{equation}
where
\begin{align}
\Gamma_1(x,u_1)&=e^{-x^2/2+\sqrt2e^{-L}xr+u_1^2/8\tau_1}T_{e^{-2L}/4,\tau_1}\left(e^{-L}x-\frac{(1+e^{-2L})r}{\sqrt2},u_1\right)\id_{u_1\le H_1},\label{defGamma1}\\
\Gamma_2(u_1,u_2)&=e^{-u_1^2/(8\tau_1)+u_2^2/8\tau_k}T_{\tau_1,\tau_k}^{\tau_i,H_i}(u_1,u_2),\\
\Gamma_3(u_2,y)&=\id_{u_2\le H_k}T_{\tau_k,e^{2L}/4}\left(u_2,e^Ly-\frac{(1+e^{2L})r}{\sqrt2}\right)e^{-u_2^2/8\tau_k+y^2/2+L-\sqrt2e^Lyr+(e^{2L}-e^{-2L})r^2/2}.\label{defGamma3}
\end{align}
The extra conjugation by $e^{u_1^2/8\tau_1}$ and by $e^{u_2^2/8\tau_k}$ was introduced because in this way all the operators $\Gamma_1,\Gamma_2,\Gamma_3$ have finite norm as shown below.
Next we decompose the error term as $\Omega_L=\Omega_L^1+\Omega_L^2$ with
\begin{align}
\Omega_L^1&=P_{\sqrt2 r\cosh L}(e^{-2LD}-R_{-L,L})\ol P_{\sqrt2 r\cosh L},\label{defOmega1}\\
\Omega_L^2&=(e^{-2LD}-R_{-L,L})P_{\sqrt2 r\cosh L}.\label{defOmega2}
\end{align}
We bound the trace norm of
\begin{equation}
\wt\Omega_L=e^{LD}K_{\Herm,N}\Omega_L^1e^{LD}K_{\Herm,N}+e^{LD}K_{\Herm,N}\Omega_L^2e^{LD}K_{\Herm,N}
\end{equation}
as follows.
One has by \eqref{opproduct} and \eqref{defOmega1} that
\begin{multline}\label{Omega1decomp}
\|e^{LD}K_{\Herm,N}\Omega_L^1e^{LD}K_{\Herm,N}\|_1\\
\le\|e^{LD}K_{\Herm,N}P_{\sqrt2 r\cosh L}\Gamma_1\|_2\,\,\|\Gamma_2\|_{\op}\,\,\|\Gamma_3\ol P_{\sqrt2 r\cosh L}e^{LD}K_{\Herm,N}\|_2.
\end{multline}
By definition, one can write the square of the first Hilbert--Schmidt norm as
\begin{equation}\label{Gamma1sqnorm}\begin{aligned}
&\|e^{LD}K_{\Herm,N}P_{\sqrt2 r\cosh L}\Gamma_1\|_2^2\\
&\qquad=\sum_{n,m=0}^{N-1}\int_\R\d x\int_{-\infty}^{H_1}\d y\int_{\sqrt2r\cosh L}^\infty\d w\int_{\sqrt2r\cosh L}^\infty\d z\\
&\qquad\qquad\times e^{L(n+m)}\varphi_n(x)\varphi_m(x)\varphi_n(w)\varphi_m(z)\Gamma_1(w,y)\Gamma_1(z,y)\\
&\qquad=\sum_{n=0}^{N-1}e^{2nL}\int_{-\infty}^{H_1}\d y\left(\int_{\sqrt2r\cosh L}^\infty\d z\,\varphi_n(z)\Gamma_1(z,y)\right)^2\\
&\qquad\le Ne^{2(N-1)L}\int_{-\infty}^{H_1}\d y\left(\int_{\sqrt2r\cosh L}^\infty\d z\,\varphi_n(z)^2\right)\left(\int_{\sqrt2r\cosh L}^\infty\d z\,\Gamma_1(z,y)^2\right)\\
&\qquad\le Ne^{2(N-1)L}\int_{\sqrt2r\cosh L}^\infty\d z\int_\R\d y\,\Gamma_1(z,y)^2
\end{aligned}\end{equation}
where we used first that the harmonic oscillator functions $\varphi_n$ are orthonormal, then the Cauchy--Schwarz inequality, and finally the orthonormal property of $\varphi_n$ again.
In the definition of $\Gamma_1$ \eqref{defGamma1} and by comparing it with \eqref{defT2}, one can give the upper bound
\begin{equation}\label{Gamma1int}\begin{aligned}
\int_\R\d y\,\Gamma_1(z,y)^2
&\le e^{-z^2+2\sqrt2e^{-L}zr}\int_\R\d y\,e^{\frac{y^2}{4\tau_1}}\phi_{2\tau_1-e^{-2L}/2}\left(y-e^{-L}z+\frac{1+e^{-2L}}{\sqrt2}r\right)^2\\
&=e^{-z^2+2\sqrt2e^{-L}zr}\int_\R\d y\,\frac{e^{\frac{y^2}{4\tau_1}}}{\pi(4\tau_1-e^{-2L})}\exp\left(-\frac{(y-e^{-L}z+\frac{1+e^{-2L}}{\sqrt2}r)^2}{2\tau_1-e^{-2L}/2}\right)\\
&=e^{-(1+o(1))z^2+o(1)z}\int_\R\d y\frac1{4\pi\tau_1}e^{-(1+o(1))\frac{y^2}{4\tau_1}+(1+o(1))\frac{ry}{\sqrt2\tau_1}-\frac{r^2}{4\tau_1}+o(1)yz+o(1)}\\
&=\frac1{\sqrt{4\pi\tau_1}}e^{-(1+o(1))z^2+\frac{r^2}{4\tau_1}+o(1)z+o(1)}
\end{aligned}\end{equation}
by computing the Gaussian integral in the last step.
The $o(1)$ above means a term which does neither depend on $y$ nor $z$ and which goes to $0$ as $L\to\infty$.
Putting \eqref{Gamma1sqnorm} and \eqref{Gamma1int} together, one obtains
\begin{equation}\label{Gamma1sqnorm2}\begin{aligned}
\|e^{LD}K_{\Herm,N}P_{\sqrt2 r\cosh L}\Gamma_1\|_2^2&\le\frac{Ne^{2(N-1)L+\frac{r^2}{4\tau_1}}}{\sqrt{4\pi\tau_1}}\int_{\sqrt2r\cosh L}^\infty\d z\,e^{-(1+o(1))z^2+o(1)z+o(1)}\\
&\le\frac{Ne^{2NL+\frac{r^2}{4\tau_1}}}{\sqrt{4\pi\tau_1}}e^{-2r^2(\cosh L)^2(1+o(1))}\\
&\le c_1e^{2NL-c_2e^{2L}}
\end{aligned}\end{equation}
with positive constants $c_1$ and $c_2$ for $L$ large enough.
We used the Chernoff bound on the tail of the normal distribution in the second inequality.

Obtaining a bound on $\|\Gamma_3\ol P_{\sqrt2 r\cosh L}e^{LD}K_{\Herm,N}\|_2^2$ is very similar.
There is a difference in the step which corresponds to \eqref{Gamma1int}.
It can be done as follows.
\begin{equation}\label{Gamma3int}\begin{aligned}
\int_\R\d x\,\Gamma_3(x,z)^2&\le e^{z^2+2L-2\sqrt2e^Lzr+(e^{2L}-e^{-2L})r^2}\!\!\int_\R\d x\,e^{-\frac{x^2}{4\tau_k}}\phi_{e^{2L}/2-2\tau_k}\Big(x-e^Lz+\frac{(1+e^{2L})r}{\sqrt2}\Big)^2\\
&=e^{2L-2r^2-(1+o(1))z^2+o(1)z}\int_\R\d x\,\frac1{\sqrt{\pi e^{2L}}}e^{-(1+o(1))\frac{x^2}{4\tau_k}-(1+o(1))2\sqrt2xr+o(1)xz+o(1)}\\
&=\frac1{\pi e^{2L}}e^{2L+(8\tau_k-2)r^2-(1+o(1))z^2+o(1)z+o(1)}
\end{aligned}\end{equation}
where the $o(1)$ term are again independent of $y$ and $z$ and they go to $0$ as $L\to\infty$.
The computation \eqref{Gamma3int} results in a bound
\begin{equation}\label{Gamma3sqnorm}
\|\Gamma_3\ol P_{\sqrt2 r\cosh L}e^{LD}K_{\Herm,N}\|_2\le c_1e^{NL}
\end{equation}
very similarly as in \eqref{Gamma1sqnorm2}.
The factor $e^{-c_2e^{2L}}$ is not present due to the fact that the projection $P_{\sqrt2r\cosh L}$ is replaced by $\ol P_{\sqrt2r\cosh L}$.

Finally, the operator norm of $\Gamma_2$ can be bounded in the following way.
\begin{equation}\label{Gamma2sqnorm}\begin{aligned}
\|\Gamma_2\|_{\op}^2
&\le\sup_{y\in\R}\int_\R\d x\,\Gamma_2(x,y)^2\le\sup_{y\in\R}\int_\R\d x\,e^{-\frac{x^2}{4\tau_1}+\frac{y^2}{4\tau_k}}\phi_{2(\tau_k-\tau_1)}(y-x)^2\\
&=\sup_{y\in\R}\frac12\sqrt{\frac{\tau_1}{\pi(\tau_k^2-\tau_1^2)}}e^{-\frac{(\tau_k-\tau_1)y^2}{4\tau_k(\tau_1+\tau_k)}}=\frac12\sqrt{\frac{\tau_1}{\pi(\tau_k^2-\tau_1^2)}}
\end{aligned}\end{equation}
by straightforward computation involving a Gaussian integral.
Putting \eqref{Omega1decomp}, \eqref{Gamma1sqnorm}, \eqref{Gamma2sqnorm} and \eqref{Gamma3sqnorm} together proves that the error corresponding to $\Omega_L^1$ goes to $0$ as $L\to\infty$.
The proof for $\Omega_L^2$ can be done similarly.
\end{proof}


\end{document}